\numberwithin{equation}{section}
\newcommand{\mb}{\mathbf}
\newcommand{\bs}{\boldsymbol}
\newcommand{\R}{\mathbb{R}}
\newcommand{\x}{\mathbf{x}}
\newcommand{\ba}{\begin{aligned}}
\newcommand{\ea}{\end{aligned}}
\newcommand{\bt}{\begin{thm}}
\newcommand{\et}{\end{thm}}
\newcommand{\bc}{\begin{corollary}}
\newcommand{\ec}{\end{corollary}}
\newcommand{\bl}{\begin{lemma}}
\newcommand{\el}{\end{lemma}}
\newcommand{\bpf}{\begin{proof}}
\newcommand{\epf}{\end{proof}}
\newcommand{\bpb}{\begin{problem}}
\newcommand{\epb}{\end{problem}}
\newcommand{\bd}{\begin{definition}}
\newcommand{\ed}{\end{definition}}
\newcommand{\bn}{\begin{note}}
\newcommand{\en}{\end{note}}
\newcommand{\bq}{\begin{question}}
\newcommand{\eq}{\end{question}}
\newcommand{\bp}{\begin{proposition}}
\newcommand{\ep}{\end{proposition}}
\newcommand{\be}{\begin{example}}
\newcommand{\ee}{\end{example}}
\newcommand{\bex}{\begin{exercise}}
\newcommand{\eex}{\end{exercise}}
\newcommand{\ben}{\begin{enumerate}}
\newcommand{\een}{\end{enumerate}}
\newcommand{\nn}{\nonumber}
\newcommand{\T}{\mathbb{T}}
\theoremstyle{plain}
\newtheorem{theorem}{Theorem}
\numberwithin{theorem}{section}
\newtheorem{maintheorem}[theorem]{Theorem}
\newtheorem{lemma}[theorem]{Lemma}
\newtheorem{corollary}[theorem]{Corollary}
\newtheorem{proposition}[theorem]{Proposition}
\theoremstyle{definition}
\newtheorem{remark}[theorem]{Remark}
\newtheorem{example}[theorem]{Example}
\newtheorem{definition}[theorem]{Definition}
\newtheorem{problem}[theorem]{Problem}
\newcommand{\eref}[1]{(\ref{e.#1})}
\newcommand{\tref}[1]{Theorem \ref{t.#1}}
\newcommand{\lref}[1]{Lemma \ref{l.#1}}
\newcommand{\pref}[1]{Proposition \ref{p.#1}}
\newcommand{\cref}[1]{Corollary \ref{c.#1}}
\newcommand{\fref}[1]{Figure \ref{f.#1}}
\newcommand{\sref}[1]{Section \ref{s.#1}}
\newcommand{\aref}[1]{Assumption \ref{a.#1}}
\newcommand{\dref}[1]{Definition \ref{d.#1}}
\newcommand{\grad}{\nabla}
\author{William M Feldman}
\address{University of Utah, Salt Lake City, USA}
\email{feldman@math.utah.edu}
\author{Inwon C Kim}
\thanks{W. Feldman was partially supported by the NSF grant DMS-2009286}
\address{UCLA, Los Angeles, USA}
\email{ikim@math.ucla.edu}
\thanks{I. Kim was partially supported by the NSF grant DMS-2153254}
\author{Aaron Zeff Palmer}
\address{UCLA, Los Angeles, USA}
\email{azp@math.ucla.edu}
\thanks{A. Palmer was partially supported by the Air Force grant FA9550-18-1-0502}
\title{The sharp interface limit of an Ising Game}
\begin{document}

\begin{abstract}
  The \emph{Ising model} of statistical physics has served as a keystone example of phase transitions, thermodynamic limits, scaling laws, and many other phenomena and mathematical methods.  We introduce and explore an \emph{Ising game}, a variant of the Ising model that features competing agents influencing the behavior of the spins. With long-range interactions, we consider a mean-field limit resulting in a nonlocal potential game at the mesoscopic scale. This game exhibits a phase transition and multiple constant Nash-equilibria in the supercritical regime.  
  
  Our analysis focuses on a sharp interface limit for which potential minimizing solutions to the Ising game concentrate on two of the constant Nash-equilibria.   We show that the mesoscopic problem can be recast as a mixed local/nonlocal space-time Allen-Cahn type minimization problem. We prove, using a $\Gamma$-convergence argument, that the limiting interface minimizes a space-time anisotropic perimeter type energy functional. This macroscopic scale problem could also be viewed as a problem of optimal control of interface motion.      Sharp interface limits of Allen-Cahn type functionals have been well studied.  We build on that literature with new techniques to handle a mixture of local derivative terms and nonlocal interactions.  The boundary conditions imposed by the game theoretic considerations also appear as novel terms and require special treatment.
\end{abstract}

\maketitle
%\tableofcontents

\section{Introduction}

This article develops an \emph{Ising game} as a prototypical example for \emph{spin games} and the phenomenon of phase transitions in mean-field games.   Game theoretic models incorporate rational agent behavior, introducing additional complexity to the particle models of statistical physics.  These models are suited to applications including social dynamics, economics, and neural networks.

We begin with first introducing our framework. To put our work into context, we survey a series of results in the literature, including the passage from the discrete \emph{spin games} to continuous mean-field games.  Our main result, stated in \sref{mainresult-state}, focuses on a mesoscopic to macroscopic scaling limit for the \emph{Ising game}.

 \subsection{Motivation}

In economics, the study of games has been used to form insights into phenomena that arise when the players exhibit free will and decision-making in their choice of actions.  Beyond the original applications to economics and finance \cite{von1947theory}, game theoretic models  have been used in evolutionary biology \cite{Doebeli} and opinion dynamics \cite{ghaderi2013opinion}.   Along with games, one can consider distributed optimization problems -- where many individual agents take actions with a collective objective -- for example arising from the management of a smart energy grid \cite{xu2015robust} or training weights of a neural network \cite{pmlr-v97-tan19a}.

Phase transitions have been proposed to be important phenomena in understanding biological systems \cite{cavagna2010scale}, \cite{mora2011biological}, neural dynamics \cite{hopfield1982neural}, \cite{chialvo2010emergent}, and social behavior \cite{vanni2011criticality}.  Many frameworks exist to model such systems.  In this work, we consider an intersection between the frameworks of dynamic games and spin systems that allows for both concrete calculations and general mathematical analysis.

There are many interesting aspects that arise in phase transitions, including mesoscopic and macroscopic scaling limits and interface dynamics \cite{de1994glauber}, fluctuations in the mesoscopic limit and universality classes \cite{Masi_1996}, and spontaneous phase separation \cite{de1996glauber}.  All of these aspects have been studied at length for many different particle models.  In this work, we first briefly review the mesoscopic limit, which has been studied extensively in the context of mean-field games. We then focus our technical analysis on the macroscopic limit and interface dynamics, where we find novel features that require new techniques.

\subsection{Spin games}

Spin systems arise in the analysis of the magnetization of solid-state materials where the spin represents the magnetic moment of a particle.  Another common application of spin systems is that of the grand canonical ensemble of particles interacting as a fluid.  In these models, the spin is interpreted as a discretization of the particle density. In this way, spin systems can be used generally as a discretization of models with continuous state variables.    Spin systems have been considered in connection with the mean-field behavior of populations in \cite{horst2010dynamic}, \cite{horst2006equilibria}, and \cite{collet2016rhythmic}.  An Ising game with discrete player actions was studied in \cite{leonidov2021ising} played on graphs, where a dynamic evolution was considered that behaves similarly to the Ising model in the mean-field limit.

We combine the concept of spin systems with a multiplayer game, where each agent controls their own spin in an optimal manner.  The Ising game is a prototypical example of such a \emph{spin game} that mixes a discrete state variable with a continuous state position.  A fundamental distinction with models of statistical physics (as well as the evolution considered in \cite{leonidov2021ising}) is that players will look ahead, and their prediction of the future influences their control decisions. The spin game models provide an ideal environment to study the phenomenon of phase transitions with the addition of rational behavior.  

The combination of discrete and continuous state variables has yet to be considered in the literature on mean-field games, which extensively covers games with either discrete or continuous state variables.  A general treatment of finite-state mean-field games is given in \cite{carmona2018probabilistic} and \cite{gomes2013continuous}. Phase transitions were observed in \cite{kolokoltsov2016mean} as a bifurcation of the ergodic mean-field game system.  The solution to the master equation is analyzed in \cite{bayraktar2018analysis} and \cite{cecchin2019convergence}.  Phase transitions in continuous space mean-field games have been studied using bifurcation analysis in \cite{grover2018mean}.  Further analysis of the fluctuations about equilibria was undertaken in \cite{cecchin2019convergence}.

We find that the Ising game with nonlocal interactions undergoes a phase transition as the strength of player interactions changes. When the interaction strength is small, the players do not deviate from a `rest' behavior that results in independent spins with zero mean.  Above a critical interaction strength, the players will instead exert their control to align more closely with their neighbors, resulting in a nonzero mean.  

%When the range of the interaction is infinite, the Ising game reduces to a finite state mean-field game. 
 The phase transition corresponds to a bifurcation of solutions to the mean-field game system. 
When we include the nonlocal interactions of continuous spatial variables, we find new dynamics that we can best understand by considering a macroscopic limit. 

\subsection{Macroscopic Limit}
 The literature on the Ising model and other macroscopic limits of phase transition models is vast.  For a treatment of the analogous results in the Ising model, see \cite{de1994glauber}, \cite{bodineau1999wulff}, \cite{alberti1996surface}.  Novel mathematical tools were developed in \cite{evans1992phase}, \cite{katsoulakis1995generalized}.  Our approach is more akin to the work on the \emph{Van der Waals - Allen - Cahn - Hilliard} model of gradient phase transitions \cite{modica1987gradient}.  Additional tools for similar models are developed in \cite{bouchitte1990singular}, \cite{alberti1998non}, \cite{conti2002gamma}, \cite{sandier2004gamma}.

 We establish a surprising equivalence between the mesoscopic spin field optimal control problem and a mixture of the local and nonlocal \emph{Van der Waals - Allen - Cahn - Hilliard} models.  The resulting macroscale interface minimizes a cost functional, which takes the form of an anisotropic space-time area.  Alternatively, the resulting interface evolves in time with controlled propagation speed. Closely related macroscopic models of distributed optimal control are considered in \cite{bressan2021optimalmoving}, \cite{bressan2021optimal}.

 While we use many tools from the related phase transition models, combining them in this new way introduces many novel aspects of the analysis.  The boundary conditions imposed by the game theoretic considerations also appear as novel terms and require special treatment.

 \subsection{Main result, outline, and open questions} \label{s.mainresult-state}

Our contributions consist of both the introduction and the analysis of spin game models. More precisely, we start with introducing and motivating a class of spin game models at the microscopic, mesoscopic, and macroscopic scales.  Next, we focus on the mesoscopic to macroscopic scaling limit.  We reduce the study of equilibria in the \emph{Ising game} to critical points of an energy functional that combines a kinetic energy term, a double-well potential, and a nonlocal energy in the spatial directions.  We introduce an analysis of the ``effective surface tension" and initial and terminal time boundary layer costs associated with this model.  Finally, we use new analytical techniques, within the established context of the sharp interface limit theory of phase field models, to handle the mix of local and nonlocal interaction costs and the additional boundary layer terms.
 
After expressing a more general framework, our analysis focuses on a more specific \emph{Ising game}, which consists of the following elements:
\begin{itemize}
    \item A spin field on the space-time domain,  $s:[0,T]\times \mathbb{T}^d\rightarrow [-1,1]$, that represents the mean spin.  The spin field is determined by selecting control policies, $a_\pm :[0,T]\times \mathbb{T}^d \rightarrow \R^+$, that represent the rates of flipping from $+1$ to $-1$ and from $-1$ to $+1$. The evolution equation for the spin field is given by
    \begin{equation}\label{coupling}
        \lambda\, \partial_\tau s(\tau,z) = a_-(\tau,z)\, \frac{1-s(\tau,z)}{2} - a_+(\tau,z)\, \frac{1+s(\tau,z)}{2},
    \end{equation}
    where the small parameter $\lambda>0$ corresponds to a mesoscopic scale.
    \item A Lagrangian function, $L: [0,1]\times(\mathbb{R^+})^2\rightarrow \R$, of the local mean spin and controls, which is a local running cost density associated with a player controlling the rate at which their spin flips.  We work with the form
    $$
        L(s,a_{\pm}) = \beta^{-1}\, a_+\big(\log(a_+)-1\big)\frac{1+s}{2}+\beta^{-1}\, a_-\big(\log(a_-)-1\big)\frac{1-s}{2},
    $$
    that closely resembles an entropic term in the Ising model. The parameter $\beta^{-1}>0$ has an interpretation as a cost coefficient and appears analogous to the  temperature. The convex Lagrangian $L$ enforces that the flipping rates are positive and encourages $a_\pm$ to coincide at a neutral value. Consequently, $L$ encourages the mean spin $s$ to rest at zero.  The derivation of this form of Lagrangian from a microscopic model is covered in Section 2 for further motivation.
    
    \item An interaction running potential cost density of the form
    $$
        -\frac{1}{2}s(\tau,z)\big(J^\lambda ( s*\tau,\cdot)\big)(z),
    $$
    where $J^\lambda(z) = \lambda^{-d}J(\lambda^{-1}\, z)$ is a nonnegative, rescaled interaction kernel that encourages players to align their spins with their neighbors at a length scale of $\lambda$.  The strength of the interaction is given by $\hat{J} =\int_{\R^d} J(x)dx$. As explained in Section \ref{sec:games}, minimizing the total potential cost ($\mathcal{C}^\lambda$ below) corresponds to Nash equilibrium strategies.  
    The competition of the Lagrangian and the interaction energy results in phase transition  where, when $\beta\, \hat{J} > 1$, players prefer to organize themselves at a constant Nash equilibrium with mean spin $\mathfrak{s}>0$ or $-\mathfrak{s}$. We denote the corresponding running cost density of the constant Nash equilibria by $\bs\Lambda$.
            
    \item An initial spin configuration $s_0:\T^d\rightarrow[-1,1]$, and a terminal cost of the form
    $$
        \int_{\mathbb{T}^d} g(z)\, s(T,z)dz.
    $$
    These initial and terminal conditions cause solutions to deviate from the constant Nash equilibria.    
\end{itemize}

 Our specific problem is now to minimize and to study the sharp interface limit $\lambda \to 0$ of the averaged rescaled cost (in the macroscopic $(\tau,z)$ coordinates), under the constraint \eqref{coupling} and the initial data $s(\cdot,0)=s_0$,
\begin{align*}
\mathcal{C}^\lambda\big(s,a_\pm\big)=&\ \lambda^{-1}\, \int_0^{ T}\int_{\mathbb{T}^d}\Big[L\big(s(\tau,z),a_{\pm}(\tau,z)\big)-\frac{1}{2}\, s(\tau,z)\, (J^\lambda*s(\tau,\cdot))(z) - \bs\Lambda\Big]dz\, d\tau \\
      &\ + \int_{\T^d}g(z)\,  s(T, z) dz.\nonumber
\end{align*}

As $\lambda\rightarrow 0$, the mean spin concentrates on the set $\{-\mathfrak{s},\mathfrak{s}\}$, except on an interface $\Sigma$ and the boundary layers at $\tau=0$ and $\tau=T$.  The contribution of the asymptotic behavior at the mesoscopic $\lambda$-scale of $\mathcal{C}^\lambda$ allows us to characterize the interface between the equilibrium states as a local minimizer of the macroscopic energy, as we will see in the context of the $\Gamma$-convergence.

 Our analysis begins with the illuminating observation that the cost can be decomposed, up to a total derivative, into the sum of a double-well potential, a Dirichlet-like strictly convex function of $\partial_\tau s$, and a nonlocal interaction cost.  The integrand of the space-time integral of $\mathcal{C}^\lambda$ becomes
  $$
       \frac{1}{\lambda}\mathcal{W}_\beta(s(\tau,z)) + \frac{1}{2\beta} \partial_\tau s(\tau,z) \Phi'(s(\tau,z))+ \frac{1}{2\beta\lambda}\Psi(s(\tau,z),\lambda\, \partial_\tau s(\tau,z)) +\frac{1}{4\lambda}\int_{\mathbb{T}^d} J^\lambda(z-w) |s(\tau,z)-s(\tau,w)|^2dw    
  $$
  (see Corollary~\ref{decomposition}). 
  Using this decomposition, we show that the rescaled spin variable converges to one of the stable equilibrium states $-\mathfrak{s}$ or $\mathfrak{s}$, with a transition interface $\Sigma$ in between the states. Moreover, we show that the cost in the macroscopic limit, in $\Gamma$-convergence sense, is given by the sum of initial and terminal time  boundary layers, and the space-time integral of an anisotropic interfacial energy with ``effective surface tension" $\bar{L}$
 $$
\int_{\Sigma} \bar{L}\big(\nu(\tau,z)\big) d\mathcal{H}^d
 $$
where $\mathcal{H}^d$ is the $d$-dimensional Hausdorff measure on the space-time interface $\Sigma = \partial_* \{s = \pm\mathfrak{s}\}$ between the $\{-\mathfrak{s},\mathfrak{s}\}$ and $\nu$ is the space-time normal on $\Sigma$ pointing toward the $\mathfrak{s}$-region.   

Our main result, which is stated in full below in Theorem~\ref{thm:main}, is summarized by:

 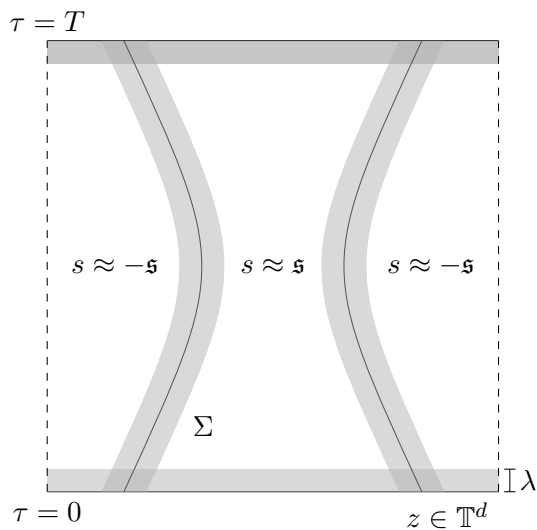
\begin{figure}
\begin{tikzpicture}[scale = 3]
%lambda width
\def\l{.1}
%initial and final times
\draw (-1,-1) node[anchor = north] {$\tau=0$}--(1,-1) node[anchor = north east] {$z\in \T^d$};
\path[fill=gray!60,semitransparent] (-1,-1)--(1,-1)--(1,-1+\l)--(-1,-1+\l);
\draw (-1,1) node[anchor = south] {$\tau=T$}--(1,1);
\path[fill=gray!60,semitransparent] (-1,1)--(1,1)--(1,1-\l)--(-1,1-\l);
\path[fill=gray!60,semitransparent] (-1,1)--(1,1)--(1,1-\l)--(-1,1-\l);

%side boundaries
\draw[dashed] (-1,-1) -- (-1,1);
\draw[dashed] (1,-1) -- (1,1);

%"catenoids"
\draw (-.66,-1) .. controls (-.2,0) .. (-.66,1);
\path[fill=gray!60,semitransparent] (-.66-\l,-1) .. controls (-.2-\l,0) .. (-.66-\l,1) -- (-.66+\l,1) .. controls (-.2+\l,0) .. (-.66+\l,-1);
\draw (-.5+\l,-.8) node[anchor = south west] {$\Sigma$};
\draw (.66,-1) .. controls (.2,0) .. (.66,1);
\path[fill=gray!60,semitransparent] (.66-\l,-1) .. controls (.2-\l,0) .. (.66-\l,1) -- (.66+\l,1) .. controls (.2+\l,0) .. (.66+\l,-1);

%width indicator
 \draw (1+.5*\l,-1)-- (1+.5*\l,-1+.5*\l) node[anchor=west]{$\lambda$} -- (1+.5*\l,-1+\l);
 \draw (1+.25*\l,-1)--(1+.75*\l,-1);
 \draw (1+.25*\l,-1+\l)--(1+.75*\l,-1+\l);
 % \draw (0,0) node {$s^\lambda \approx \mathfrak{s}$};
 % \draw (.7,0) node {$s^\lambda \approx -\mathfrak{s}$};
 %  \draw (-.7,0) node {$s^\lambda \approx -\mathfrak{s}$};
  \draw (0,0) node {$s \approx \mathfrak{s}$};
 \draw (.7,0) node {$s\approx -\mathfrak{s}$};
  \draw (-.7,0) node {$s\approx -\mathfrak{s}$};
\end{tikzpicture}
\caption{\label{f.lambda-scale-fig}Schematic diagram showing a cross section of $s$ solution in $d \geq 2$ (in $d=1$ such catenoidal type solution would not occur). Boundary layers at scale $\lambda$ are displayed around the phase interface and initial and final times.}
\end{figure}

 \begin{maintheorem}
 The mesoscopic scale cost functionals $\mathcal{C}^{\lambda}$ converge as $\lambda \to 0$, in the sense of  $\Gamma$-convergence on appropriate spaces, to the macroscopic scale cost
\begin{align*}
    \bar{V}(\bar{s}) =  \int_{\mathbb{T}^d}V^{init}\big({s}_0(z),\bar{s}(0,z)\big)dz+\int_{\mathbb{T}^d}V^{end}\big(\bar{s}(T,z),g(z)\big)dz +\int_{\Sigma} \bar{L}\big(\nu(\tau,z)\big) d\mathcal{H}^d
\end{align*}
among BV functions $\bar{s}:\T^d \to \{\pm \mathfrak{s}\}$ with $\Sigma=\Sigma(\bar{s})$ as the discontinuity set, and $\nu = \nu (\bar{s})$ as the measure-theoretic normal vector field on $\Sigma$, pointing toward the $\mathfrak{s}$-region.  

In particular, sequences of minimizers for $\mathcal{C}^{\lambda}$ are precompact in $L^1$ and any subsequential limit as $\lambda\to 0$ minimizes $\bar{V}$.
\end{maintheorem}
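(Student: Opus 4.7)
My plan is to prove the theorem by the standard three-step $\Gamma$-convergence program--compactness, $\liminf$ inequality, and recovery sequence--applied to the decomposition of $\mathcal{C}^\lambda$ from Corollary~\ref{decomposition}. The guiding structural observation is that the term $\frac{1}{2\beta}\partial_\tau s\,\Phi'(s) = \frac{1}{2\beta}\partial_\tau \Phi(s)$ is a pure time derivative and integrates to
\[
\tfrac{1}{2\beta}\int_{\T^d}\bigl[\Phi(s(T,z))-\Phi(s_0(z))\bigr]dz,
\]
which, combined with the terminal cost $\int g\,s(T,\cdot)$ and the initial constraint $s(0,\cdot)=s_0$, produces in the limit the two pointwise boundary functionals $V^{init}$ and $V^{end}$. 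The remaining bulk integrand is a mixed local-in-time/nonlocal-in-space Allen--Cahn type energy, and the effective surface tension $\bar{L}(\nu)$ will arise from a one-dimensional cell problem in the space-time direction $\nu$.

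\emph{Compactness and $\liminf$.} The double well $\frac{1}{\lambda}\mathcal{W}_\beta(s^\lambda)$ forces any $L^1$-limit point to be $\pm\mathfrak{s}$-valued almost everywhere. A Modica-type interpolation between the double well and the nonlocal interaction (in the spirit of \cite{alberti1998non},~\cite{bouchitte1990singular}) yields uniform spatial $BV$ control, while the strictly convex local-in-time term $\frac{1}{2\beta\lambda}\Psi(s^\lambda,\lambda\partial_\tau s^\lambda)$ together with the double well yields temporal $BV$ control; together these give $L^1$-precompactness on $[0,T]\times\T^d$, with any limit in $BV([0,T]\times\T^d;\{\pm\mathfrak{s}\})$. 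For the $\liminf$ I would use a blow-up argument at $\mathcal H^d$-a.e.\ interior point $(\tau_0,z_0)\in\Sigma$: rescaling $(t,y)=\lambda^{-1}(\tau-\tau_0,z-z_0)$ produces a $\lambda$-independent anisotropic one-dimensional transition problem in the direction $\nu(\tau_0,z_0)$, whose infimum is exactly $\bar{L}(\nu)$; a parallel blow-up at points $(0,z_0)$ and $(T,z_0)$ of the space-time boundary recovers $V^{init}$ and $V^{end}$, with $s_0$ and $g$ entering as prescribed data through the initial constraint and through the boundary trace of the total-derivative term, respectively.

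\emph{Recovery and main obstacle.} For the recovery sequence I would first treat $\bar{s}$ whose interface $\Sigma$ is a smooth hypersurface meeting the time boundaries transversally: paste the minimizing one-dimensional profile of the cell problem defining $\bar{L}(\nu)$ along the signed distance to $\Sigma$ over an $O(\lambda)$ tube, truncate to $\pm\mathfrak{s}$ outside the tube, and match to the optimal initial and terminal boundary-layer profiles near $\tau=0,T$. Because the kernel $J^\lambda$ has $\lambda$-scale support, cross-interactions between separated components of the construction contribute only lower-order error. A polyhedral approximation of finite-perimeter sets in space-time then extends the construction to general $\bar{s}\in BV([0,T]\times\T^d;\{\pm\mathfrak{s}\})$. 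The main technical obstacle is the $\liminf$ inequality: the energy couples a local-in-time derivative with a purely spatial nonlocal interaction, so neither classical Modica--Mortola slicing nor the purely nonlocal Alberti--Bellettini--Cassandro--Presutti theory applies directly. Showing that a blow-up sequence of transition profiles does not disperse in the spatial variables (despite the absence of a local spatial gradient) nor in time (despite the nonlocal-only treatment of space) requires a joint concentration estimate that combines the double well with the short-range behavior of $J$ after rescaling and with the strict convexity of $\Psi$. I expect this identification of the cell problem, together with the analogous identification at the initial and terminal boundary layers where the coupling to $s_0$ and $g$ produces the novel mixed boundary costs $V^{init}$ and $V^{end}$, to absorb the bulk of the technical work.
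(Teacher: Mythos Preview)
Your overall architecture---compactness, blow-up for the $\liminf$, and a density argument for the $\limsup$---matches the paper's, and your identification of the mixed local/nonlocal structure as the central difficulty is correct. However, there is one genuine gap and one significant structural difference worth flagging.

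\textbf{The gap.} You treat $\bar{L}(\nu)$ as the value of a \emph{one-dimensional} transition problem and propose to build the recovery sequence by pasting the minimizing one-dimensional profile along the signed distance to $\Sigma$. In the paper this reduction is explicitly an open conjecture (see the discussion following Proposition~\ref{prop:partial_characterization}): $\bar{L}(\nu)$ is defined in \eqref{eqn:interfacial_energy_R}--\eqref{eqn:interfacial_energy} as an infimum over $\square$-periodic functions on the full strip $\square\times\R\nu$, and it is \emph{not} known whether the infimum is achieved by a function depending only on $(t,x)\cdot\nu$. The one-dimensional characterization is proved only for $V^{init}$ and $V^{end}$ (Theorem~\ref{thm:characterization}), where the nonlocal term can be dropped. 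Your recovery construction and your blow-up identification of the limit both rely on this unproven reduction; without it, the recovery sequence must instead use near-optimal \emph{periodic} test functions from $\mathcal{X}_R(\square)$, tiled along the interface, and the blow-up must be periodized into a competitor for the actual cell problem.

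\textbf{The structural difference.} For the $\limsup$ the paper does not start from smooth hypersurfaces. It follows Alberti--Bellettini and proves the bound first for \emph{polyhedral} $\bar{s}$ by an induction on polyhedral pieces (Theorem~\ref{t.polyhedral-limsup}), then passes to general $BV$ data by Reshetnyak continuity. Flat faces allow direct tiling by periodic cell minimizers with no curvature error; the only nontrivial step is \emph{patching} across the edges where polyhedral pieces meet. This is where the local temporal energy $\lambda^{-1}\Psi(s,\lambda\partial_\tau s)$ bites: one cannot glue discontinuously as in the purely nonlocal setting, and the paper develops a dedicated defect estimate (Proposition~\ref{p.patchingestimate}) involving a $\lambda$-scale temporal mollification and an augmented notion of $\lambda$-trace convergence (Definition~\ref{d.lambdatrace}) that controls the local energy near the patching surface. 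Your proposal notes that nonlocal cross-terms are lower order but does not address how to preserve finite temporal energy across the seams; this patching machinery is the main new technical ingredient and is used both in the $\liminf$ blow-up (to periodize the rescaled sequence, Proposition~\ref{cor:periodization}) and in the $\limsup$ induction.
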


See \fref{lambda-scale-fig} for an illustration of a macroscopic limit $\bar{s}$.

 The initial and terminal time costs can be represented by a one-dimensional problem, showing that the solutions are locally constant in space near the boundary layer.  A remarkable relation appears between the initial and end times that 
 \begin{equation}\label{e.vendformula}
    V^{end}(\bar{s},g) = \inf_{s_0}\Big\{ V^{init}({s}_0,\bar{s}) + g\, s_0 + \frac{1}{\beta}\Phi(s_0)\Big\},
\end{equation}
(see Remark~\ref{layer_symmetry} below).  The $\Phi$ term contains all of the asymmetry between the initial and final time as the remaining terms in the decomposition of Corollary~\ref{decomposition} obey a time-reversal invariance as $s(\tau,z)$ goes to $s(-\tau,z)$. This relation also shows that without an end cost $g$ it is advantageous to relax back closer to $0$ from the equilibria $\pm\mathfrak{s}$ near the terminal time.  This feature is intimately connected with the forward-looking nature of control problems: the agents anticipate the end time and turn off their controls to save costs.

 The key element of the analysis is various quantitative versions of {\it patching estimates}, which we use to localize the profile of the spin variable near the transition interface. The other essential ingredient is the elegant idea introduced in \cite{alberti1998non} to study a nonlocal interaction energy, where the patching lemmas are applied to polyhedral regions to construct a recovery sequence for the $\Gamma$ convergence of the cost. While this approach does not readily yield quantitative error estimates, for our purpose it provides a relatively simple alternative of perturbing smooth surfaces to compare it with hyperplanes.

 There are interesting open questions that arise from our analysis. For instance, we can question the shape of the minimizer for $\bar{V}$ as well as the regularity or geometry of the interface.  We suspect that our limit Lagrangian $\bar{L}$ is at least continuous with respect to $\nu$ when the interaction kernel $J$ is isotropic, but it is not easy to check this due to the anisotropy created by the time variable. Even with a regular $\bar{L}$, the shape of the minimizing interface is not necessarily regular in higher dimensions, as we see from \cite{simons1968minimal}, \cite{morgan1991cone}.  It is also natural to ask whether $\bar{L}$ can be obtained by only considering planar traveling wave solutions. This is true in the case of the nonlocal interaction energy studied in \cite{alberti1998non}, see \cite{alberti98annalen}. We answer this question positively for the boundary layer costs, but it remains open for the interfacial cost. 
 
 Another natural question is on the asymptotic behavior of phase transition near $s=0$ when the ``inverse temperature" $\beta$ approaches the critical value where the local parts of the cost dominate and the double-well structure disappears via $\pm \mathfrak{s}$ converging to zero. 
 
 The boundary layer terms appearing in the macroscopic cost is a novel feature in our problem that merits further study: for instance, there is an apparent symmetry between the initial and terminal cost (see Remark \ref{layer_symmetry} and \eref{vendformula}).

While we specialize our model with a specific Lagrangian, for which calculation is convenient, we expect that our analysis extends directly to a more general class. For example, we can consider the Lagrangian functions that have the form
$$
L(s,a_\pm) = l(a_+)\frac{1+s}{2} + l(a_-)\frac{1-s}{2},
$$
where $l(a)$ is convex and satisfies $l'(a)\rightarrow -\infty$ as $a\rightarrow^+ 0 $ and $l'(a)\rightarrow +\infty$ as $a\rightarrow +\infty $.  

A more interesting and difficult open question is whether one can obtain similar results for nearest-neighbor interaction costs, analogous to what was achieved for the three-dimensional nearest-neighbor Ising model in \cite{bodineau1999wulff}.

    \section{Spin games}\label{sec:games}
    
    In this section, we introduce and provide a non-rigorous exposition on spin games: $N$-player games, mean-field control, and mean-field games. In the length scale spectrum considered in this work, the $N$-player game is a microscopic model, and the mean-field control and game problems are mesoscopic models.  The derivations discussed in this section are meant as motivation and contextualization for the rigorous mathematical work which we conduct later in the paper which considers a mesoscopic to macroscopic limit.
    
    \subsection{N-player spin games}

        We consider $N$ players with fixed positions on a uniform square lattice, $x^{N,i}\in \mathbb{T}^d$.  The collection of all positions is denoted as $\mb{x}^N\in \mathbb{T}^{dN}$.  Each player has a discrete spin state $\sigma^{N,i} \in \mathbb{S}=\{-1,1\}$, and the player controls the rate at which their spin flips according to the control $A^{N,i}_t\in \R^+$.  We denote the collection of all spins as $\bs\sigma^N\in \mathbb{S}^N$ and of all controls as $\mb{A}^N_t\in (\R^+)^N$. When determining their optimal strategy, each player may consider the states of all other players, which we encode into the empirical spin measure $m_{\bs\sigma^N,\mb{x}^N} \in \mathcal{M}(\mathbb{T}^d)$, the space of finite variation signed measures on $\T^d$,
        $$
            m_{\bs\sigma^N,\mb{x}^N} = \frac{1}{N}\sum_{i=1}^N \sigma^{N,i}\, \delta_{x^{N,i}}.
        $$
        Denote $\mathcal{P}(\mathbb{S}^N)$ the space of probability measures on spin configurations. We will consider the evolution of state distributions $\mu^N_t \in \mathcal{P}(\mathbb{S}^N)$.
        
        To ease the notation we drop $N$ when it can be inferred from the context.

         The problem consists of specifying the following:
        \begin{itemize}
            \item A Lagrangian function on the control space, $l: \mathbb{R^+}\rightarrow \R$, which is the cost associated with a player flipping their spin.
            \item An individual running player cost on the state and empirical measure space, $\tilde{f}:\mathbb{S}\times \mathbb{T}^d \times \mathcal{M}(\mathbb{T}^d)\rightarrow \R$.  We also consider the case of a global running cost that is a function only of the empirical measure $\bar{f}:\mathcal{M}(\mathbb{T}^d)\rightarrow \R$.
            \item A terminal cost on the state and empirical measure space, $\tilde{g}:\mathbb{S}\times \mathbb{T}^d \times \mathcal{M}( \mathbb{T}^d)\rightarrow \R$, and the analogous case of global terminal cost $\bar{g}:\mathcal{M}( \mathbb{T}^d)\rightarrow \R$.
            \item An initial distribution of states $\mu_0^N\in \mathcal{P}(\mathbb{S}^N)$. E.g., $\sigma^{i}$ are independent with mean $s_0(x^i)$ for $s_0:\T^d\rightarrow [-1,1]$.
        \end{itemize}

         An important aspect of game theoretic problems is the information available to the players.  We work here assuming full information, i.e., closed loop, where each player may choose their control as a function of the state of all the other players
         $$
            (t,\bs\sigma) \mapsto A^{i}_t(\bs\sigma).
         $$

    Given $\mb{A}$, we define the joint distribution $\mu_t^{\mb{A}}\in \mathcal{P}(\mathbb{S}^N)$ as the joint distribution of all players with spin $i$ flipping at rate $A^i_t(\bs\sigma)$, that is $\mu_t^{\mb{A}}$ is the solution of
    \begin{align}\label{eqn:N-evolution}
        \frac{d}{dt} \mu_t^{\mb{A}}(\bs\sigma) = \frac{1}{2}\sum_{i=1}^N\Big(A^i_t(\mathfrak{t}^i\bs\sigma)\mu^N_t(\mathfrak{t}^i\bs\sigma)-A^i_t(\bs\sigma){\mu}^N_t(\bs\sigma)\Big),
    \end{align}
    where $\mathfrak{t}^i\bs\sigma$ denotes the collections of spins with the $i$th component flipped to be $-\sigma^i$.
    
    {\it Global control problem.}
    We define the global cost to be
    \begin{align*}
        \mathfrak{C}^N(\mb{A}) =&\ \int_0^T\sum_{\bs\sigma\in \mathbb{S}^N} \Big[\frac{1}{N}\sum_{i=1}^Nl\big(A_t^{i}(\bs\sigma)\big)+\bar{f}\big(m_{\bs\sigma,\mb{x}}\big)\Big]\mu^{\mb{A}}_t(\bs\sigma)dt\\
        &\ +\sum_{\bs\sigma\in \mathbb{S}^N} \bar{g}(m_{\bs\sigma,\mb{x}})\mu^{\mb{A}}_T(\bs\sigma)
    \end{align*}
    and the control problem is
    \[\inf_{\mb{A}:[0,t]\times \mathbb{S}^d\rightarrow (\R^+)^N} \mathfrak{C}^N\big(\mb{A}\big).\]
    This has the form of either a standard optimal control problem with states in $\mathcal{P}(\mathbb{S}^N)$ or as a continuous time Markov decision process with discrete states in $\mathbb{S}^N$.
    We let $V^N:[0,T]\times \mathbb{S}^N\rightarrow \R$ be the value function that solves
    $$
        V^N_T(\bs\sigma)=-\bar{g}\big(m_{\bs\sigma,\mb{x}}\big),
    $$
    and
    $$
        \frac{\partial}{\partial t}V^N_t(\bs\sigma) + \frac{1}{N}\sum_{i=1}^N h\big(\frac{N}{2}\, \partial^iV^N_t(\bs\sigma)\big)=\bar{f}(m_{\bs\sigma,\mb{x}}),
    $$
    where $h$ is the Legendre transform of $a\mapsto l(a)$ and the discrete finite gradient is
    $$
        \partial^i V^N_t(\bs\sigma)=V^N_t(\mathfrak{t}^i\bs\sigma)- V^N_t(\bs\sigma).
    $$  
    By standard theory, the optimal control is then given by
    $$
        A^{i}_t(\bs\sigma) = h'\big(\frac{N}{2}\, \partial^i V^N_t(\bs\sigma)\big).
    $$
    
    {\it $N$-player game.}
    We define the individual costs to be
    \begin{align*}
        \mathfrak{c}^{N,i}(\mb{A}) =&\ \int_0^T\sum_{\bs\sigma\in \mathbb{S}^N}\Big[l\big(A_t^{i}(\bs\sigma)\big)+\tilde{f}\big(\sigma^i,x^i,m_{\bs\sigma,\mb{x}}\big)\Big]\mu^{\mb{A}}_t(\bs\sigma)dt\\
        &\ +\sum_{\bs\sigma\in \mathbb{S}^N}\tilde{g}(\sigma^i,x^i,m_{\bs\sigma,\mb{x}})\mu^{\mb{A}}_T(\bs\sigma).
    \end{align*}
    We consider the differential game played by the $N$ players. 
    A Nash equilibrium is collection of controls $\mb{A}$ such that for each $i$ we have
    $$
        \mathfrak{c}^{N,i}(\mb{A}) \leq \inf_{B}\mathfrak{c}^{N,i}\big((\ldots, A^{i-1},B,A^{i+1},\ldots)\big).
    $$

    We look for coupled solutions $v^{N,i}_t$ with
    $$
        v^{N,i}_T(\bs\sigma)=-\tilde{g}(\sigma^i,x^{i},m_{\bs\sigma,\mb{x}}),
    $$
    and
    \begin{align*}
        0=&\ \frac{\partial}{\partial t}v^{N,i}_t(\bs\sigma) + \frac{1}{2}\sum_{\substack{ 1\leq j \leq N\\j\not=i}} A^{j}_t(\bs\sigma) \partial^j v_t^{N,i}(\bs\sigma)\\
        &\ + h\big(\frac{1}{2}\, \partial^i v_t^{N,i}(\bs\sigma)\big)-\tilde{f}\big(\sigma^i,x^{i},m_{\bs\sigma,\mb{x}}\big),
    \end{align*}
    with
    $$
        A^{j}_t(\bs\sigma) = h'\big(\frac{1}{2}\, \partial^j v_t^{N,j}(\bs\sigma)\big).
    $$

\subsection{Mean-field spin games}\label{sec:mean_feld_game}

Since the dependence of each player's costs on the other players is only in terms of the empirical spin measure, one expects the system to limit to a mean-field game as $N \to \infty$. Specifically, the \emph{random} empirical spin measures, $m_{\bs\sigma,\mb{x}}$, concentrate on a flow of \emph{deterministic} spin fields $s(t,x)$ corresponding to the mean of $\sigma^i$ for $x^i$ near $x$. We follow this concept in order to, non-rigorously, derive the corresponding mean-field game system in the infinite-player limit.

For the mean-field version, we consider control policies $a_\pm(t,x)$.  We work in terms of the spin field $s(t,x)$, which represents the average state of the players near $x$ at time $t$.  We note that the density of players in state $\pm1$ can be recovered as $\frac{1\pm s(t,x)}{2}$.    The evolution of the spin field is given by
  \begin{align}\label{eqn:evolution_mf}
    \frac{\partial}{\partial t} s(t,x) =&\ a_-(t,x)\frac{1-s(t,x)}{2}-a_+(t,x)\frac{1+s(t,x)}{2}\\
    s(0,x)=&\ s_0(x).\nonumber
  \end{align}

We assume that $\tilde{f}(\sigma,x,m)=-\tilde{f}(-\sigma,x,m)$, and when $s(x)dx=m(dx)$ we write $f(x,s) = \tilde{f}(+1,x,m)$.  When $m_{\bs\sigma,\mb{x}}$ concentrates at $s(t,x)$ under the probability measure $\mu_t(\bs\sigma)$, we have
$$
    \sum_{\bs\sigma\in \mathbb{S}^N}\frac{1}{N}\sum_{i=1}^N\tilde{f}(\sigma^i,x^i,m_{\bs\sigma,\mb{x}})\mu_t(\bs\sigma) \approx  \sum_{\sigma \in \mathbb{S}}\int_{\mathbb{T}^d}\sigma f\big(x,s(t,\cdot)\big)\frac{1+\sigma\, s(t,x)}{2}dx =  \int_{\mathbb{T}^d} f\big(x,s(t,\cdot)\big)s(t,x)dx,
$$
and we do the same for $\tilde{g}$ and $g$.  We also abuse notation slightly, to write $\bar{f}(s)=\bar{f}(m)$ when $s(x)dx=m(dx)$. The state space of spin fields $s(t,\cdot)$ is denoted by $X$ which is the unit ball in $L^\infty(\T^d)$.

The global cost is given by
    \begin{align*}
        \mathcal{C}(s,a) =&\ \int_0^T\Big[ \int_{\mathbb{T}^{d}}L\big(s(t,x),a_\pm(t,x)\big)dx+\bar{f}\big(s(t,\cdot)\big)\Big] dt +\bar{g}\big(s(T,\cdot)\big),
    \end{align*}
    where
    $$
        L(s,a_\pm) = \sum_{\sigma\in \mathbb{S}} l(\sigma,x,a)\frac{1 + \sigma\, s}{2}.
    $$
    
   {\it Mean-field global control.} The global optimal control problem is
    \[\inf_{s,a_\pm} \Big\{\mathcal{C}(s,a); (s,a) \hbox{ solves } (\ref{eqn:evolution_mf})\Big\}.\]

    The value function is defined
    \begin{align}\label{eqn:dynamic programming}
        V_{t_0}(s_0)=\sup_{s,a_\pm} \Big\{&-\int_{t_0}^T \Big[\int_{\mathbb{T}^{d}}L\big(s(t,x),a_\pm(t,x)\big)dx+\bar{f}\big(s(t,\cdot)\big)\Big] dt -\bar{g}\big(s(T,\cdot)\big);\\
        &\ \frac{\partial}{\partial t} s(t,x) = a_-(t,x)\frac{1-s(t,x)}{2}-a_+(t,x)\frac{1+s(t,x)}{2} \hbox{ on } (t,T]\times \mathbb{T}^d, \nonumber\\
        &\ s(t_0,\cdot)=s_0\Big\}.\nn
    \end{align}
   The McKean-Vlasov equation (\ref{eqn:evolution_mf}) can be expressed as, for all $\phi \in C^1\big([0,T]\times X;\R\big)$,
    \begin{align*}
    &\ \phi_T\big(s(T,\cdot)\big)-\phi_0(s(0,\cdot)\big) \\
     =&\   \int_0^T \Big[ \frac{\partial}{\partial t}\phi_t(s(t,\cdot)\big) +\sum_{\sigma\in \mathbb{S}}\int_{\mathbb{T}^d}D\phi_t(s(t,\cdot)\big)(x)(-\sigma)a_\sigma(t,x)\frac{1+\sigma\, s(t,x)}{2}dx\Big] dt,
    \end{align*}
    where $D\phi(r)$ is a Frech\'{e}t derivative.
    
   Let the Hamiltonian
    $$
        H(s,p) = \sum_{\sigma\in \mathbb{S}} h(- \sigma\, p) \frac{1+\sigma\, s}{2}.
    $$
Formally following the normal derivation using the McKean-Vlasov equation one arrives at the Hamilton-Jacobi-Bellman equation on the spin-field space
    \begin{align}\label{eqn:HJB_Wasserstein}
        \frac{\partial}{\partial t} V_t(s)+\int_{\T^d} H\big( s(x),D V_t(s)({x})\big)dx = \bar{f}(s),
    \end{align}
    with 
    $$
        V_T(s)=-\bar{g}(s).
    $$
    
    {\it Mean-field spin game.} For the mean-field game, we fix a flow of spin-fields $r(t,\cdot) \in X$, and we consider the cost
    \begin{align*}
       \mathcal{C}(s,a_\pm;r) =&\ \int_0^T \int_{\mathbb{T}^{d}}\Big[L\big(s(t,x),a_\pm(t,x)\big)+f\big(x,r(t,\cdot)\big)s(t,x)\Big]dx\, dt\\
        &\  +\int_{\mathbb{T}^{d}}g\big(x,r(T,\cdot)\big)s(T,x)dx.
    \end{align*}
    We are looking for Nash equilibria, i.e. $(s,a_\pm)$ that satisfy (\ref{eqn:evolution_mf}) such that
    $$
        \mathcal{C}(s,a_\pm;s)\leq \mathcal{C}(s',a_\pm';s)
    $$
    for all $(s',a'_\pm)$ that satisfy (\ref{eqn:evolution_mf}).  We can equivalently consider the set-valued map 
    $$
        \Phi(r)=\Big\{s:\ (s,a_\pm)\hbox{ satisfy  (\ref{eqn:evolution_mf}) and   minimize } \mathcal{C}(s,a_\pm;r)\Big\},
    $$
         and we want to find a fixed point $s\in \Phi(s)$.

    In the game case, the value function $v_t(x,s)$ solves
    \begin{align}\label{eqn:master}
        0=&\ \frac{\partial}{\partial t} v_t(x,s)+\sum_{\sigma \in \mathbb{S}}\int_{\T^d} A_t\big(\sigma,y,s\big)(-\sigma)\frac{1+\sigma\, s(y)}{2} Dv_t(x,s)(y)dy \\
        &\ +\sum_{\sigma\in \mathbb{S}}\frac{\sigma}{2}h(-\sigma\, v_t(x,s)\big)- f(x,s),\nonumber
    \end{align}
    with
    $$
        v_t(x,s)=- g(x,s),
    $$
    and
    $$
        A_t\big(\gamma,y,s\big)=h'\big(-\gamma\, v_t(y,s)\big).
    $$

    \subsection{Survey of results}
     We now list a few standard results, which are common in either finite-state mean-field games or continuous mean-field games \cite{carmona2018probabilistic}, \cite{gomes2013continuous}.  The proofs can all be adapted to spin games.
     
    \emph{Potential games}. 
    A potential game occurs when the costs, $f(x,s)$ and $g(x,s)$ are derived from potential costs as $f(x,s)=D\bar{f}(s)(x)$ and $g(x,s)= D\bar{g}(s)(x)$.  In this case, the Nash equilibria for the mean-field game correspond to critical points of the global control problem.

    \begin{proposition}\label{prop:potential}
        We suppose that $f(x,s)=D\bar{f}(s)(x)$ and $g(x,s)= D\bar{g}(s)(x)$.  If $V$ is a solution to (\ref{eqn:HJB_Wasserstein}) on $[0,T]\times X$ then $v_t(x,s)=DV_t(s)(x)$ solves (\ref{eqn:master}).
    \end{proposition}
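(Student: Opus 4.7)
The plan is to derive the master equation (\ref{eqn:master}) by differentiating the Hamilton-Jacobi-Bellman equation (\ref{eqn:HJB_Wasserstein}) with respect to $s$ in the Fr\'echet sense and evaluating the resulting linear functional at the spatial point $x$. Set $v_t(x,s) := DV_t(s)(x)$. The terminal condition comes for free from the potential hypothesis on $g$, since $v_T(x,s) = DV_T(s)(x) = -D\bar{g}(s)(x) = -g(x,s)$, so the real content is identifying the evolution equation.

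The first step is to test (\ref{eqn:HJB_Wasserstein}) against a direction $\phi \in L^2(\T^d)$ and apply the chain rule to $s \mapsto \int_{\T^d} H\bigl(s(y), DV_t(s)(y)\bigr)\,dy$. The variation splits into a term from $\partial_1 H$ acting on $\phi(y)$ directly, and a term from $\partial_2 H$ acting on the second variation $D^2V_t(s)[\phi](y)$. Reading off the resulting linear functional in $\phi$ as a density at the point $x$, and invoking symmetry of the second Fr\'echet derivative to rewrite the kernel $D^2V_t(s)(y,x) = D_s v_t(x,s)(y)$, the differentiated HJB takes the form
\begin{equation*}
\partial_t v_t(x,s) + \partial_1 H\bigl(s(x),v_t(x,s)\bigr) + \int_{\T^d}\partial_2 H\bigl(s(y),v_t(y,s)\bigr)\,D_s v_t(x,s)(y)\,dy = D\bar{f}(s)(x).
\end{equation*}

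Next I would plug in the explicit form $H(s,p) = \sum_{\sigma\in\mathbb{S}} h(-\sigma p)\tfrac{1+\sigma s}{2}$ and compute
\begin{equation*}
\partial_1 H(s,p) = \sum_{\sigma\in\mathbb{S}} \tfrac{\sigma}{2}\,h(-\sigma p),\qquad \partial_2 H(s,p) = \sum_{\sigma\in\mathbb{S}} (-\sigma)\,h'(-\sigma p)\,\tfrac{1+\sigma s}{2}.
\end{equation*}
The potential hypothesis $D\bar{f}(s)(x) = f(x,s)$ supplies the right-hand side, and recognizing $A_t(\sigma,y,s) = h'(-\sigma v_t(y,s))$ inside the expression for $\partial_2 H$ reproduces the nonlocal transport term of (\ref{eqn:master}) exactly, while the $\partial_1 H$ term is precisely $\sum_{\sigma}\tfrac{\sigma}{2}h(-\sigma v_t(x,s))$. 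Matching term by term then yields (\ref{eqn:master}).

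The main obstacle is justifying the formal manipulations: differentiation under the spatial integral, interchange of $\partial_t$ with the Fr\'echet derivative, and above all the symmetry of $D^2V_t(s)$. All three demand regularity of $V$ on the infinite-dimensional state space $X$ -- essentially that $V$ be twice Fr\'echet differentiable with a jointly continuous and symmetric Hessian on some admissible subspace. Since this subsection is a formal survey of mean-field game results, I would carry out the calculation at that level and flag the regularity as a hypothesis; a rigorous treatment would likely proceed by smoothing $V$, running the argument on the approximation, and passing to the limit through a stability theory for the master equation.
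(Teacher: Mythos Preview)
Your proposal is correct and follows essentially the same approach as the paper: differentiate the HJB equation (\ref{eqn:HJB_Wasserstein}) in the Fr\'echet sense with respect to $s$, compute the derivative of the Hamiltonian integral via $\partial_1 H$ and $\partial_2 H$, and use symmetry of $D^2V_t(s)$ to identify the nonlocal term as $Dv_t(x,s)(y)$. Your additional remarks on the regularity needed to make the computation rigorous are appropriate given that the paper presents this section as a formal survey.
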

    \begin{proof}
        Equation (\ref{eqn:master}) is obtained by differentiating (\ref{eqn:HJB_Wasserstein}) with respect to the field argument, $s$.  In particular, we have
        \begin{align*}
            &\ D\int_{\T^d} H\big( s(y),D V_t(s)({y})\big)dy(x)\\
            =&\ \sum_{\sigma\in \mathbb{S}}\frac{\sigma}{2}\, h\big(-\sigma\, v(x,s)\big) + \int_{\mathbb{T}^d}\sum_{\sigma\in \mathbb{S}}(-\sigma)\partial_p h\big(-\sigma\, DV_t(s)(y)\big)\frac{1+\sigma\, s(y)}{2}D^2V_t(s)(y,x)dy\\
            =&\ \sum_{\sigma\in \mathbb{S}}\frac{\sigma}{2}\, h\big(-\sigma\, DV_t(s)(x)\big) + \int_{\mathbb{T}^d}\sum_{\sigma\in \mathbb{S}}(-\sigma)A_t(\sigma,y,s)\frac{1+\sigma\, s(y)}{2}Dv(x,s)(y)dy.
        \end{align*}
    \end{proof}

    \emph{Mean-field Nash System}. 
    In either the game or global case  we have the mean-field Nash system, which is
    \begin{align}\label{eqn:mfg_system}
        \frac{\partial}{\partial t} s(t,x) =&\ a_-(t,x)\frac{1-s(t,x)}{2}-a_+(t,x)\frac{1+s(t,x)}{2}\\
        -\frac{\partial}{\partial t} p(t,x)=&\ \sum_{\sigma\in \mathbb{S}}\frac{\sigma}{2}h\big(-\sigma\, p(t,x)\big)- f\big(x,s(t,\cdot)\big)\nn\\
        a_\sigma(t,x)=&\ h'\big(- \sigma\, p(t,x)\big).\nn
    \end{align}
    With $s(0,x)=s_0(x)$  and $p(T,x)=-g\big(x,s(T,\cdot)\big)$. 

    \emph{Monotonicity}. 
    If $f$ and $g$ are monotone, i.e.,
    $$
        \Big(f(x,s^1)-f(x,s^2)\Big)\Big(s^1(x)-s^2(x)\Big) \geq 0, {\rm \ and,\ } \Big(g(x,s^1)-g(x,s^2)\Big)\Big(s^1(x)-s^2(x)\Big)\geq 0,
    $$ 
    then the solution to (\ref{eqn:mfg_system}) is unique.  We will be interested in the phenomena that arise without this property.

    \begin{proposition}
    Suppose that $(s^1,p^1)$ and $(s^2,p^2)$ are two solutions to (\ref{eqn:mfg_system}).  If $f$ and $g$ are monotone, then $s^1=s^2$ and $p^1=p^2$.
    \end{proposition}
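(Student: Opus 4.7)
The plan is to adapt the classical Lasry--Lions monotonicity computation to the Hamiltonian structure of the spin system. Let $(s^i,p^i)$, $i=1,2$, be two smooth solutions of \eqref{eqn:mfg_system}, and set $\delta s := s^1-s^2$ and $\delta p := p^1-p^2$. Introducing the Hamiltonian
$$H(s,p) = \sum_{\sigma\in \mathbb{S}} h(-\sigma p)\,\tfrac{1+\sigma s}{2},$$
the spin and adjoint equations read $\partial_t s^i = \partial_p H(s^i,p^i)$ and $-\partial_t p^i = \partial_s H(s^i,p^i) - f(x,s^i(t,\cdot))$. I would then track the pairing
$$\Psi(t) := \int_{\T^d} \delta p(t,x)\,\delta s(t,x)\, dx,$$
and show it is nondecreasing, while being sandwiched between $\Psi(0)=0$ and $\Psi(T)\le 0$.

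Differentiating in $t$ and substituting the equations gives
$$\Psi'(t) = \int_{\T^d}\big[f(x,s^1)-f(x,s^2)\big]\delta s\, dx + \int_{\T^d}\mathcal{M}(x,t)\, dx,$$
where the saddle increment is
$$\mathcal{M} := -\big[\partial_s H(s^1,p^1)-\partial_s H(s^2,p^2)\big]\delta s + \big[\partial_p H(s^1,p^1)-\partial_p H(s^2,p^2)\big]\delta p.$$
The first integral is nonnegative by the assumed monotonicity of $f$. For $\mathcal{M}$, the structural observation is that $H$ is \emph{linear} in $s$ (so $\partial_s^2 H \equiv 0$) and \emph{convex} in $p$ uniformly for $s\in[-1,1]$, since $\partial_p^2 H = h''(-p)\tfrac{1+s}{2}+h''(p)\tfrac{1-s}{2}\ge 0$ by convexity of $h$. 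Writing $\mathcal{M}$ as a line integral of the Jacobian of $V := (-\partial_s H,\partial_p H)$ along the segment from $(s^2,p^2)$ to $(s^1,p^1)$, only the symmetric part survives and one obtains
$$\mathcal{M}(x,t) = \int_0^1 \partial_p^2 H\big(s^2+u\delta s,\, p^2+u\delta p\big)\,(\delta p)^2\, du \;\ge\; 0.$$
Hence $\Psi'(t)\ge 0$.

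Next, the boundary conditions pin $\Psi$ at the endpoints: $\delta s(0,\cdot)=0$ gives $\Psi(0)=0$, and the terminal condition $p^i(T,\cdot)=-g(\cdot,s^i(T,\cdot))$ combined with monotonicity of $g$ yields
$$\Psi(T) = -\int_{\T^d}\big[g(x,s^1(T,\cdot))-g(x,s^2(T,\cdot))\big]\delta s(T,x)\, dx \;\le\; 0.$$
With $\Psi'\ge 0$, $\Psi(0)=0$, and $\Psi(T)\le 0$, we must have $\Psi\equiv 0$ and $\Psi'\equiv 0$; consequently both nonnegative integrands vanish a.e. Strict convexity of $h$ (which follows from strict convexity of the Lagrangian $l$, since $h''=1/l''\circ(l')^{-1}$) makes $\partial_p^2 H>0$ on $[-1,1]\times\R$, so $\mathcal{M}\equiv 0$ forces $\delta p\equiv 0$. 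With $p^1=p^2$, the flip rates $a_\sigma^i = h'(-\sigma p^i)$ coincide and $s^1,s^2$ satisfy the same linear-in-$s$ ODE with the common initial datum $s_0$; ODE uniqueness then gives $\delta s\equiv 0$.

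The main obstacle is the degeneracy of $H$ in the $s$-variable: $H$ is only linear (not strictly concave) in $s$, so the saddle monotonicity expression $\mathcal{M}$ controls $\delta p$ but is blind to $\delta s$. Unlike the standard MFG setting, one cannot extract uniqueness of the state variable directly from $\mathcal{M}\equiv 0$; the resolution is to first recover $\delta p=0$ from strict convexity of $H$ in $p$, and then propagate this to $\delta s=0$ via the spin evolution equation. It is worth noting that strict monotonicity of $f$ or $g$ is \emph{not} required: the entire strict convexity used lives in the Hamiltonian, while monotonicity of the couplings only supplies the nonstrict endpoint inequalities that sandwich $\Psi$.
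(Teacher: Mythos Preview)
Your proof is correct and follows essentially the same Lasry--Lions monotonicity argument that the paper invokes (referencing Cardaliaguet et al.): the paper's tracked quantity $\sum_\sigma \sigma(p^1-p^2)\tfrac{1+\sigma s^1}{2}-\sum_\sigma \sigma(p^1-p^2)\tfrac{1+\sigma s^2}{2}$ simplifies exactly to your pairing $\delta p\,\delta s$. Your version is more explicit than the paper's sketch, and in particular your observation that the linearity of $H$ in $s$ forces the strict convexity step to go through $\delta p$ first (and only then recover $\delta s$ via the ODE) is a correct and useful elaboration of the argument.
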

    \begin{proof}
        The proof follows exactly the same idea as the monotonicity argument for continuous games as in, for example, Proposition 3.2 of \cite{cardaliaguet2019master}, by showing that the quantity
        $$
            \sum_{\sigma \in \mathbb{S}}\big(\sigma\, p^1(t,x)-\sigma\, p^2(t,x)\big)\frac{1+\sigma\, s^1(t,x)}{2} - \sum_{\sigma \in \mathbb{S}}\big(\sigma\, p^1(t,x)-\sigma\, p^2(t,x)\big)\frac{1+\sigma\, s^2(t,x)}{2}
        $$
        decreases along the flow, and is nonnegative at the end-time due to the monotonicity of $g$.
    \end{proof}

\emph{Convergence of $N$-player games}. 
Solutions of the $N$-player game / global control problem converge to the solutions of the mean-field game / global control problem when the interactions are in a mean-field form. Without the uniqueness of solutions, it is often necessary to consider a weaker randomized notion of solutions. On the other hand, the solution to the master equation (\ref{eqn:master}) constructs approximate solutions to the $N$-player problem.  Results on the convergence of continuous games can be found in \cite{cardaliaguet2019master} and \cite{lacker2020convergence}.  The convergence problem for finite state games has been analyzed in \cite{cecchin2019convergence} and \cite{gomes2013continuous}.  The master equation has also been used to determine the fluctuations about the mean and a large deviations principle \cite{cecchin2019convergence}, \cite{delarue2019master}.  We expect the results for the framework of \emph{spin games} to follow the identical trends from these works, although we do not pursue them in detail here.

A remarkable aspect of these results is that the resulting mean-field game system (\ref{eqn:mfg_system}) does not depend on the information structure of the players, or even whether the problem originated as a game or as a global distributed optimal control problem.  We focus on this system as the starting point for our macroscopic convergence analysis.

\section{Ising game and Macroscopic limit}

We now specify a problem formulation for which we consider in depth the question of a macroscopic scaling limit.  The problem, modeled after the statistical Ising model, exhibits a phase transition, where in the `ordered' phase two stable stationary equilibria solutions are present. In the macroscopic limit, all equilibria will concentrate on these two solutions except on a codimension-one interface in space-time.  We consider only the case of a potential game, in which case the global optimizers correspond to Nash equilibria. In contrast with the Ising model, the interface is `controlled', to minimize an inhomogeneous space-time surface area, which can also be viewed as a minimization of the speed of propagation along the front. See the discussion at the end of Section 3.4.

We work on the `mesoscopic' domain $[0,\lambda^{-1}\, T]\times \lambda^{-1}\mathbb{T}^d$, where $\lambda^{-1}\mathbb{T}^d$ is the $d$-dimensional torus of width $\lambda^{-1}$ that can be associated with $[0,\lambda^{-1}]^d\subset \R^d$.  We recall that in the discussion in Section \ref{sec:games} we have already passed from a `microscopic' scale, which appears in the mesoscopic scale as a length scale of order $\lambda^{-1}\, N^{-1/d}$  (so we are effectively considering that $N>>\lambda^{-d}>>1$). 

The interaction will be determined by a kernel satisfying the following assumptions:
\begin{enumerate}[label = (A\arabic*)]
\item $J : \R^d \to \R$ is non-negative and has finite total mass
\[ \hat{J}:= \int_{\R^d} J(x)dx.\] \label{a.Jpos}
\item \label{a.Jmoment} Finite first moment
\[ \int_{\R^d} |h|J(h) dh < + \infty.\]
\end{enumerate}

  The interaction acts at a distance of order one on the {\it mesoscopic scale} where we use $(t,x)$, which will appear as a distance of order $\lambda$ on the {\it macroscopic scale} where we use $(\tau,z)=(\lambda\, t, \lambda\, x)$. 
We consider the convolution on a torus, for $\eta\in L^2(\lambda^{-1}\, \mathbb{T}^d)$, $J*\eta\in L^2(\lambda^{-1}\, \mathbb{T}^d)$ as
$$
    (J*\eta)(x) = \int_{ \mathbb{R}^d} J(x-y)\, \eta(y)\, dy,
$$
where we have used the periodic extension of $\eta$ to $\R^d$.

\subsection{Problem statement and macroscopic scaling}
We rescale the cost by subtracting the cost of the stationary equilibria, $\bs\Lambda$, and multiplying by $\lambda^{d}$ to capture the costs on a co-dimension one region. We consider the asymptotics as $\lambda\rightarrow^+ 0$ of the problem to minimize
\begin{align}\label{eqn:cost}
\mathcal{C}^\lambda\big(s,a_\pm\big):=&\ \lambda^d\, \int_0^{\lambda^{-1}\, T}\int_{\lambda^{-1}\mathbb{T}^d}\Big[L\big(s(t,x),a_{\pm}(t,x)\big)+f\big(x,s(t,\cdot)\big) - \bs\Lambda\Big]dx\, dt \\
      &\ + \lambda^{d}\int_{\lambda^{-1}\T^d}g(\lambda\, x)\, s(\lambda^{-1}\, T, x)dx\nonumber
\end{align}
where we define, inspired from the microscopic problem with $l(a) = \beta^{-1}\, a\, (\log(a) - 1)$,
\begin{align}\label{eqn:Lagrangian_cost}
    L(s,a_{\pm}) := \beta^{-1}\, a_+\big(\log(a_+)-1\big)\frac{1+s}{2}+\beta^{-1}\, a_-\big(\log(a_-)-1\big)\frac{1-s}{2},
\end{align}
and
\begin{align}\label{eqn:interaction_cost}
    f(x,s) := -\frac{1}{2}\, s(x)\, (J*s)(x),
\end{align}
subject to the constraint
\begin{align}\label{eqn:evolution}
    \partial_t s(t,x) =&\ a_-(t,x)\, \frac{1-s(t,x)}{2} - a_+(t,x)\, \frac{1+s(t,x)}{2},\\
    s(0,x)=&\ s_0(\lambda\, x).\nonumber
\end{align}

The Lagrangian incentivizes the neutral strategy, $a_\pm = 1$, where the spin switching rate is always 1.  With this strategy, the mean spin would lie at rest at $s=0$.  The parameter $\beta$ has the same effect as the inverse temperature in the Ising model, although the interpretation here is a control penalty and not inherently statistical.  In the same fashion as the Ising model, the interaction incentivizes agreeing with nearby spins when $\hat{J}>0$.

The constant, $\bs\Lambda$ that corresponds to the cost of the stationary equilibria, is given by
$$
    \bs\Lambda := -\frac{\hat{J}}{2}-\frac{1}{2\, \beta^2\, \hat{J}}.
$$

\begin{remark}
We assume that $g$ does not depend on the spin field for simplicity, although our techniques would allow such dependence.  In particular, it is natural to allow $g$ to depend on $s(x)$ locally, which is slightly different from the terminal cost in Section \ref{sec:games}, where it was assumed that $g$ was defined over the empirical measures. This local form meshes naturally with our macroscopic analysis and could be the limiting result of a slightly more complicated microscopic problem.
\end{remark}

 We introduce the costate $p(t,x)$, so that $a_{\pm}$ maximizes the Hamiltonian,
\begin{align*}
    H(s,p) : =&\ \max_{a_\pm} \left\{\big(a_-\, \frac{1-s}{2} - a_+\, \frac{1+s}{2}\big)\, p-L(s,a_{\pm})\right\}\\
    =&\ \beta^{-1}\, \cosh(\beta\, p) - \beta^{-1} \, s\, \sinh\big(\beta\, p),
\end{align*}
where the maximum occurs at 
$$
    a_-=e^{\beta\, p}, \quad a_+=e^{-\beta\, p}.
$$
The optimality equations, equivalent to (\ref{eqn:mfg_system}), are
\begin{align}\label{eqn:Ising_mfg_system}
\partial_t s(t,x) =&\ \sinh\big(\beta\, p(t,x)\big) - s(t,x)\, \cosh\big(\beta\, p(t,x)\big)\\
-\partial_t p(t,x) =&\ -\beta^{-1}\, \sinh\big(\beta\, p(t,x)\big) + \big(J*s(t,\cdot)\big)(x), \nonumber
\end{align}
with
\begin{align*}
    s(0,x) =&\ s_0(\lambda\, x)\\
    -p(\lambda^{-1}\, T,x) = &\ g\big(\lambda\, x).
\end{align*}
Because we work directly with the energy, which we will view as a function of $(s,\partial_t s)$, we mostly will not refer to (\ref{eqn:Ising_mfg_system}) nor the costate $p$.

We note that the problem can also be posed in the macroscopic coordinates of 
$$(\tau,z):=(\lambda\, x,\lambda\, t). $$  In the new variables the cost can be now written as 
\begin{align*}
    \mathcal{C}^\lambda\big(s,a_\pm\big)=\lambda^{-1}\int_0^{ T}\int_{\mathbb{T}^d}\Big[L\big(\hat{s}(\tau,z),\hat{a}_{\pm}(\tau,z)\big)+f^\lambda\big(z,\hat{s}(\tau,\cdot)\big)-\bs\Lambda\Big]dz\, d\tau + \int_{\T^d}g(z)\, \hat{s}( T, z)dz
\end{align*}
with $\hat{s}(\tau,z) = s(\lambda^{-1}\, \tau\, \lambda^{-1}\, z)$ moving with the 'fast' dynamics
\begin{align*}
    \lambda\, \partial_\tau \hat{s}(\tau,z) = \hat{a}_-(\tau,z)\, \frac{1-\hat{s}(\tau,z)}{2} - \hat{a}_+(\tau,z)\, \frac{1+\hat{s}(\tau,z)}{2},
\end{align*}
and with the 'short' range interaction
$$
    f^\lambda(z,\hat{s}) := -\frac{1}{2}\, \hat{s}(z)\, (J^\lambda*\hat{s})(z),
$$
with
$$
    J^\lambda(z) := \lambda^{-d}J(\lambda^{-1}\, z).
$$

The corresponding optimality equations are
\begin{align*}
\lambda\, \partial_\tau \hat{s}(\tau,z) =&\ \sinh\big(\beta\, \hat{p}(\tau,z)\big) - \hat{s}(\tau,z)\, \cosh\big(\beta\, \hat{p}(\tau,z)\big)\\
-\lambda\, \partial_\tau \hat{p}(\tau,z) =&\ -\beta^{-1}\, \sinh\big(\beta\, \hat{p}(\tau,z)\big) + \big(J^\lambda*\hat{s}(\tau,\cdot)\big)(z), 
\end{align*}
with
\begin{align*}
    \hat{s}(0,z) =&\ s_0(z)\\
    -\hat{p}(T,z) = &\ g(z).
\end{align*}
 We remark that, in this form, the system can be easily simulated using forward-backward iteration: see Figure \ref{fig:simulation} for some results from simulations.

\begin{figure}
\centering
     \begin{subfigure}[b]{0.48\textwidth}
         \centering
         \includegraphics[width=\textwidth]{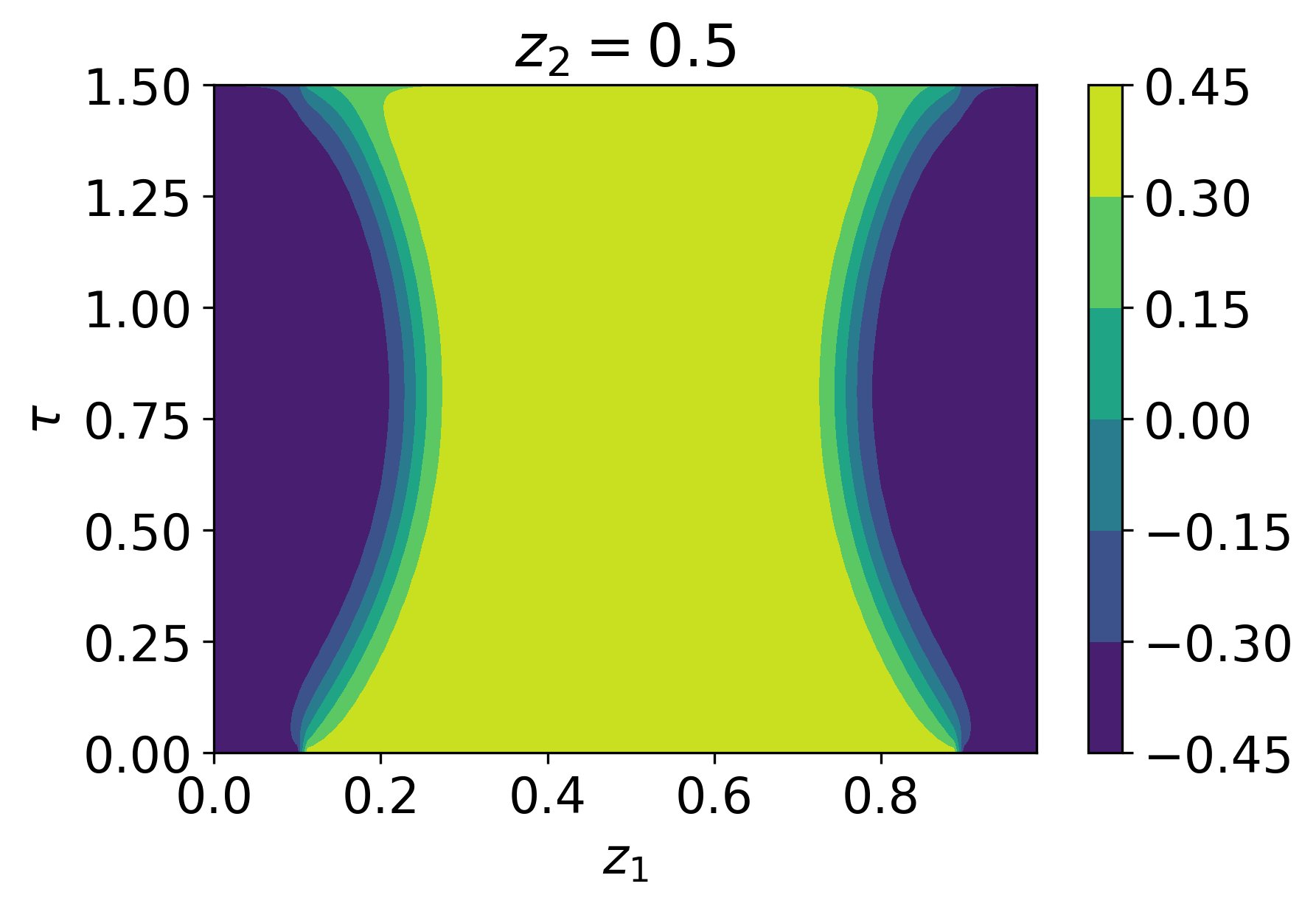}

     \end{subfigure}
     \hfill
     \begin{subfigure}[b]{0.48\textwidth}
         \centering
         \includegraphics[width=\textwidth]{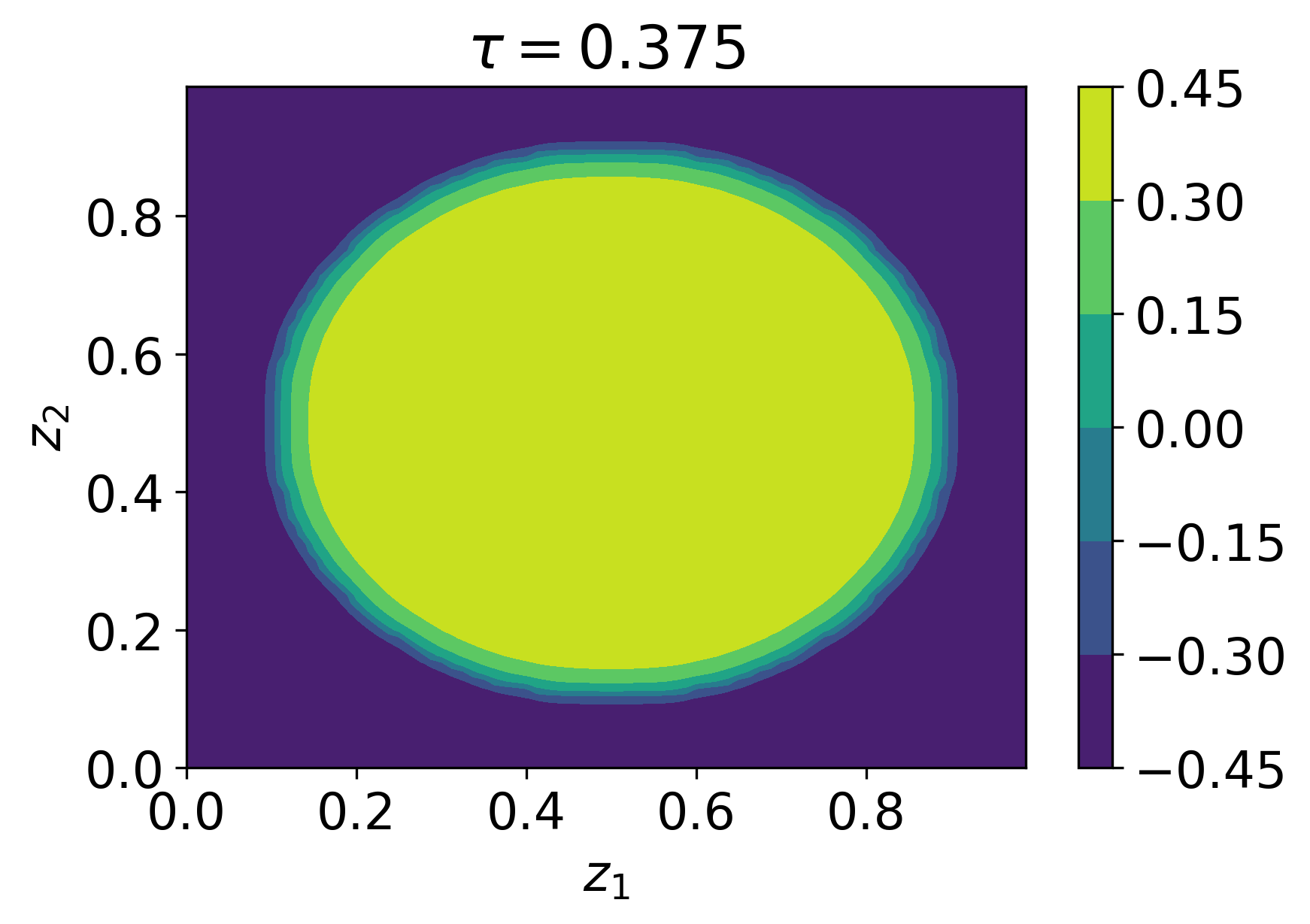}

     \end{subfigure}
    \caption{\label{fig:simulation} A spatial slice and a time slice of a 2D simulation, with $\beta^{-1}=0.9$, $\hat{J}=1$. $T=1.5$, where $J$ is a standard Gaussian and $\lambda = 1/40$. Numerics were implemented using a simple forward-backward iteration of (\ref{eqn:Ising_mfg_system}) with $80^2$ spatial grid points and $300$ time grid points.}
\end{figure}
  
  In the following, we will primarily work in the macroscopic coordinates, and drop the hat from the notation  for $s$, $a$, and $p$.
\subsection{Alternate parameterization and appearance of a double-well potential}\label{sec:alternate}

In order to pass to the macroscopic limit, it is helpful to decompose the cost functional \eqref{eqn:cost} into terms that resemble more closely what has been studied in the literature.  The interaction cost \eqref{eqn:interaction_cost} will be split as a local term and a nonlocal gradient  penalization.  An identical nonlocal term has been studied in \cite{alberti1998non} alongside a double-well potential, and we use this work as a primary guide for our analysis.  The local part of interaction cost combines with the Lagrangian \eqref{eqn:Lagrangian_cost}, and then further decomposes as a double-well potential and a local penalization of the time gradient that is similar to kinetic energy.  The local terms closely relate to the gradient penalizations with double-well potentials that were studied in \cite{modica1987gradient} and many other works, except that we only have the time gradient and no spatial derivatives.  The Ising game can thus be seen as a mixture of the local and nonlocal phase transition models, which is local in the time component and nonlocal in the spatial component.  This mixture introduces many new challenges. However, using the decomposition detailed below, many of the techniques of both the local and nonlocal theory can be adapted for our analysis. 

We expand the interaction cost as
\begin{align*}
    \int_{\mathbb{T}^d}f^\lambda(z,s)\, s(z)\, dz =&\ -\frac{1}{2}\int_{\mathbb{T}^d}\hat{J}\, s(z)^2\, dz + \frac{1}{4}\int_{\mathbb{T}^d}\int_{ \mathbb{T}^d}J^\lambda(w-z)\big(s(w)-s(z)\big)^2dw\, dz.
\end{align*}
The first term may now be combined with the local control cost.  To put this into a more standard form, we express the local terms as a function of the spin field and velocity,
\begin{align}\label{eqn:double_well_velocity}
W(S,V) := \inf_{A_\pm} \Big\{ L(S,A_\pm) - \frac{1}{2}\hat{J}\, S^2 - \bs\Lambda; V = A_-\, \frac{1-S}{2} - A_+\, \frac{1+S}{2}\Big\}.
\end{align}
 At zero velocity, we denote $\mathcal{W}_\beta(S) := W(S,0)$.
When $\beta\, \hat{J}> 1$,  this is a double-well potential
$$
    \mathcal{W}_\beta(S)  = -\frac{1}{\beta}\sqrt{1-S^2} - \frac{1}{2}\hat{J}\, S^2-\bs{\Lambda}
$$
with minimizers at 
$$S=\pm \mathfrak{s}:=\pm\sqrt{1-\beta^{-2}\hat{J}^{-2}}$$
(see \fref{wfig}). These minimizers $\pm\mathfrak{s}$ correspond to the stationary equilibria of the Ising game. Recall that we have normalized by the stationary cost $\bs{\Lambda}$ so that $\mathcal{W}_\beta(\mathfrak{s})=\mathcal{W}_\beta(-\mathfrak{s})=0$. 

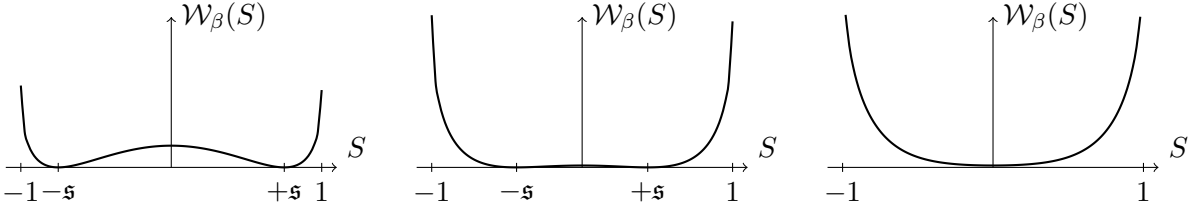
\begin{figure}
   \begin{subfigure}[b]{0.3\textwidth}
\begin{tikzpicture}[scale = 2]
\draw[->] (-1.1,0) -- (1.1,0) node[anchor = south west]{$S$};
\draw[->] (0,0) -- (0,1) node[anchor = west]{$\mathcal{W}_\beta(S)$};
\def\b{.66}
\def\J{1}
\def\L{-.5-.5*\b*\b}
\def\s{{sqrt(1-\b*\b)}}
\draw[black,thick,domain=-1:1, smooth, samples = 80, variable=\x]   plot (\x,{2.5*(-\b*sqrt(1-\x*\x)-.5*\x*\x+.5+.5*\b*\b)}) ;
\draw[] (1,.03)--(1,-.03) node [anchor = north]{$1$};
\draw[] (-1,.03)--(-1,-.03)node [anchor = north]{$-1$};
\draw[] (-\s,.03)--(-\s,-.03)node [anchor = north]{$-\mathfrak{s}$};
\draw[] (\s,.03)--(\s,-.03)node [anchor = north]{$+\mathfrak{s}$};
\end{tikzpicture}
\end{subfigure}
\begin{subfigure}[b]{0.3\textwidth}
\begin{tikzpicture}[scale = 2]
\draw[->] (-1.1,0) -- (1.1,0) node[anchor = south west]{$S$};
\draw[->] (0,0) -- (0,1) node[anchor = west]{$\mathcal{W}_\beta(S)$};
\def\b{.9}
\def\J{1}
\def\L{-.5-.5*\b*\b}
\def\s{{sqrt(1-\b*\b)}}
\draw[black,thick,domain=-1:1, smooth, samples = 80, variable=\x]   plot (\x,{2.5*(-\b*sqrt(1-\x*\x)-.5*\x*\x+.5+.5*\b*\b)}) ;
\draw[] (1,.03)--(1,-.03) node [anchor = north]{$1$};
\draw[] (-1,.03)--(-1,-.03)node [anchor = north]{$-1$};
\draw[] (-\s,.03)--(-\s,-.03)node [anchor = north]{$-\mathfrak{s}$};
\draw[] (\s,.03)--(\s,-.03)node [anchor = north]{$+\mathfrak{s}$};
\end{tikzpicture}
\end{subfigure}
\begin{subfigure}[b]{0.3\textwidth}
\begin{tikzpicture}[scale = 2]
\draw[->] (-1.1,0) -- (1.1,0) node[anchor = south west]{$S$};
\draw[->] (0,0) -- (0,1) node[anchor = west]{$\mathcal{W}_\beta(S)$};
\def\b{1.1}
\def\J{1}
\def\L{-.5-.5*\b*\b}
\def\s{{sqrt(1-\b*\b)}}
\draw[black,thick,domain=-.98:.98, smooth, samples = 80, variable=\x]   plot (\x,{2.5*(-\b*sqrt(1-\x*\x)-.5*\x*\x+.5+.5*\b*\b)}) ;
\draw[] (1,.03)--(1,-.03) node [anchor = north]{$1$};
\draw[] (-1,.03)--(-1,-.03)node [anchor = north]{$-1$};
\end{tikzpicture}
\end{subfigure}
    \caption{Plots of $\mathcal{W}_\beta$ for $\hat{J} = 1$ and for varying values of $\beta^{-1} \in \{.66,.9,1.1\}$ crossing the critical value at $\beta\hat{J} = 1$. 
    \label{f.wfig}}
\end{figure}

The local energy $W$ decomposes into the double-well potential and a convex `Dirichlet'-like energy that is quadratic near $v=0$ and grows superlinearly like $|V|\log|V|$  as $|V|\rightarrow \infty$.  An additional term  $\Phi$ appears, which is a total time derivative and can be integrated out of the cost and incorporated into the boundary conditions. Surprisingly, the total time derivative term encapsulates all of the time asymmetry of the problem.

\begin{proposition}\label{prop:W_decomposition}\label{p.W_decomposition}
The potential $W(S,V)$ decomposes as
\[  W(S,V) = \mathcal{W}_\beta(S) + \frac{1}{2\beta} V\, \Phi'(S)+ \frac{1}{2\beta}\Psi(S,V)\]
where  
\[\Psi(S,V) :=\frac{1}{2 }\int_0^V\frac{(V-Z)}{\sqrt{Z^2+(1-S)(1+S)}} dZ \]
and
\[
\Phi(S) : = (1+S)\log(1+S)+(1-S)\log(1-S).
\]
The decomposition satisfies
\begin{enumerate}
\item $\Psi(S,\cdot)$ is even, strictly convex, $\Psi(S,0) = 0$, monotone increasing on $V \in [0,\infty)$, has the bounds
\begin{equation}\label{estimate}
\Psi(S,V) \sim |V| k\left(\tfrac{V}{\sqrt{(1+S)(1-S)}}\right) \ \hbox{ with } k(r) = \min\{|r|,\log(2+|r|)\}.
\end{equation}
The derivative $\partial_V\Psi(S,V)$ is concave on $V \in [0,\infty)$, zero at $V=0$, hence $V\mapsto \partial_V\Psi(S,V)$ is subadditive, and satisfies the bounds
\begin{equation}\label{estimate2}
\partial_V\Psi(S,V) \sim  k\left(\tfrac{V}{\sqrt{(1+S)(1-S)}}\right) \ \hbox{ for } \ V \geq 0.
\end{equation}
In particular if we define $\Psi_0(V) := \Psi(0,V)$ then $\Psi(S,V) \sim \partial_V\Psi_0(V)$ and $\partial_V\Psi(S,V) \sim \Psi_0(V)$ for $S$ in compact subsets of $(-1,1)$.
\item (double-well coercivity / upper bounds) For $\beta \hat{J}>1$ the potential $\mathcal{W}_\beta(S) \geq 0$ is smooth, symmetric with respect to $0$, and has three critical points in $(-1,1)$ at $\pm\mathfrak{s}$ and $0$ which are, respectively, non-degenerate local minima and a local maximum.  In particular, we have the explicit coercivity with respect to the potential minima
\[ \mathcal{W}_\beta(S) \sim (S \pm \mathfrak{s})^2 \ \hbox{ for $S$ near $\mp \mathfrak{s}$.}\]
\end{enumerate}
\end{proposition}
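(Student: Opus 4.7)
The approach is to first compute $W(S,V)$ in closed form by carrying out the constrained minimization in \eqref{eqn:double_well_velocity} explicitly, and then to decompose the resulting expression algebraically. All stated properties of $\Psi$ and $\mathcal{W}_\beta$ will then follow from calculus applied to the explicit formulas.

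First I would solve for the minimizer in \eqref{eqn:double_well_velocity}. Since $L(S,\cdot)$ is strictly convex with essential singularities at $a_\pm = 0,\infty$, Lagrange multipliers yield a unique optimum of the form $a_+ = e^{-\beta p}$, $a_- = e^{\beta p}$, and substituting into the velocity constraint gives $V = \sinh(\beta p) - S\cosh(\beta p)$. Setting $R := \sqrt{V^2+(1-S)(1+S)}$ and solving the resulting quadratic in $e^{\beta p}$ yields $e^{\beta p} = (V+R)/(1-S)$, and a direct substitution gives
\[ L(S,V) = \tfrac{V}{\beta}\log\tfrac{V+R}{1-S} - \tfrac{R}{\beta}. \]

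Next I would perform the algebraic decomposition. The key identity is
\[ \log\tfrac{V+R}{1-S} = \tfrac{1}{2}\log\tfrac{1+S}{1-S} + \log\tfrac{V+R}{\sqrt{(1-S)(1+S)}} = \tfrac{1}{2}\Phi'(S) + \sinh^{-1}\!\tfrac{V}{\sqrt{(1-S)(1+S)}},\]
which splits $V\log\frac{V+R}{1-S}$ into the claimed $V\Phi'(S)/2$ piece plus a remainder. Adding and subtracting $\beta^{-1}\sqrt{1-S^2}$ to combine with the $-\tfrac{1}{2}\hat{J}S^2 - \bs{\Lambda}$ to form $\mathcal{W}_\beta(S)$, one identifies the residual (up to the prefactor absorbed into $\Psi$) as
\[ V\sinh^{-1}\!\tfrac{V}{\sqrt{1-S^2}} - \sqrt{V^2+1-S^2} + \sqrt{1-S^2}. \]
A differentiation check (or integration by parts) shows this equals the integral expression $\int_0^V(V-Z)/\sqrt{Z^2+(1-S)(1+S)}\, dZ$, yielding the stated formula for $\Psi$.

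For the properties of $\Psi$ in item (1), I would work directly from the integral representation. Since $\partial_V\Psi(S,V)$ is (up to the prefactor) $\sinh^{-1}(V/\sqrt{(1-S)(1+S)})$, it is odd in $V$, zero at $V=0$, strictly increasing in $V$, and positive for $V>0$; this gives evenness of $\Psi$, monotonicity on $[0,\infty)$, and $\Psi(S,0)=0$. Strict convexity of $\Psi(S,\cdot)$ and concavity of $V\mapsto \partial_V\Psi(S,V)$ on $[0,\infty)$ follow from the signs of $\partial_V^2\Psi = \tfrac{1}{2}(Z^2+T^2)^{-1/2}|_{Z=V}>0$ and $\partial_V^3\Psi \leq 0$ respectively, where $T=\sqrt{1-S^2}$. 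Subadditivity of $\partial_V\Psi(S,\cdot)$ on $[0,\infty)$ then follows from concavity plus vanishing at $0$ by a standard chord argument. The bounds \eqref{estimate} and \eqref{estimate2} reduce to the elementary fact that $\sinh^{-1}(r)\sim r$ for $|r|\leq 1$ and $\sinh^{-1}(r)\sim \log|r|$ for $|r|\geq 1$, so that $\partial_V\Psi(S,V)\sim k(V/\sqrt{1-S^2})$; integrating in $V$ yields the analogous bound for $\Psi$.

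Finally, for item (2), the explicit form $\mathcal{W}_\beta(S)=-\beta^{-1}\sqrt{1-S^2} - \tfrac{1}{2}\hat{J}S^2 - \bs{\Lambda}$ is smooth and even on $(-1,1)$. Its derivative
\[ \mathcal{W}_\beta'(S) = S\Big(\tfrac{1}{\beta\sqrt{1-S^2}} - \hat{J}\Big) \]
vanishes at $S=0$ and, precisely when $\beta\hat{J}>1$, at $S=\pm\mathfrak{s}=\pm\sqrt{1-(\beta\hat{J})^{-2}}$. A sign analysis of $\mathcal{W}_\beta'$ identifies $0$ as a local maximum and $\pm\mathfrak{s}$ as the local minima; plugging $\pm\mathfrak{s}$ into the closed form and using the definition of $\bs\Lambda$ gives $\mathcal{W}_\beta(\pm\mathfrak{s})=0$, so $\mathcal{W}_\beta\geq 0$. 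The Taylor expansion $\mathcal{W}_\beta(S)=\tfrac{1}{2}\mathcal{W}_\beta''(\pm\mathfrak{s})(S\mp\mathfrak{s})^2+O(|S\mp\mathfrak{s}|^3)$, with $\mathcal{W}_\beta''(\pm\mathfrak{s})>0$ computed directly, yields the coercivity bound. The main obstacle is purely computational bookkeeping — getting the correct prefactor when matching the closed-form remainder to the integral definition of $\Psi$, and carefully splitting into the $|V|\leq \sqrt{1-S^2}$ and $|V|\geq \sqrt{1-S^2}$ regimes for the $k$-function bounds.
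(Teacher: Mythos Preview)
Your proposal is correct and lands on the same formulas, but the route differs slightly from the paper's. The paper never writes $L(S,V)$ in closed form; instead it differentiates $W(S,V)=L(S,A_\pm(S,V))-\tfrac12\hat{J}S^2-\bs\Lambda$ through the chain rule, computes $\partial_VW(S,0)=\tfrac{1}{2\beta}\Phi'(S)$ and $\partial_V^2W(S,V)=\tfrac{1}{2\beta}(V^2+1-S^2)^{-1/2}$ separately, and then invokes the Taylor integral-remainder formula to define $\Psi$. The bounds on $\Psi$ and $\partial_V\Psi$ are then obtained by splitting the integral into the regimes $|V|\lessgtr\sqrt{1-S^2}$ and estimating the integrand directly.

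Your closed-form approach is a bit more streamlined: writing $L(S,V)=\tfrac{V}{\beta}\log\tfrac{V+R}{1-S}-\tfrac{R}{\beta}$ and recognizing $\log\tfrac{V+R}{\sqrt{1-S^2}}=\sinh^{-1}(V/\sqrt{1-S^2})$ gives $\partial_V\Psi$ explicitly as an inverse hyperbolic sine, from which evenness, convexity, concavity of $\partial_V\Psi$, and the $k$-function asymptotics \eqref{estimate}--\eqref{estimate2} drop out immediately from standard $\sinh^{-1}$ behavior rather than integral estimates. The paper's Taylor-remainder route is slightly more robust in that it never needs the closed form and would generalize more easily to other Lagrangians, while yours is cleaner for this specific entropic $L$. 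Both approaches handle item (2) identically. Your closing caveat about prefactor bookkeeping is apt: the paper itself has a factor-of-$2$ inconsistency between the $\Psi$ in the statement and in its proof, so be careful there.
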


See \sref{sec3proofs} for the proof.

\begin{remark}
    In view of the decomposition of Proposition \ref{p.W_decomposition} it is natural to consider the function space for the spin field $s$ to be
    $$
        W^{1,1}\big((0,T); L^1(\mathbb{T}^d)\big).
    $$
    This is sufficient to make sense of the initial condition and terminal cost in the sense of $L^1$ trace.  Due to the slightly stronger than $L^1$ growth of the time derivative energy, it is straightforward to obtain the existence of minimizers in this space. Also, since the spin field exists in $(-1,1)$ (which will soon be improved to $(-\mathfrak{s},\mathfrak{s})$) the functions are also $L^\infty$.
    
    Later, in Proposition \ref{prop:compactness}, we find that asymptotically the energy also bounds a gradient in the spatial directions, making the natural space for the macroscopic fields ${s}$ that of bounded variation functions on $(0, T)\times \mathbb{T}^d$.
\end{remark}

 Based on the decomposition,  we now introduce a handful of localized quantities. We first localize the energy by defining, for $A\subset  \mathbb{T}^d$  open, 
\begin{align}\label{eqn:local_energy}
    G^\lambda({s},{v}; A) := \lambda^{-1}\, \int_A \Big[\mathcal{W}_\beta\big({s}(z)\big) + \frac{1}{2\beta}  \Psi\big({s}(z),\lambda\, {v}(z)\big) + \frac{1}{4}\int_{ A}J^\lambda(z-w)\big({s}(z)-{s}(w)\big)^2\, dw\Big]dz
\end{align}
We also denote just the local terms in the energy as
\begin{equation}\label{eqn:very_local_energy}
    G^\lambda_{loc}({s},{v}; A) := \lambda^{-1}\, \int_A \left[\mathcal{W}_\beta\big({s}(z)\big) + \frac{1}{2\beta}  \Psi\big({s}(z),\lambda\, {v}(z)\big)\right] dz.
\end{equation}

When comparing localized energies, we must consider the \emph{locality defect}, as in \cite{alberti1998non}, corresponding to the discrepancy in nonlocal terms. For $A,A' \subset \mathbb{T}^d$,
\begin{align}\label{eqn:locality_defect}
    N^\lambda ({s};A,A') := \frac{1}{4\lambda}\textbf{}\int_A\int_{A'}J^\lambda(z-w)\big({s}(z)-{s}(w)\big)^2\, dz\, dw.
\end{align}

When defining the macroscopic costs, it is useful to consider a cost where the nonlocal term is integrated over all of $\R^d$ (where if $s$ is defined on $\T^d$ it can be extended periodically).  That is
$$
F^\lambda({s},{v};A) := \lambda^{-1}\, \int_A \Big[\mathcal{W}_\beta\big({s}(z)\big) + \frac{1}{2\beta}  \Psi\big({s}(z),\lambda\, {v}(z)\big) + \frac{1}{4}\int_{ \R^d}J^\lambda(z-w)\big({s}(z)-{s}(w)\big)^2\, dw\Big]dz.
$$
Clearly, we have
\begin{equation}\label{FGN}
    F^\lambda({s},{v};A) =  G^\lambda({s},{v};A) + N^\lambda({s}; A, \R^d\backslash A).
\end{equation}

We also consider time-integrated versions of the above quantities.  We will take the convention of naming the time-integrated energies with calligraphic font.  If $A\subset \R^{d+1}$ we let $A_\tau$ denote the time slices of $A$ and $\tau_1$ and $\tau_2$ be the lower and upper bounds in time.  Then

\begin{align}\label{eqn:space_time_energies}
    \mathcal{G}^\lambda({s}; A) &:= \int_{\tau_1}^{\tau_2} G^\lambda\big({s}(\tau,\cdot),\lambda\, \partial_\tau {s}(\tau,\cdot); A_\tau\big) d\tau\\
    \mathcal{G}^\lambda_{loc}({s}; A) &:= \int_{\tau_1}^{\tau_2} G^\lambda_{loc}\big({s}(\tau,\cdot),\lambda\, \partial_\tau {s}(\tau,\cdot); A_\tau\big) d\tau\nonumber\\
    \mathcal{N}^\lambda({s}; A,A') &:= \int_{\tau_1}^{\tau_2} N^\lambda\big({s}(t,\cdot); A_\tau,A_\tau'\big) d\tau\nonumber\\
    \mathcal{F}^\lambda({s}; A) &:= \int_{\tau_1}^{\tau_2} F^\lambda\big({s}(\tau,\cdot),\lambda\, \partial_\tau {s}(\tau,\cdot); A_\tau\big) d\tau.\nonumber
\end{align}

In terms of these definitions, we can reformulate the cost only in terms of the spin field $s$. 
\begin{corollary}\label{decomposition}
Assuming that the controls $a_\pm$ are optimal given $\partial_t s(t,x)$, we can write
\begin{equation}\label{cost:decomposition}
    \mathcal{C}^\lambda\big({s},{a}_\pm)=\mathcal{G}^\lambda(\hat{s};(0,T)\times \T^d) + \int_{\mathbb{T}^d} \Big[g(z)\,\hat{s}(T,z) + \frac{1}{2\, \beta} \Phi\big(\hat{s}(T,z)\big) - \frac{1}{2\, \beta} \Phi\big(\hat{s}(0,z)\big) \Big] dz.
\end{equation}
\end{corollary}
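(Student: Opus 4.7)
The plan is to establish the identity by a direct algebraic manipulation combining three already-established ingredients: the optimality of $a_\pm$ (which identifies the local part of the integrand with the minimum-value function $W$), the decomposition of the interaction cost $\int f^\lambda(z, s)\, s(z)\, dz$ carried out at the beginning of Section~\ref{sec:alternate}, and the pointwise identity for $W$ from Proposition~\ref{p.W_decomposition}. The key observation that makes the identity work cleanly is that one of the three pieces in the $W$-decomposition, namely $\Phi'(s)\,\partial_\tau s$, is a perfect time derivative and therefore contributes only on the initial and terminal time slices.

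Concretely, I would first rewrite the nonlocal part of the cost at each fixed $\tau$ as
$$\int_{\T^d} f^\lambda(z, s(\tau, \cdot))\, s(\tau, z)\, dz = -\frac{\hat{J}}{2}\int_{\T^d} s^2\, dz + \frac{1}{4}\int_{\T^d}\int_{\T^d} J^\lambda(z - w)\big(s(z) - s(w)\big)^2 dw\, dz,$$
using periodicity. Grouping the local quadratic piece $-\tfrac{\hat{J}}{2}s^2$ with $L(s, a_\pm) - \bs\Lambda$, the assumed optimality of $a_\pm$ subject to the velocity constraint~(\ref{eqn:evolution}) replaces the combined local integrand pointwise by $W\big(s, \lambda \partial_\tau s\big)$, by the very definition~(\ref{eqn:double_well_velocity}).

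Next I would apply Proposition~\ref{p.W_decomposition} to write
$$W(s, \lambda \partial_\tau s) = \mathcal{W}_\beta(s) + \frac{\lambda}{2\beta}\Phi'(s)\,\partial_\tau s + \frac{1}{2\beta}\Psi\big(s, \lambda \partial_\tau s\big).$$
The middle summand equals $\tfrac{\lambda}{2\beta}\partial_\tau \Phi(s)$ by the chain rule. After multiplying the full space-time integrand by $\lambda^{-1}$ and integrating in $(\tau, z)$, this middle piece produces, via Fubini and the fundamental theorem of calculus in $\tau$, exactly
$$\frac{1}{2\beta}\int_{\T^d}\big[\Phi(s(T, z)) - \Phi(s(0, z))\big] dz,$$
the new boundary contribution in~(\ref{cost:decomposition}). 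The remaining three summands --- $\lambda^{-1}\mathcal{W}_\beta(s)$, $\tfrac{1}{2\beta\lambda}\Psi(s, \lambda \partial_\tau s)$, and the rescaled nonlocal term $\tfrac{1}{4\lambda}\int_{\T^d} J^\lambda(z - w)(s(z) - s(w))^2 dw$ --- assemble exactly into the integrand of $G^\lambda\big(s, \lambda \partial_\tau s; \T^d\big)$ from~(\ref{eqn:local_energy}), whose time integral is $\mathcal{G}^\lambda\big(s; (0, T) \times \T^d\big)$ by~(\ref{eqn:space_time_energies}). Adding back the unchanged terminal contribution $\int g(z) s(T, z) dz$ yields the stated identity.

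There is no substantive obstacle; the derivation is essentially bookkeeping, and the entire proof reduces to organizing these three ingredients. The only fine point worth a brief remark is the legitimacy of the chain rule $\partial_\tau \Phi(s) = \Phi'(s)\partial_\tau s$ and of the fundamental theorem of calculus in $\tau$, which is justified in the natural function space $W^{1,1}\big((0, T); L^1(\T^d)\big)$ flagged after Proposition~\ref{p.W_decomposition}, since $\Phi$ is $C^1$ on $(-1, 1)$ and spin fields of finite energy take values strictly inside $[-1, 1]$.
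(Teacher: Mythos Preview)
Your proposal is correct and follows exactly the approach implicit in the paper. The paper does not supply an explicit proof of this corollary, presenting it instead as an immediate consequence of the decomposition of the interaction cost at the start of Section~\ref{sec:alternate} together with Proposition~\ref{p.W_decomposition}; your write-up simply spells out that reasoning in full, including the integration of the total-derivative term $\tfrac{1}{2\beta}\Phi'(s)\,\partial_\tau s$ to produce the boundary contributions.
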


\subsection{Asymptotic heuristics and preliminary results}
We assume that $\beta \hat{J} > 1$, for which there are two stable long time equilibria, $\mathfrak{s}= \sqrt{1-\beta^{-2}J^{-2}}$ and $-\mathfrak{s}$, with cost $\bs\Lambda = -\frac{\hat{J}}{2}-\frac{1}{2\, \beta^2\, \hat{J}}$, as shown in Proposition~\ref{prop:W_decomposition}. Corresponding to each equilibrium there are unique controls given by the constrained minimization procedure of Proposition \ref{prop:W_decomposition} at zero velocity, $A_\pm(\mathfrak{s},0)$ and $A_\pm(-\mathfrak{s},0)$.

The stable equilibria correspond to the leading asymptotic term of the cost that is canceled by the $\bs\Lambda$ in the definition (\ref{eqn:cost}) of $\mathcal{C}^\lambda$.

These results are summarized by the following proposition.
\begin{proposition}\label{prop:const_optimality}
   Assume that $J(x)\geq 0$ for all $x\in \R^d$ and $\beta\, \hat{J}>1$. Then the constant solutions $(\mathfrak{s},A_\pm(\mathfrak{s},0))$ and $(-\mathfrak{s},A_\pm(-\mathfrak{s},0))$ are globally optimal in the sense that if ${s}(0,x)={s}(\lambda^{-1} T,x)=\mathfrak{s}$ for all $x\in \lambda^{-1}\mathbb{T}^d$ , then
    $$
        \mathcal{C}^\lambda\big({s},{a}_\pm) \geq \mathcal{C}^\lambda\big(\mathfrak{s},A_\pm(\mathfrak{s},0)\big),
    $$
    and the same holds with $(\mathfrak{s},A_\pm(\mathfrak{s},0))$ replaced by $(-\mathfrak{s},A_\pm(-\mathfrak{s},0))$.
    Equivalently,
    $$
        \mathcal{G}^\lambda\big(\hat{s};(0,T)\times \T^d\big) \geq 0.
    $$
\end{proposition}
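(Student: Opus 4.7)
The plan is to apply Corollary~\ref{decomposition} to rewrite the macroscopic cost as
$\mathcal{C}^\lambda(\hat s, \hat a_\pm) = \mathcal{G}^\lambda(\hat s;(0,T)\times\T^d) + \mathcal{B}[\hat s(0,\cdot),\hat s(T,\cdot)]$,
where the boundary functional $\mathcal{B}$ depends only on the traces of $\hat s$ at $\tau=0$ and $\tau=T$. With both endpoint profiles pinned to $\mathfrak{s}$, the boundary term $\mathcal{B}$ takes the identical value $\mathfrak{s}\int_{\T^d} g\,dz$ on every admissible competitor---in particular on the constant solution itself, since the two $\Phi$ contributions at $T$ and $0$ then cancel---so proving $\mathcal{C}^\lambda(\hat s, \hat a_\pm) \geq \mathcal{C}^\lambda(\mathfrak{s}, A_\pm(\mathfrak{s},0))$ reduces exactly to the stated inequality $\mathcal{G}^\lambda(\hat s;(0,T)\times\T^d) \geq 0$.

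To verify the latter, I would read non-negativity off the three pieces of the integrand in \eqref{eqn:local_energy} individually: the double-well piece $\mathcal{W}_\beta(\hat s) \geq 0$ by Proposition~\ref{p.W_decomposition}(2) (this is where the hypothesis $\beta\hat J > 1$ enters); the kinetic piece $\Psi(\hat s, \lambda\partial_\tau \hat s) \geq 0$ because $V\mapsto \Psi(\hat s, V)$ is even and convex with $\Psi(\hat s,0) = 0$ by Proposition~\ref{p.W_decomposition}(1); and the nonlocal piece $\tfrac14\iint J^\lambda(z-w)(\hat s(z)-\hat s(w))^2\,dw$ is manifestly non-negative since $J\geq 0$ by \aref{Jpos} and the integrand is a perfect square. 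Summing and integrating in $\tau$ gives $\mathcal{G}^\lambda \geq 0$, with equality when $\hat s \equiv \mathfrak{s}$ since each of the three pieces vanishes identically in that case. The argument for $-\mathfrak{s}$ is identical.

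One point that deserves care is that Corollary~\ref{decomposition} is stated under the assumption that $\hat a_\pm$ is optimal for the given $\partial_\tau \hat s$, whereas the proposition is asserted for every admissible $(\hat s, \hat a_\pm)$. To handle general controls I would return to the defining minimization \eqref{eqn:double_well_velocity} of $W$, which yields the pointwise inequality $L(\hat s, \hat a_\pm) - \tfrac12\hat J\,\hat s^2 - \bs\Lambda \geq W(\hat s, \lambda\partial_\tau \hat s)$ for any admissible pair, then substitute the decomposition of Proposition~\ref{p.W_decomposition} and integrate the exact derivative $\tfrac{1}{2\beta}\partial_\tau \hat s\cdot\Phi'(\hat s) = \tfrac{1}{2\beta}\partial_\tau \Phi(\hat s)$ in $\tau$. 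This reproduces the identity of Corollary~\ref{decomposition} as an inequality, which is all that is needed. No serious obstacle is expected; the proposition is essentially a sign bookkeeping exercise once the decomposition of Proposition~\ref{p.W_decomposition} and the assumption $J \geq 0$ are in hand.
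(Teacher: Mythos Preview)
Your proposal is correct and follows essentially the same route as the paper: invoke the decomposition of Proposition~\ref{p.W_decomposition} (via Corollary~\ref{decomposition}) and read off non-negativity of $\mathcal{G}^\lambda$ from the non-negativity of each of $\mathcal{W}_\beta$, $\Psi$, and the nonlocal square term. Your treatment is in fact more careful than the paper's, which leaves the boundary-term cancellation and the passage from general to optimal controls implicit.
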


See Section 3.5 for the proof, which follows directly from Proposition \ref{p.W_decomposition}.

The spin fields may be restricted to take values in the interval $[-\mathfrak{s},\mathfrak{s}]$.  We assume that the initial and final data $s_0$ and $g$ respect this condition as well. This assumption is probably not truly required, since the solution should be approximately in the interval $[-\mathfrak{s},\mathfrak{s}]$ outside of some initial/final layers.

\begin{lemma}\label{lem:bdd}
Assume that $J(x)\geq 0$ for all $x\in \R^d$ and $\beta\, \hat{J}>1$.
In addition, suppose that $s(0,z)\in [-\mathfrak{s},\mathfrak{s}]$ for all $z\in  \mathbb{T}^d$ and that $|g(z)|\leq \frac{1}{2\beta}\Phi'(\mathfrak{s})$. Then the cut-off function 
$$
    {s}^{b}(\tau, z) := \max\big\{\min\big\{s(\tau,z),\mathfrak{s}\big\}, -\mathfrak{s}\big\} \quad \hbox{ in } [0,T]\times \mathbb{T}^d
$$
satisfies
\begin{align*}
 &\ \mathcal{G}^\lambda\big(s^b;(0,T)\times \T^d\big) +\int_{\T^d} \Big[s^b(T,z)\, g(z) + \frac{1}{2\beta} \Phi\big(s^b(T,z)\big)\Big]dz\\ \leq&\  \mathcal{G}^\lambda\big(s;(0,T)\times \T^d\big)+ \int_{\T^d}\Big[s(T,z)\, g(z)+\frac{1}{2\beta}\Phi\big(s(T,z)\big)\Big]dz.
\end{align*}
\end{lemma}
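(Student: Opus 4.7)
The plan is to show that each piece of the cost decreases separately when $s$ is replaced by the truncation $s^b$. The cost splits into three contributions on $\mathcal{G}^\lambda$ plus one boundary term at $\tau=T$.

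First, for the double-well $\mathcal{W}_\beta$: from the explicit formula $\mathcal{W}_\beta(S) = -\beta^{-1}\sqrt{1-S^2} - \tfrac{1}{2}\hat J S^2 - \bs\Lambda$ we compute $\mathcal{W}_\beta'(S) = S\big(\tfrac{1}{\beta\sqrt{1-S^2}} - \hat J\big)$, which vanishes at $S=\pm\mathfrak{s}$ and is strictly positive for $S\in(\mathfrak{s},1)$ and strictly negative for $S\in(-1,-\mathfrak{s})$. Since $\mathcal{W}_\beta(\pm\mathfrak{s})=0$, we get $\mathcal{W}_\beta(s^b(\tau,z)) \le \mathcal{W}_\beta(s(\tau,z))$ pointwise, and equality on $\{|s|\le\mathfrak{s}\}$ where $s^b=s$. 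For the kinetic term, the Stampacchia chain rule gives $\partial_\tau s^b = \partial_\tau s$ a.e.\ on $\{|s|<\mathfrak{s}\}$ and $\partial_\tau s^b = 0$ a.e.\ on $\{|s|>\mathfrak{s}\}$; combined with $\Psi(S^b,0)=0$ (item (1) of Proposition \ref{p.W_decomposition}) and $\Psi\ge 0$, this yields $\Psi(s^b,\lambda\partial_\tau s^b) \le \Psi(s,\lambda\partial_\tau s)$ pointwise a.e. For the nonlocal term, the truncation map $t\mapsto \max(\min(t,\mathfrak{s}),-\mathfrak{s})$ is $1$-Lipschitz, so $|s^b(z)-s^b(w)|\le|s(z)-s(w)|$ pointwise and therefore
\[
\tfrac{1}{4}\int\!\!\int J^\lambda(z-w)(s^b(z)-s^b(w))^2\,dz\,dw \;\le\; \tfrac{1}{4}\int\!\!\int J^\lambda(z-w)(s(z)-s(w))^2\,dz\,dw.
\]
Integrating in $\tau$ gives $\mathcal{G}^\lambda(s^b;(0,T)\times\T^d) \le \mathcal{G}^\lambda(s;(0,T)\times\T^d)$.

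Next, for the terminal boundary term, set $h(S):=g(z)S + \tfrac{1}{2\beta}\Phi(S)$ (with $z$ fixed). Since $\Phi'$ is strictly increasing on $(-1,1)$ (as $\Phi''(S) = \tfrac{2}{1-S^2}>0$), the hypothesis $|g(z)|\le\tfrac{1}{2\beta}\Phi'(\mathfrak{s})$ gives $h'(\mathfrak{s}) = g(z)+\tfrac{1}{2\beta}\Phi'(\mathfrak{s})\ge 0$ and $h'(-\mathfrak{s}) = g(z)-\tfrac{1}{2\beta}\Phi'(\mathfrak{s})\le 0$. Hence $h$ is nondecreasing on $[\mathfrak{s},1]$ and nonincreasing on $[-1,-\mathfrak{s}]$, so $h(s^b(T,z))\le h(s(T,z))$ pointwise in $z$. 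Integrating over $\T^d$ gives the boundary inequality, and combining with the first paragraph yields the claim.

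The only step with any subtlety is the chain-rule identification of $\partial_\tau s^b$, but this is a standard consequence of the Sobolev regularity of $s$ in time; everything else reduces to pointwise monotonicity and the $1$-Lipschitz property of truncation. Note that the initial trace term $-\tfrac{1}{2\beta}\Phi(s(0,\cdot))$ that would appear via Corollary \ref{decomposition} is unchanged because the hypothesis $s(0,\cdot)\in[-\mathfrak{s},\mathfrak{s}]$ ensures $s^b(0,\cdot)=s(0,\cdot)$, which is why only the terminal boundary term appears in the statement.
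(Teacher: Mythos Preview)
Your proof is correct and takes a genuinely different, more elementary route than the paper. The paper argues by introducing both $s^{b-}=\min\{s,\mathfrak{s}\}$ and $s^{b+}=\max\{s,\mathfrak{s}\}$, exploits the algebraic identity $s^{b-}+s^{b+}=s+\mathfrak{s}$ to obtain a ``sublinearity'' inequality for the interaction term in the form $s\,J^\lambda*s \le s^{b-}J^\lambda*s^{b-}+s^{b+}J^\lambda*s^{b+}-\hat J\mathfrak{s}^2$, deduces $\mathcal{G}^\lambda(s)\ge \mathcal{G}^\lambda(s^{b-})+\mathcal{G}^\lambda(s^{b+})$, and then invokes the global optimality of the constant $\mathfrak{s}$ (Proposition~\ref{prop:const_optimality}) to discard $\mathcal{G}^\lambda(s^{b+})\ge 0$. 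Your argument instead compares each term of the decomposed integrand pointwise: the $1$-Lipschitz property of truncation handles the nonlocal difference-squared term in one line, and the monotonicity of $\mathcal{W}_\beta$ outside $[-\mathfrak{s},\mathfrak{s}]$ plus $\Psi(\cdot,0)=0$ handle the local terms. This is shorter, avoids the auxiliary function $s^{b+}$ entirely, and does not need to appeal to Proposition~\ref{prop:const_optimality}. The paper's splitting, on the other hand, has the minor conceptual advantage that the local identity $\mathcal{W}_\beta(s)+\tfrac{1}{2\beta}\Psi(s,\cdot)=\sum_\pm[\mathcal{W}_\beta(s^{b\pm})+\tfrac{1}{2\beta}\Psi(s^{b\pm},\cdot)]$ is an exact equality rather than an inequality, but this extra precision is not used anywhere.
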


We refer again to Section 3.5 for the proof of the above lemma.

 \subsection{Macroscopic Energy}
Let  $\bar{s}$ denote a macroscopic field defined in $(0,T)\times \T^d$, which takes values in the equilibria $\{- \mathfrak{s},\mathfrak{s}\}$ almost everywhere with a discontinuity along some $d-1$ dimensional interface.
In the next section,  we will prove that for minimizers $(s^\lambda,a^\lambda_\pm)$
\begin{align}\label{eqn:asymptotics}
    \lim_{\lambda\rightarrow^+0} \mathcal{C}^\lambda\big(s^\lambda,a_\pm^\lambda\big) =  \inf_{\bar{s}\in BV((0,T)\times \T^d)}\Big\{ \bar{V}(s_0,g,\bar{s})\Big\},
\end{align}
where $\bar{V}$ is the effective cost which we will make precise shortly. This result follows from a more general result in the framework of $\Gamma$-convergence, and helps to characterize asymptotically the minimizers $(s^\lambda,a^\lambda_\pm)$ in the sense that, passing to a subsequence,
$$
\lim_{\lambda\rightarrow 0^+} \hat{s}^\lambda = \bar{s}\in {\rm argmin}_{\bar{s}\in BV((0,T)\times \T^d)}\Big\{ \bar{V}(s_0,g,\bar{s})\Big\}.
$$

  The time scale (in the macroscopic scale) of convergence to the equilibrium is $O(\lambda)$, and the boundary layer terms at the initial and final times correspond to solutions to `infinite time horizon' problems where the spatial interactions are small.   There is also a boundary layer coming from the deviation from the long-time equilibria, $\{-\mathfrak{s},\mathfrak{s}\}$, in a distance $O(\lambda)$ from an interface of $d-1$ dimensions, corresponding to solutions of a `traveling wave'-type cell problem. See \fref{lambda-scale-fig}.

\medskip

The energy $\bar{V}$ will be interfacial, i.e. $\bar{V}(s_0,g,\bar{s}) = +\infty$ unless $\bar{s}(\tau,z)\in \{\pm \mathfrak{s}\}$ for almost every $(\tau,z)$. We denote by $BV((0,T)\times \mathbb{T}^d; \{\pm \mathfrak{s}\})$ the set of bounded variation functions that take values in $\{\pm \mathfrak{s}\}$.  The initial and final values $\bar{s}(0,\cdot)$ and $\bar{s}(T,\cdot)$ can be understood in the sense of $BV$ trace, and both take values in $\{\pm \mathfrak{s}\}$ (see for example Theorem 5.6 of \cite{evans2018measure} and consider that the traces at time $\delta_i>0$ converge in $L^1$ as $\delta_i\rightarrow^+0$ implying that the limit trace will take values in  $\{\pm \mathfrak{s}\}$). Let $\partial_*\{s = \mathfrak{s}\} \cap \{0 < \tau < T\}$ denote the essential boundary of the positive phase region in $(0,T)$, i.e. the phase interface. On the interface, let $\nu(\tau,z)$ denote the measure theoretic unit normal pointing from where $\bar{s}=-\mathfrak{s}$ to where $\bar{s}=\mathfrak{s}$, i.e. $\nu = D\bar{s}/|D\bar{s}|$ the Radon-Nikodym derivative. 

The macroscopic cost $\bar{V}$ is defined as
\begin{align}\label{eqn:bar_V}
    \bar{V}(s_0,g,\bar{s}) :=  \int_{\mathbb{T}^d}V^{init}\big({s}_0(z),\bar{s}(0,z)\big)dz+\int_{\mathbb{T}^d}V^{end}\big(\bar{s}(T,z),g(z)\big)dz +\int_{\Sigma} \bar{L}\big(\nu(\tau,z)\big) d\mathcal{H}^d.
\end{align}
The cost term $V^{init}$ incorporates the initial condition $s_0$.  The initial and terminal boundary layer reduces to an optimization on the mixed scale that is microscopic in time and macroscopic in space.

\begin{figure}
    \centering
    \begin{subfigure}[b]{0.4\textwidth}
         \centering
    \includegraphics[width=\textwidth]{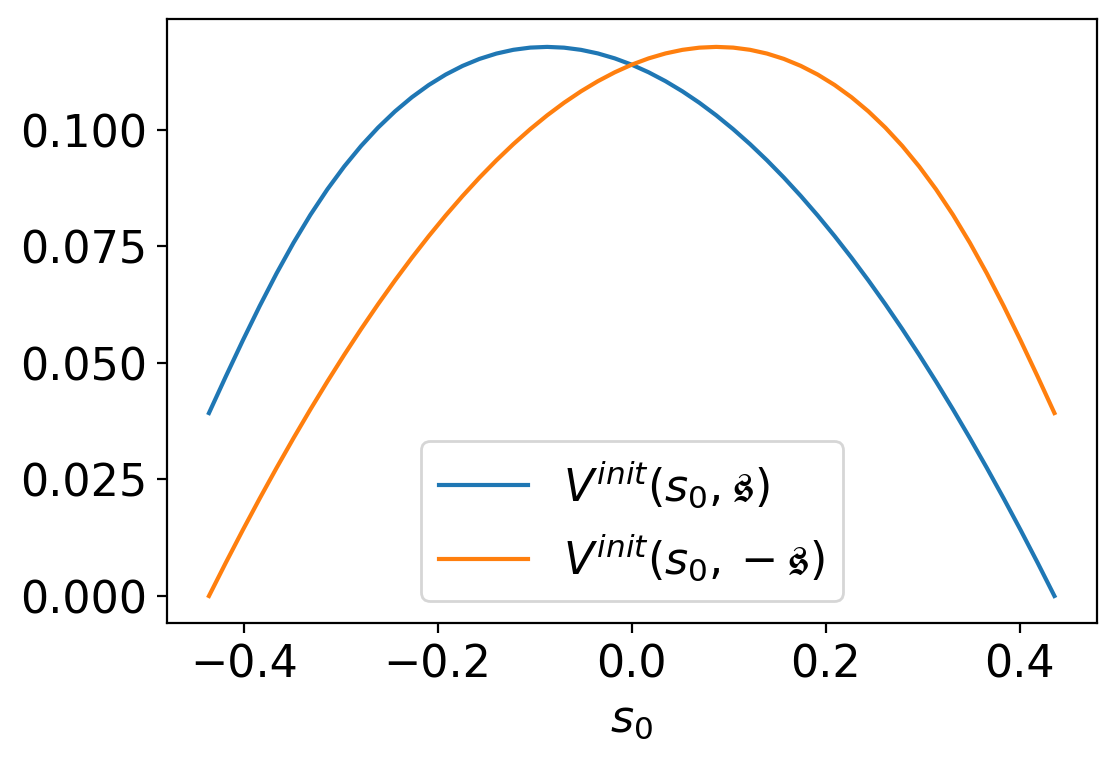}
    \end{subfigure}
    \hfill
     \begin{subfigure}[b]{0.4\textwidth}
         \centering
         \includegraphics[width=\textwidth]{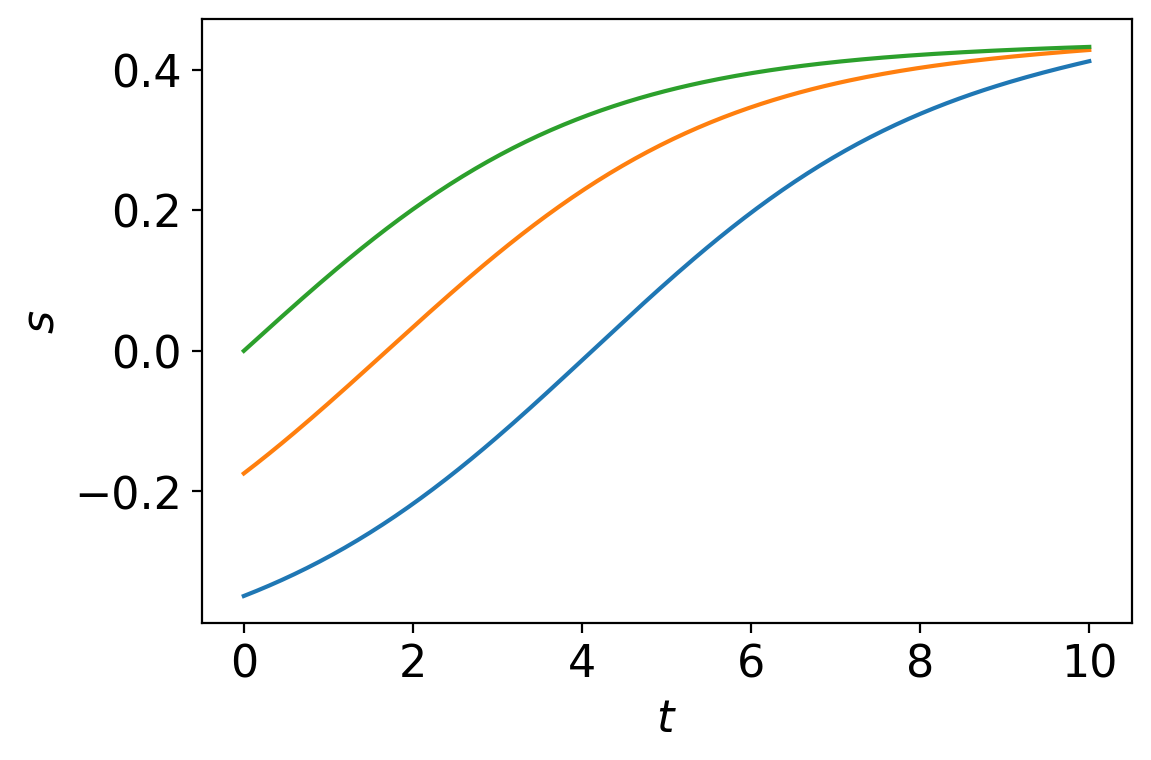}
     \end{subfigure}
    \caption{A plot of $V^{init}$ on the interval $[-\mathfrak{s},\mathfrak{s}]$, with $\beta=0.9^{-1}$ and $\hat{J}=1$ on the left.  On the right, three of one-dimensional solutions for the mean spin field.}
\end{figure}

We now proceed to define formally the macroscopic energy in the interior. In this section, we will further characterize the initial and end costs and characterize heuristically the interfacial cost. 

\medskip

Following the general proof of \cite{alberti1998non}, we consider the localized unscaled energy functional, $\mathcal{F}^1$, defined in (\ref{eqn:space_time_energies}).  We define the rescaling $R_{(\tau,z),r}$ of $s$ to be
    \begin{equation}\label{rescaled_s}
        R_{(\tau,z),r}s (t,x) := s(\tau+r\, t,z+r\, x).
    \end{equation}
Recall that we can decompose $\mathcal{F}^\lambda$ to 
$$
    \mathcal{F}^\lambda(s;A) = \int_{\tau_1}^{\tau_2}\Big(G^\lambda\big(s(\tau,\cdot),\lambda\, \partial_\tau s(\tau,\cdot);A_\tau\big) + N^\lambda \big({s}(\tau,\cdot);A_\tau,\R^d\backslash A_\tau\big)\Big)d\tau,
$$
where $G^\lambda$ and $N^\lambda$ are defined in \eqref{eqn:local_energy} and \eqref{eqn:locality_defect}.
Both $F^\lambda$ and $G^\lambda$ satisfy the following scaling identity:
\begin{lemma}\label{lem:scaling_identity}
For every set $A\in \R^{d}$, we set $z + r\, A = \{z + r\, x:x\in A\}$, and we have
\begin{align}\label{eqn:scaling_local_energy_G}
G^\lambda\big(s(\tau,\cdot),\lambda\, \partial_\tau s(\tau,\cdot);z + r\,  A\big) = r^{d-1}G^{\lambda/r} \big(R_{(0,z),r}s(\tau/r,\cdot),\lambda\, \partial_\tau R_{(0,z),r}s(\tau/r,\cdot);A)
\end{align}
and for every set $B\in \R^{d+1}$, we set $(\tau,z) + r\, B = \{(\tau,z) + r\, (t,x):(t,x)\in B\}$, and we have
\begin{align}\label{eqn:scaling_local_energy_F}
    \mathcal{F}^\lambda\big(s;(\tau,z) + r \, B\big) = r^{d}\mathcal{F}^{\lambda/r} \big(R_{(\tau,z),r}s;B).
\end{align}
\end{lemma}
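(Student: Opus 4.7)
The plan is to verify both scaling identities by direct change of variables, treating separately each of the three summands appearing in the integrand of $G^\lambda$ (and hence of $F^\lambda$): the double-well $\mathcal{W}_\beta$, the convex running cost $\Psi$, and the nonlocal double integral with kernel $J^\lambda$.

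For \eqref{eqn:scaling_local_energy_G}, I would apply the substitution $z' = z + ry$ in the outer integral over $z+rA$, which produces a Jacobian $r^d$, together with $w' = z + ry'$ in the inner nonlocal integral, producing another factor $r^d$. The decisive algebraic identity for the kernel is
\[ J^\lambda\bigl(r(y-y')\bigr) = \lambda^{-d}\, J\bigl((\lambda/r)^{-1}(y-y')\bigr) = r^{-d}\, J^{\lambda/r}(y-y'), \]
which is immediate from the definition $J^\lambda(w)=\lambda^{-d}J(\lambda^{-1}w)$. Thus the nonlocal piece accumulates a net factor $r^{2d}\cdot r^{-d}=r^d$, while the $\mathcal{W}_\beta$ piece accumulates $r^d$ from the outer Jacobian alone. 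Combining with the prefactor and rewriting $\lambda^{-1} r^d = r^{d-1}(\lambda/r)^{-1}$ yields the claimed overall factor $r^{d-1}$.

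The subtlest summand is $\Psi$, because it depends nonlinearly on its second argument. Here the chain rule gives
\[ \partial_t R_{(0,z),r}s(t,x) = r\, \partial_\tau s(rt,\, z + rx), \]
so $\lambda\, \partial_t R_{(0,z),r}s(\tau/r, x) = r\lambda\, \partial_\tau s(\tau, z+rx)$. Inside $G^{\lambda/r}$ this velocity argument is further multiplied by the new small parameter $\lambda/r$, so the two factors of $r$ cancel cleanly, producing $\Psi\bigl(s(\tau, z+rx),\, \lambda^2\, \partial_\tau s(\tau, z+rx)\bigr)$, which matches the LHS after the spatial change of variables.

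For \eqref{eqn:scaling_local_energy_F}, the inner integral of the nonlocal part runs over all of $\R^d$, which is invariant under the dilation $w'=z+ry'$, so exactly the same spatial argument applies to the integrand. The additional ingredient is the time substitution $t = \tau + r\sigma$ in the outer time integration, contributing $dt = r\, d\sigma$; combined with the $r^{d-1}$ from the spatial scaling this yields the advertised $r^d$. The main (mild) obstacle throughout is bookkeeping the chain-rule factor of $r$ in the time derivative against the replacement $\lambda \mapsto \lambda/r$ inside $\Psi$, which was the only nontrivial cancellation above; everything else is standard Fubini together with the homogeneity of $J^\lambda$.
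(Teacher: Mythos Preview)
Your proposal is correct and follows essentially the same route as the paper: a direct change of variables in each summand, using $J^\lambda(r\,\cdot)=r^{-d}J^{\lambda/r}(\cdot)$ for the nonlocal part and the chain-rule cancellation $(\lambda/r)\cdot r\,\partial_\tau s=\lambda\,\partial_\tau s$ for the $\Psi$ part. The paper's proof compresses the local terms into the single function $W(S,V)$ rather than treating $\mathcal W_\beta$ and $\Psi$ separately, but the computation is identical in substance.
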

\begin{proof}
We calculate directly
\begin{align*}
&\ G^\lambda\big(s(\tau,\cdot),\lambda\, \partial_\tau s(\tau,\cdot);z + r\,  A\big)\\
=&\ \int_{r\, A}\Big[ \frac{1}{\lambda}W\big(s(\tau, z +x),\lambda\, \partial_\tau s(\tau,z+x)\big) + \frac{1}{4\, \lambda}\int_{r\, A}J^\lambda(x-y)\big(s(\tau,z+x)-s(\tau,z+y)\big)^2dy\Big]dx\\
=&\ r^{d-1}\int_A\Big[ \frac{r}{\lambda}W\big(s(\tau, z +r\, \tilde{x}),\frac{\lambda}{r}\, r\, \partial_\tau s(\tau,z+r\tilde{x})\big) + \frac{r}{4\, \lambda}\int_{ A}J^\frac{\lambda}{r}(\tilde{x}-\tilde{y})\big(s(\tau,z+r\, \tilde{x})-s(\tau,z+r\, \tilde{y})\big)^2d\tilde{y}\Big]d\tilde{x}\\
=&\  r^{d-1}G^{\lambda/r} \big(R_{(0,z),r}s(\tau/r,\cdot),\lambda\, \partial_\tau R_{(0,z),r}s(\tau/r,\cdot);A).
\end{align*}
Including the time variable,  (\ref{eqn:scaling_local_energy_F}) follows with the additional factor of $r$ from the time integral.
\end{proof}

We now use the unscaled energy $\mathcal{F}^1$ to identify the form of the macroscopic costs by a ``cell problem", namely with test functions as periodic functions on the tangential plane of a normal direction $\nu$.  We follow nearly the same definitions as \cite{alberti1998non} for $\bar{L}(\nu)$, but here we have space-time normal $\nu$ in contrast to the spatial setting in \cite{alberti1998non}. Another new feature is the addition of a width parameter $R\in \R^+$ in the function class, that corresponds essentially to compactly supported variation from function values of $\{-\mathfrak{s},\mathfrak{s}\}$.  This compactness is helpful to restrict our arguments to near the interface, e.g., within a distance $\lambda\, R$ that will become small as $\lambda\rightarrow ^+ 0$; it was not needed in \cite{alberti1998non} due to the simpler nature of patching in their problem.  We then extend these definitions to also apply to the initial and end times, where we impose the additional initial condition as a constraint and the terminal cost in the energy.

\medskip

For a unit-normal vector $\nu$ in $\R^{d+1}$ we define $\blacksquare_\nu$ to be the set of all $d$-dimensional cubes centered at the origin and orthogonal to $\nu$.  For $\square\in \blacksquare_\nu$, we let $\square \times \R\, \nu$ denote the strip $\square \times \R\, \nu := \{(t,x) + \xi\, \nu; (t,x)\in \square, \xi\in \R\}$.  We say that $s:\R^{d+1}\rightarrow \R$ is {\it $\square$-periodic} if $s((t,x) + r\, \omega) = u(t,x)$ when $r\in \R$ is the sidelength of $\square$ and $\omega\in \R^{d+1}$ is a unit-normal vector along an axis of $\square$.  Finally, with $R>0$, we introduce the function class 
\begin{equation}\label{space_X}
\mathcal{X}_R(\square): = \big\{ s: \R^{d+1} \to (-1,1): s \hbox{ is $C^1$, $\square$-periodic, and satisfy \eref{cutoff-cond} }\big\}
\end{equation} 
where
\begin{equation}\label{e.cutoff-cond}
    s(t,x) = \begin{cases} \mathfrak{s} & (t,x)\cdot \nu \geq R \\ - \mathfrak{s} & (t,x)\cdot \nu\leq -R \end{cases}.
\end{equation}

Now we define the interfacial energy with normal $\nu$ with width $R$ to be
\begin{align}\label{eqn:interfacial_energy_R}
    \bar{L}_R(\nu):= \inf\big\{|\square|^{-1} \mathcal{F}^1(s;\square \times \R\, \nu); \square\in \blacksquare_\nu,\ s\in \mathcal{X}_R(\square)\big\}.
\end{align}
The assumption that $s$ is $C^1$ is significant here as discontinuities in the time direction across the cell boundary could result in extra, unaccounted for, energy. We will show later in Lemma \ref{p.periodization}  that the condition $s \in C^1$ can be replaced by a finite energy condition without changing the value of $\bar{L}_R$.

\medskip

With the above definitions, we finally define the interfacial energy to be
\begin{align}\label{eqn:interfacial_energy}
    \bar{L}(\nu) := \liminf_{R\rightarrow +\infty} \bar{L}_R(\nu).
\end{align}
The limit exists since $\bar{L}_R(\nu)$ is monotone decreasing in $R$ and nonnegative from Proposition \ref{prop:const_optimality}.

\medskip

For the initial time we restrict $\nu$ to be oriented in the positive $t$ direction, and for $\square\in \blacksquare_{\nu}$ we consider the half-strip  $\square \times \R^+\, \nu := \{(t,x) + \xi\, \nu; (t,x)\in \square, \xi\in \R^+\}$. For $R>0$ and $s_0\in (-1,1)$, we denote 
\begin{equation}\label{space_init}
\mathcal{X}_R^{init}(s_0,\bar{s}, \square) := \big\{ s: \R^{d+1} \to (-1,1): s \hbox{ is $C^1$, $\square$-periodic, and satisfy \eref{cutoff-cond-init} }\big\}
\end{equation}
where
 \begin{equation}\label{e.cutoff-cond-init}
    s(t,x) =  \bar{s} \hbox{ for }  t \geq R \hbox{ and } s(0,x) = s_0.
\end{equation}
Note that, although the definition above makes sense for any $s_0 \in (-1,1)$, in the paper below we will only actually consider the case $s_0 \in [-\mathfrak{s},\mathfrak{s}]$ which is easier.  

We then define
\begin{align}\label{eqn:initial_energy_R}
V^{init}_R(s_0,\bar{s}) := \inf\big\{|\square|^{-1} \mathcal{F}^1(s;\square\times \R^+\, \nu); \square\in \blacksquare_{\nu},\ s\in \mathcal{X}_R^{init}(s_0,\bar{s}, \square)\big\} - \frac{1}{2\beta}\Phi(s_0).
\end{align}
and
\begin{align}\label{eqn:initial_energy}
V^{init} (s_0,\bar{s}): = \liminf_{R\rightarrow +\infty} V^{init}_R(s_0,\bar{s}).
\end{align}

\medskip

The terminal energy is constructed similarly. We restrict $\nu$ to be oriented in the negative $t$ direction. For $R>0$ and $\square\in \blacksquare_\nu$ we denote 
\begin{equation}\label{space_end}
\mathcal{X}_R^{end}(\bar{s}, \square):= \big\{ s: \R^{d+1} \to (-1,1): s \hbox{ is $C^1$, $\square$-periodic, and satisfy \eref{cutoff-cond-end} }\big\}
\end{equation}
where
 \begin{equation}\label{e.cutoff-cond-end}
    s(t,x) =  \bar{s} \quad\hbox{ for }  t \leq -R.
\end{equation}

We then define, for $g\in \R$ (which will later be restricted to $|g|\leq \frac{1}{2\beta}\Phi(\bar{s})$),
\begin{align}\label{eqn:terminal_energy_R}
V^{end}_R(\bar{s}, g) := \inf\big\{|\square|^{-1} \Big(\mathcal{F}^1(s;\square \times \R^+\, \nu) + \int_\square \Big[g\, s(0,x)+ \frac{1}{2\beta}\Phi\big(s(0,x)\big)\Big]dx\Big) ; \square\in \blacksquare_{\nu},\ s\in \mathcal{X}_R^{end}(\bar{s}, \square)\big\},
\end{align}
and
\begin{align}\label{eqn:terminal_energy}
V^{end}(\bar{s}, g) := \liminf_{R\rightarrow +\infty} V^{end}_R(\bar{s}, g).
\end{align}
Note that the limits in (\ref{eqn:initial_energy}) and  (\ref{eqn:terminal_energy}) exist due to monotonicity, although one can easily see that it is $+\infty$ unless $\bar{s}\in \{-\mathfrak{s},\mathfrak{s}\}$.

\medskip

For the remainder of this subsection, we discuss a further characterization of the macroscopic energy terms $\bar{L}(\nu)$, $V^{init}(s_0,\bar{s})$, and $V^{end}(\bar{s},g)$. Heuristically, when $J$ is radial and monotonically decreasing in the radial direction, the simplest form of a solution is given by the one-dimensional traveling wave, namely  
$$
    s(t,x) = q\big(\nu\cdot (t,x)\big).
$$
We prove that this is indeed the case for $V^{init}$ and $V^{end}$ where the nonlocal term does not participate. It remains as a conjecture for $\bar{L}$.

\begin{theorem}\label{thm:characterization}
The macroscopic initial energy is given by the one-dimensional reduction
\begin{align}\label{eqn:initial_cost}
    V^{init}({s}_0,\bar{s})     =  \liminf_{R\rightarrow \infty}\inf_{\tilde{s}\in C^1([0,R])}\Big\{ \int_0^{R}\Big(\mathcal{W}_\beta\big(\tilde{s}(t)\big) + \frac{1}{2\beta} \Psi\big(\tilde{s}(t),\tilde{s}'(t)\big)\Big) dt; \tilde{s}(0)={s}_0,  \tilde{s}(R)=\bar{s}\Big\} -\frac{1}{2\beta} \Phi(s_0).
\end{align}
Similarly, the macroscopic terminal energy is given by
\begin{align}\label{eqn:end_cost}
    V^{end}(\bar{s},g) = \liminf_{R\rightarrow \infty}\inf_{\tilde{s}\in C^1([-R,0])} \Big\{ \int_{-R}^{0}\Big(\mathcal{W}_\beta\big(\tilde{s}(t)\big) + \frac{1}{2\beta} \Psi\big(\tilde{s}(t),\tilde{s}'(t)\big)\Big) dt +  g\,\tilde{s}(0) + \frac{1}{2\beta}\Phi\big(\tilde{s}(0)\big);  \tilde{s}(-R)=\bar{s}\Big\}.
\end{align}
\end{theorem}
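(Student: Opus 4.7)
The plan is to prove each identity by matching upper and lower bounds, exploiting two features of the unscaled energy $\mathcal{F}^1$: the nonlocal interaction $\frac{1}{4}\int_{\R^d} J(z-w)(s(z)-s(w))^2\, dw$ is (i) pointwise non-negative, and (ii) vanishes identically when $s(t,\cdot)$ is constant in space. Together these suggest that the cell problems defining $V^{init}$ and $V^{end}$ decouple along spatial slices into independent one-dimensional problems, with constant-in-space competitors optimal.

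For the upper bound on $V^{init}$, I would select, for each $R>0$ and $\varepsilon,\delta>0$, a nearly optimal 1D competitor $\tilde s\in C^1([0,R])$ with $\tilde s(0)=s_0$ and $\tilde s(R)=\bar s$, and extend by $s(t,x):=\tilde s(t)$ on $[0,R]\times \square$. To produce a $C^1$ extension beyond $t=R$, I smoothly interpolate to the constant $\bar s$ over $[R,R+\delta]$ with vanishing derivative at $R+\delta$, placing the result in $\mathcal{X}_{R+\delta}^{init}(s_0,\bar s,\square)$ for any $\square\in\blacksquare_\nu$. Spatial constancy eliminates the nonlocal term, so
$$|\square|^{-1} \mathcal{F}^1(s;\square\times \R^+ \nu) = \int_0^{R+\delta} \left[\mathcal{W}_\beta(\tilde s(t)) + \tfrac{1}{2\beta}\Psi(\tilde s(t), \tilde s'(t))\right] dt.$$
The quadratic vanishing of $\mathcal{W}_\beta$ near $\pm \mathfrak{s}$ recorded in \pref{W_decomposition} makes the transition cost on $[R,R+\delta]$ of order $o(1)$ as $\delta\to 0$; sending $\varepsilon,\delta\to 0$ and then $\liminf_R$ delivers the claimed bound after the $-\tfrac{1}{2\beta}\Phi(s_0)$ adjustment.

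For the lower bound, given any admissible $s\in \mathcal{X}_R^{init}(s_0,\bar s,\square)$, I would discard the non-negative nonlocal term, use $\mathcal{W}_\beta(\bar s)=\Psi(\bar s,0)=0$ to restrict the time integration to $[0,R]$, and apply Fubini to obtain
$$\mathcal{F}^1(s;\square\times\R^+\nu)\;\geq\;\int_\square\int_0^R \left[\mathcal{W}_\beta(s(t,x)) + \tfrac{1}{2\beta}\Psi(s(t,x), \partial_t s(t,x))\right] dt\, dx.$$
For a.e.\ $x\in\square$, the slice $t\mapsto s(t,x)$ lies in $C^1([0,R])$ with boundary values $s_0$ and $\bar s$, so it is an admissible 1D competitor and the inner integral dominates the 1D infimum pointwise in $x$. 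Dividing by $|\square|$, infimizing over $s$, subtracting $\tfrac{1}{2\beta}\Phi(s_0)$, and taking $\liminf_R$ completes the lower bound.

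The terminal case $V^{end}$ is handled in exact parallel. The extra boundary term $\int_\square[g\,s(0,x)+\tfrac{1}{2\beta}\Phi(s(0,x))]dx$ in the definition of $V^{end}_R$ splits cleanly under Fubini into the endpoint cost $g\tilde s^x(0)+\tfrac{1}{2\beta}\Phi(\tilde s^x(0))$ of each 1D slice for the lower bound, and is reproduced exactly by the constant-in-space construction for the upper bound; note that no Dirichlet condition at $t=0$ is imposed on the 1D problem in that direction. The only real technical subtlety, and the step I expect to need the most care, is the $C^1$ regularity requirement for admissible $s$: a 1D $C^1$ profile concatenated with the constant $\bar s$ is only Lipschitz. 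This is handled by the short smooth transition with $o(1)$ energy cost described above, leveraging the quadratic minima of $\mathcal{W}_\beta$ and the subquadratic growth of $\Psi(\cdot,v)$ in $v$ near the equilibria.
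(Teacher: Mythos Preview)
Your proposal is correct and follows essentially the same approach as the paper: drop the non-negative nonlocal term and use a Fubini/slicing argument for the lower bound, and extend a near-optimal one-dimensional profile constantly in space for the upper bound. The paper phrases the lower bound as choosing a single ``regular value'' $x$ whose slice has cost at most the spatial average, whereas you bound each slice from below by the one-dimensional infimum and then average---these are equivalent. Your treatment of the $C^1$ junction at $t=R$ via a short smooth transition of $o(1)$ cost is a detail the paper leaves implicit, but it is correctly handled.
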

\begin{proof}
The inequality $\leq$ for both (\ref{eqn:initial_cost}) and (\ref{eqn:end_cost}) is immediate as the one-dimensional solutions may be used in the definition of $V^{init}(s_,\bar{s})$ and $V^{end}(\bar{s},g)$ by extending as constants in space and incur the same cost.

For other direction we consider $s\in \mathcal{X}_R^{init}(s_0,\bar{s},\square)$.  We may find a regular value for $x$ such that
$
    q(t) = s(t,x)
$
satisfies $q(0) = s_0$ and 
$$
    \frac{1}{|\square|} \int_{0}^R\int_{\square}\Big( \mathcal{W}_\beta\big(s(t,x)\big) + \Psi\big(s(t,x),\partial_t s(t,x)\big)\Big)dx\, dt\geq\int_{0}^R\Big( \mathcal{W}_\beta\big(q(t)\big) +\frac{1}{2\beta} \Psi\big(q(t),q'(t)\big)\Big)dt.
$$
The inequality $\geq$ in (\ref{eqn:initial_cost}) follows as the nonlocal term is nonnegative.

Similarly, for (\ref{eqn:end_cost}) we consider $s\in \mathcal{X}_R^{end}(\bar{s},\square)$, and find a regular value of $x$ such that $q(t)=s(t,x)$ and
\begin{align*}
    &\ \frac{1}{|\square|} \int_{-R}^0\int_\square \Big( \mathcal{W}_\beta\big(s(t,x)\big) + \Psi\big(s(t,x),\partial_t s(t,x)\big)\Big)dx\, dt + \frac{1}{|\square|}\int_{\square}\Big[g\, s(0,x) + \frac{1}{2\beta} \Phi\big(s(0,x)\big)\Big] dx\\
    \geq&\ \int_{-R}^0 \Big( \mathcal{W}_\beta\big(q(t)\big) +\frac{1}{2\beta} \Psi\big(q(t),q'(t)\big)\Big)dt + g\, q(0) + \frac{ 1}{2\beta}\Phi\big(q(0)\big).
\end{align*}
The result follows.
\end{proof}

\begin{remark}\label{layer_symmetry} 
The decomposed energy has a symmetry, $\tilde{s}(t)=-\tilde{s}(-t)$, by evenness of $\mathcal{W}_\beta$ and $\Psi$.  This interesting observation, not obvious from the original formulation, yields in particular that
$$
    V^{end}(\bar{s},g) = \inf_{s_0}\Big\{ V^{init}({s}_0,\bar{s}) + g\, {s}_0 + \frac{1}{\beta}\Phi(s_0)\Big\} \hbox{ for } \ g \in \R,\ \bar{s} \in \{\pm\mathfrak{s}\}.
$$

So long as $-\mathfrak{s}<s_0<\mathfrak{s}$ the solution for $V^{init}(s_0,\bar{s})$ is a time translation of the same `heteroclinic' solution.  When $s_0\leq -\mathfrak{s}$ and $\bar{s}=\mathfrak{s}$ the solution for $\tilde{s}$ in the definition of $V^{init}$, (\ref{eqn:initial_cost}), does not exist, although the infimum is still well defined. 
\end{remark}

\begin{remark}\label{front_prop}
(Controlled front propagation).
We may also relate the macroscopic problem to a problem of the optimal control of the propagation front, which has been studied in \cite{bressan2021optimal}, \cite{bressan2021optimalmoving}. Consider that the unit normal $\nu=(\nu_t,\nu_x)$, and when $|\nu_x|\not=0$ the front speed may be expressed as $c = \frac{\nu_t}{|\nu_x|}$.  We let $\hat{\nu}_x = \frac{\nu_x}{|\nu_x|}$ denote the spatial unit-normal. The anisotropic minimal surface problem for $\Sigma$, may now be converted into a problem of controlled front propagation where the cost rate to propagate the front with velocity $c$ with spatial unit-normal $\hat{\nu}_x$ is given by
$$
\tilde{L}(c;\hat{\nu}_x) := \sqrt{1+c^2}\bar{L}(\nu),
$$
where clearly $\nu$ can be recovered from $c$ and $\hat{\nu}_x$ as $\nu = (c,\hat{\nu}_x) /\sqrt{1+c^2}$. 
By an application of Fubini's theorem and the coarea formula, we may express
\begin{align*}
\int_{\Sigma} \bar{L}\big(\nu(\tau,z)\big) d\mathcal{H}^d = \int_0^T \int_{\Sigma_\tau}\tilde{L}\big(c(\tau,z);\hat{\nu}_x(\tau,z)\big) d\mathcal{H}^{d-1}\, d\tau.
\end{align*}
 Thus the macroscopic problem is reinterpreted as controlling the wave speed of the evolving front $\Sigma_t$. This formulation recovers some optimal control structure of the problem. A more rigorous expression of the controlled front problem is given in \cite{bressan2021optimalmoving}.
\end{remark}

A partial result holds for the interfacial energy, reducing the problem to the directions ($t,\omega\, \hat{\nu}_x$) when $|\nu_x|\not =0$.
For a unit-normal $e$, we let
$$
    J^e(r) := \int_{\R^{d-1}} J(r\, e + y)dy,
$$
where the integral is taken over the subspace orthogonal to $e$. 
Given $\xi \in \R^d$ and a unit-vector $e\in \R^d$ we set $\xi_{\perp e} = \xi - (\xi\cdot e)e$.

\begin{proposition}\label{prop:partial_characterization}
Given a unit vector $\nu$ with $|\nu_x|\not=0$, 
 assume that the Fourier transform $\mathcal{F} J(\xi)$ is maximized at $\mathcal{F} J(\xi_{\perp \hat{\nu}_x})$. 

Then the macroscopic interfacial energy $\bar{L}(\nu)$ is given by the two-dimensional reduction where we limit the dependence of functions in $\mathcal{X}_R(\square)$ to only $(t,x\cdot \hat{\nu}_x)$.

\end{proposition}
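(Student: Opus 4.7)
The inequality $\bar{L}(\nu) \leq $ (2D reduction) is immediate from the definitions. Any function depending only on $(t, x\cdot\hat{\nu}_x)$ belongs to $\mathcal{X}_R(\square)$ once $\square$ is chosen so that its axes decompose into one direction $e_1$ lying in the $(t,\hat{\nu}_x)$ plane within $\nu^\perp$ and the remaining $d-1$ directions $e_2,\ldots,e_d$ orthogonal to $\hat{\nu}_x$; restricting the class of competitors in \eqref{eqn:interfacial_energy_R} can only raise the infimum. Taking $R \to \infty$ passes to $\bar{L}$.

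For the converse, the plan is: given $s \in \mathcal{X}_R(\square)$ with near-optimal energy, produce a 2D-reduced competitor $\tilde{s}$ with no larger energy. Without loss of generality write $\square = I \times \square_\perp$ as above, and define the perpendicular average
\[
\tilde{s}(t, u) := \frac{1}{|\square_\perp|}\int_{\square_\perp} s(t,\, u\hat{\nu}_x + y)\,dy.
\]
The key observation for the nonlocal part is that averaging always decreases it: in Fourier coordinates on $\square$ the nonlocal energy is the quadratic form $\tfrac{|\square|}{2}\sum_\xi \bigl(\hat{J} - \mathcal{F}J(\xi)\bigr)|\hat{s}(\xi)|^2$, and passing to $\tilde{s}$ zeroes out all Fourier coefficients with nonzero perpendicular component; because $\hat{J}-\mathcal{F}J \geq 0$, the difference is nonnegative. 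After this reduction, the nonlocal integral against $J$ restricted to the hyperplane of $\xi$ with $\xi_\perp = 0$ agrees with the 2D nonlocal integral against the effective kernel $J^{\hat{\nu}_x}$, using $\mathcal{F}J^{\hat{\nu}_x}(\eta) = \mathcal{F}J(\eta\hat{\nu}_x)$.

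The delicate step is the local contribution $\int [\mathcal{W}_\beta(s) + \tfrac{1}{2\beta}\Psi(s,\partial_t s)]$, since $\mathcal{W}_\beta$ is nonconvex and averaging can a priori push $s$ up the central hump. This is where the Fourier assumption enters. Under $\mathcal{F}J(\xi) \leq \mathcal{F}J(\xi_{\perp\hat{\nu}_x})$ the penalty $\hat{J} - \mathcal{F}J(\xi)$ at modes with nonzero parallel component is bounded below by $\hat{J} - \mathcal{F}J(\xi_\perp)$, so the nonlocal gain $\mathcal{N}^1(s) - \mathcal{N}^1(\tilde{s})$ dominates a weighted $L^2$ deviation of $s$ from $\tilde{s}$ in the perpendicular directions, a Poincaré-type bound that controls $\|s - \tilde{s}\|_{L^2}$. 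Since $\mathcal{W}_\beta$ and $\Psi(\cdot,V)$ are Lipschitz on the compact set $[-\mathfrak{s},\mathfrak{s}]$ (using $\lambda = 1$ so the argument $\lambda \partial_t \tilde s$ is just $\partial_t \tilde s$, and $\partial_t$ commutes with the perpendicular average), the change in local energy is bounded by this $L^2$ deviation, which is in turn absorbed by the nonlocal gain. A standard approximation/mollification restores the $C^1$ regularity required by $\mathcal{X}_R$ without increasing energy (as in Lemma~\ref{p.periodization}), and we conclude that the 2D-reduced infimum matches $\bar{L}_R(\nu)$ up to $\varepsilon$; sending $R \to \infty$ finishes the claim.

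The main obstacle is precisely the last step: controlling the local increase by the nonlocal gain. The Fourier assumption alone gives a Poincaré constant that degenerates whenever $\mathcal{F}J$ approaches $\hat{J}$ on the perpendicular hyperplane, so one may need either a strict-positivity strengthening of assumption~\ref{a.Jpos}--\ref{a.Jmoment} for this argument, or an iterative scheme that averages over successively finer perpendicular lattices while monitoring both energies. The absence of a full characterization (contrast with the one-dimensional reduction for $V^{init}$ and $V^{end}$ in Theorem~\ref{thm:characterization}) reflects this: the proposition delivers only a \emph{partial} reduction, conditional on the Fourier hypothesis that controls precisely the perpendicular oscillations.
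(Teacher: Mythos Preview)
Your easy direction is fine and matches the paper. The hard direction, however, takes a different route from the paper and contains a genuine gap that you yourself flag.

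The paper does \emph{not} average in the perpendicular directions. Instead it uses the Fourier hypothesis only to bound the nonlocal term from below by a version that is fibered over $x_{\perp\hat{\nu}_x}$: from
\[
\int_{\R^d} s\,(J*s)\,dx = \int_{\R^d} \mathcal{F}J(\xi)\,|\mathcal{F}s(\xi)|^2\,d\xi \leq \int_{\R^d} \mathcal{F}J(\xi_{\perp\hat{\nu}_x})\,|\mathcal{F}s(\xi)|^2\,d\xi
\]
one obtains, after inverting, that the nonlocal energy of $s$ is bounded below by $\int_{\R^{d-1}}(\text{1D nonlocal energy of the slice }s(t,x_\perp+\cdot\,\hat{\nu}_x))\,dx_\perp$. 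The local terms $\mathcal{W}_\beta(s)+\tfrac{1}{2\beta}\Psi(s,\partial_t s)$ decompose in the same fibered way by Fubini with no inequality needed. Hence the full energy is at least the $x_\perp$-average of the 2D energies of the slices, so some slice has 2D energy at most the original. That slice is the 2D competitor, exactly as in the proof of Theorem~\ref{thm:characterization}. Because the local terms are never touched by Jensen, the nonconvexity of $\mathcal{W}_\beta$ plays no role.

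Your averaging approach, by contrast, replaces $s$ by $\tilde{s}$ and must then confront $\mathcal{W}_\beta(\tilde{s})$ versus the average of $\mathcal{W}_\beta(s)$, which can go the wrong way. Your proposed remedy---absorb the local defect into the nonlocal gain via a Poincar\'e inequality---does not close: the ``Poincar\'e constant'' you would need is $\inf_{\xi_\perp\neq 0}\bigl(\mathcal{F}J(\xi_{\perp\hat{\nu}_x})-\mathcal{F}J(\xi)\bigr)$, which is zero in general (and even under the hypothesis may vanish along $\xi\to 0$ or $\xi\to\infty$), so there is no uniform absorption. You correctly identify this degeneration but then leave the argument open. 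Switch to the slicing argument and the difficulty disappears.
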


The assumption on $\mathcal{F}J$ is satisfied for instance when $J$ is a Gaussian centered at zero.

\begin{proof}
We extend $s$ to $\R^{d+1}$ by zero, and the Plancherel/Parseval theorem and convolution formula states that
\begin{align*}
&\ \int_{\R^d} s(t,x) (J*s)(t,x)dx\\
 =&\ \int_{\R^d} (\mathcal{F}J)(\xi) \Big((\mathcal{F} s)(t,\xi)\Big)^2d\xi\\
\leq&\  \int_{\R^d} (\mathcal{F}J)(\xi_{\perp \hat{\nu}_x}) \Big((\mathcal{F} s)(t,\xi)\Big)^2d\xi
\end{align*}
by our assumption on $\mathcal{F}J$.

The inverse Fourier transform $\mathcal{F}J(\xi_{\perp \hat{\nu}_x})$ in all variables is formally $\delta_{\perp \hat{\nu}_x}J(x)$ where $\delta_{\perp \hat{\nu}_x}$ is the $d-1$ Hausdorff measure on the subspace orthogonal to $\hat{\nu}_x$, and
$$
    \big((\delta_{\perp \hat{\nu}_x} J) *s\big)(t,x)=\int_{\R} J^{\hat{\nu}_x}(\hat{\nu}_x\cdot x- \omega')s(t,x_{\perp \hat{\nu}_x} + \omega'\, \hat{\nu}_x)d\omega',
$$
so
\begin{align*}
&\ \int_{\R^d} s(t,x) (J*s)(t,x)dx\\
\leq &\ \int_{\R^d} s(t,x) (\delta_{x_{\perp \hat{\nu}_x}} J *s)(t,x)dx\\
=&\ \int_{\R} \int_{\R^{d-1}}\int_{\R} J^{\hat{\nu}_x}(\omega-\omega')s(t,x_{\perp \hat{\nu}_x} + \omega \hat{\nu}_x)s(t,x_{\perp \hat{\nu}_x} + \omega' \hat{\nu}_x)d\omega'\, dx_{\perp \hat{\nu}_x}\, d\omega.
\end{align*}
Equality above holds when $s$ does not depend on $x_{\perp\hat{\nu}_x}$.  The problem for $\bar{L}(\nu)$ is then equivalent to minimizing over $s$ that only depend on $t$ and $\omega = x\cdot \hat{\nu}_x$.
\end{proof}

\subsection*{Conjecture}

We suspect that, under the assumptions of Proposition \ref{prop:partial_characterization}, the limit cost can be characterized entirely in terms of the travelling wave solutions. Potential lack of regularity and topology of the interface associated with the limiting cost makes it difficult to verify our ansatz. More precisely we conjecture that the interfacial cost $\bar{L}$ can be characterized using the front speed $c = \nu_t/|\nu_x|$ (the ratio of the size of normal in the time and the spatial direction). Indeed we expect that (recall $\bar{L}(\nu)= 
\frac{1}{\sqrt{1+c^2}}\,\tilde{L}(c)$) we have
\begin{align}\label{eqn:interfacial_cost}
   \tilde{L}(c) = \inf_{q\in C^1(\R)}\Big\{\int_{\R}\Big( \mathcal{W}_\beta\big(q(\xi)\big) + \frac{1}{2\beta}\Psi\big(q(\xi), c\, q'(\xi)\big) + \frac{1}{4}&\int_{\R}\big(J^e(\xi-\eta)q(\xi)-q(\eta)\big)^2d\eta\Big)d\xi;\\
   &\ \lim_{\xi\rightarrow -\infty} q(\xi)=-\mathfrak{s},\ \lim_{\xi\rightarrow +\infty} q(\xi)=\mathfrak{s}\Big\}.\nonumber
\end{align}
A minimizer of (\ref{eqn:interfacial_cost}), would then construct travelling wave solutions of the form $s(t,x) = q(c\, t-\hat{\nu}_x\cdot x)$.

% where $q^c$ and $e^c$ are the travelling wave solutions. 
% In other words, $q^c$ and $e^c$  minimize
% $$
%     \int_{\R}\Big(L\big(q(\xi),e_\pm(\xi)\big)  - \frac{1}{2}(J^0 * q)(\xi)\, q(\xi)-\boldsymbol{\Lambda}\Big)d\xi
% $$
% subject to the constraints
% $$
%     -c\, q'(\xi)= e_-(\xi)\, \frac{1-q(\xi)}{2} - e_+(\xi)\, \frac{1+q(\xi)}{2},
% $$
%  $\lim_{\xi\rightarrow -\infty} q(\xi)=-\mathfrak{s}$ and $\lim_{\xi\rightarrow +\infty} q(\xi)=\mathfrak{s}$.

\begin{figure}
\centering
     \begin{subfigure}[b]{0.4\textwidth}
         \centering
         \includegraphics[width=\textwidth]{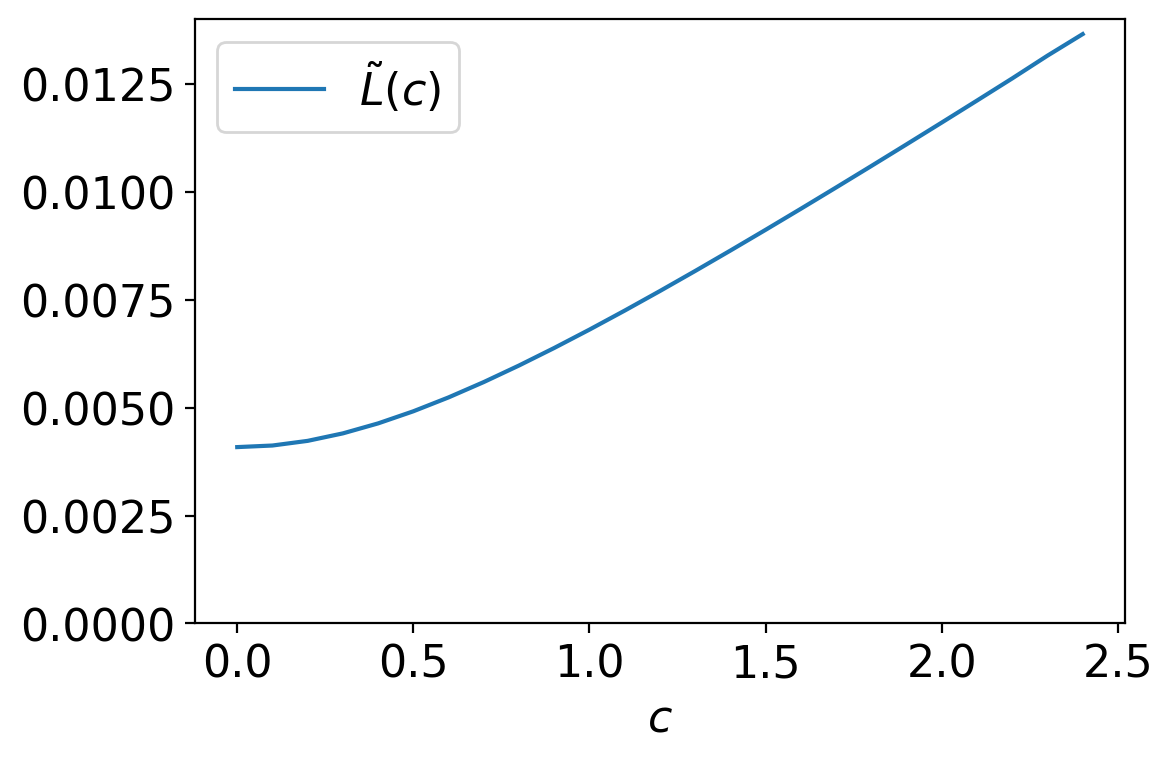}
     \end{subfigure}
     \hfill
     \begin{subfigure}[b]{0.4\textwidth}
         \centering
         \includegraphics[width=\textwidth]{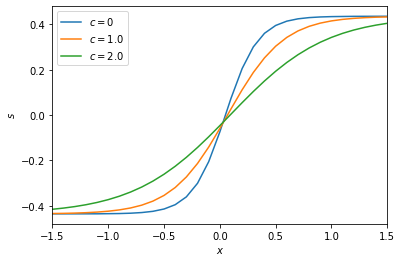}
     \end{subfigure}
\caption{On the left, a plot of $\tilde{L}(c)$. On the right, a spatial slice of three solutions to the cell problem with different front speeds $c$. Parameters are $\beta=0.9^{-1}$ and $\hat{J}=1$.}
\end{figure}

\begin{remark}

If the conjecture holds, then it follows that
$$
   \lim_{c\rightarrow \infty} \frac{1}{\sqrt{1+c^2}}\, \tilde{L}(c) =  V^{init}(-\mathfrak{s},\mathfrak{s}),
$$
as the infinite speed transition is equivalent to the microscopic in time switching from $-\mathfrak{s}$ to $\mathfrak{s}$. 
\end{remark}

\subsection{Proofs}\label{s.sec3proofs}

Here we present some of the longer proofs from earlier in this section that we postponed: the decomposition formula (Proposition ~\ref{p.W_decomposition}), optimality of the constant solutions (Proposition~\ref{prop:const_optimality}), and the improvement of cost for states bounded between the equilibria (Lemma~\ref{lem:bdd}).

\subsubsection{Proof of Proposition ~\ref{p.W_decomposition} }
First we compute the optimal controls, $A_\pm$, as a function of $V$. Changing $A_\pm$ while preserving the equality
\[ V =A_-\, \frac{1-S}{2} - A_+\, \frac{1+S}{2} \]
only affects the term $L(S,A_\pm)$ in the energy so by Lagrange multipliers there is $B\in \R$
\[ \frac{\partial L}{\partial A_\pm}(S,A_\pm) = \pm B \frac{1\pm S}{2}\]
or
\[ \frac{1\pm S}{2} \log A_\pm = \pm B \frac{1\pm S}{2}\]
and so
\[ \log A_+A_- = B - B = 0.\]

Plugging this back into the constraint ODE we find the quadratic equation
\[ A_-^2\frac{1-S}{2} - VA_- - \frac{1+S}{2} = 0\]
so taking the positive root of this equation and then using the constraint $A_+A_- = 1$ we find
\[ (1\pm S)A_\pm(S,V) = \sqrt{V^2+(1-S)(1+S)}\mp V.\]
These are strictly positive, monotone, and convex in $V$. 

Note that
\begin{equation}\label{e.oneq'deriv}
(1\pm S) \frac{\partial}{\partial V}A_\pm(S,V) =\frac{V}{\sqrt{V^2+(1-S)(1+S)}}\mp1 = \frac{\mp(1\pm S)A_\pm}{\sqrt{V^2+(1-S)(1+S)}}\end{equation}
and, in particular,
\[ (1\pm S) \frac{\partial}{\partial V}A_\pm(S,0) = \mp 1.\]
Differentiating (\ref{e.oneq'deriv}) again we note that
\begin{equation}\label{e.twoq'deriv}
 \frac{\partial^2}{\partial V^2}\left[\frac{1+ S}{2}A_+(S,V)\right] = \frac{\partial^2}{\partial V^2}\left[\frac{1- S}{2}A_-(S,V)\right]. 
 \end{equation}

Now plugging this into the definition of $W(S,V)$
\[ W(S,V) = L(S,A_\pm(S,V)) - \frac{1}{2} \hat{J} S^2 - \bs\Lambda\]
we see the desired properties of $W$ are a matter of calculus.  First, compute
\[ D^2_{A_\pm}L(S,A_\pm) = \left[\begin{array}{cc}\frac{1+S}{2\beta }\frac{1}{A_+} & 0 \\ 0 & \frac{1-S}{2\beta }\frac{1}{A_-}\end{array}\right].\]
So computing directly
\begin{equation}\label{e.dpWformula}
 \partial_{V}W(S,V) = \frac{1}{2\beta} \sum_{\pm}(1\pm S)\log A_\pm \frac{\partial E_{\pm}}{\partial V}\end{equation}
 and, in particular,
 \[ \partial_{V}W(S,0) = \frac{1}{2\beta} \sum_{\pm}\mp \log \sqrt{\frac{1\mp S}{1\pm S}} = \frac{1}{2\beta} \log \frac{1+S}{1-S}.\]
For the second derivative we continue computing
\[  \partial_{V}^2W(S,V) = \frac{1}{2\beta}\sum_{\pm} \frac{1}{1\pm S}\frac{1}{A_\pm} \left((1\pm S)\frac{\partial A_\pm}{\partial V}\right)^2 + \frac{1}{\beta}\sum_{\pm}\frac{1\pm S}{2}(\log A_\pm -1) \frac{\partial A_\pm}{\partial V^2}.\]
Using (\ref{e.twoq'deriv}) and $\log(A_+A_-) = 1$ we see that the second term above vanishes and so
\begin{align*}
 \partial_{V}^2W(S,V) &= \frac{1}{2\beta}\sum_{\pm} \frac{1}{1\pm S}\frac{1}{A_\pm} \left((1\pm S)\frac{\partial A_\pm}{\partial V}\right)^2 \\
 &= \frac{1}{2\beta}\sum_{\pm} \frac{(1\pm S)A_\pm}{V^2+(1-S)(1+S)}\\
 &= \frac{1}{2\beta} \frac{1}{\sqrt{V^2+(1-S)(1+S)}} 
 \end{align*}
where we have used (\ref{e.oneq'deriv}) to get the second equality.   This gives the convexity in the $V$ variable and we can go a bit further to make a strict convexity estimate.
 
 Then by the fundamental theorem of calculus
 \[ W(S,V) = W(S,0) + \partial_{V}W(S,0) V + \frac{1}{4\beta }\int_0^V\frac{(V-Z)}{\sqrt{Z^2+(1-S)(1+S)}} dZ.\]
We have the formula $\partial_{V}W(S,0) = \frac{1}{2\beta} \log \frac{1+S}{1-S}$ from above, which we express as $\partial_{V}W(S,0) = \frac{1}{2\beta}\Phi'(S)$. 

For the remainder term we define, as in the statement of the theorem,
\[ \Psi(S,V) =  \int_0^V\frac{(V-Z)}{\sqrt{Z^2+(1-S)(1+S)}} dZ.\]
We note that, for $ 0 \leq Z \leq V \leq \sqrt{(1-S)(1+S)}$, 
 \begin{equation}\label{e.near0-psi-bd}
     \frac{1}{\sqrt{2(1-S)(1+S)}}\leq \frac{1}{\sqrt{Z^2+(1-S)(1+S)}} \leq \frac{1}{\sqrt{(1-S)(1+S)}}
 \end{equation} 
so
\[ \frac{1}{\sqrt{2}\sqrt{(1-S)(1+S)}} V^2\leq \Psi(V) \leq \frac{1}{\sqrt{(1-S)(1+S)}} V^2.\]
Note that the upper bound is true for arbitrary $V$.

While for $V \geq \sqrt{(1-S)(1+S)}$
\begin{align*}
 \Psi(S,V) &\geq  \int_{\frac{1}{3}\sqrt{(1-S)(1+S)}}^V\frac{V-Z}{2Z} dZ \\
 &= \frac{1}{4}\bigg[V(\log Z-1)\bigg]_{\frac{1}{a}\sqrt{(1-S)(1+S)}}^V  \\
 &= \frac{1}{4}V\log\frac{3V}{\sqrt{(1-S)(1+S)}}\\
 &\geq \frac{1}{4}V\log\left(2+\frac{V}{\sqrt{(1-S)(1+S)}}\right).
 \end{align*}
 The corresponding upper bound will not be used anywhere so we omit the proof.

 \medskip
 
 Next, we discuss the properties of $\partial_V\Psi(S,V)$. note that 
 \[\partial^2_V\Psi(S,V) = 4\beta\, \partial_V^2W(S,V) = 2 \frac{1}{\sqrt{V^2+(1-S)(1+S)}} \]
 which implies the convexity of $\Psi$ and the concavity of $\partial_V\Psi$.  
 
 For $V \leq \sqrt{(1-S)(1+S)}$ we use again \eref{near0-psi-bd} to find
 \[ \partial_V\Psi(S,V) \sim \frac{1}{\sqrt{(1-S)(1+S)}} V \ \hbox{ for } \ 0< V\leq  \sqrt{(1-S)(1+S)}\]
 For $V \geq \sqrt{(1-S)(1+S)}$ we use $\partial^2_V\Psi (S,V) \sim V^{-1}$ to find
 \[ \partial_V\Psi(S,V) \sim \log \left(2+\frac{V}{\sqrt{(1-S)(1+S)}}\right).\]
 
 \medskip
 
  \emph{double-well potential.}  Lastly, we consider the double/single well nature of the potential 
  \begin{align*}
   \mathcal{W}_\beta(S) &= L\big(S,A_\pm(S,0)\big) - \frac{1}{2} \hat{J} S^2 - \Lambda \\
   &= \frac{1}{2\beta}\sum_{\pm} (1\pm S)A_\pm(S,0)\Big(\log\big(A_\pm(S,0)\big) - 1\Big) - \frac{1}{2} \hat{J} S^2 - \Lambda \\
   &=\frac{1}{2\beta}\sqrt{(1+S)(1-S)}(\log A_+A_- - 2)\\
   &=-\frac{1}{\beta}\sqrt{(1+S)(1-S)}- \frac{1}{2} \hat{J} S^2 - \Lambda.
   \end{align*}
   Note that $\mathcal{W}_\beta$ always has a critical point at $S = 0$ and
   \[ \mathcal{W}_{\beta}''(0) = \frac{1}{\beta} - \hat{J} \]
   so we can see again the critical value at $\beta \hat{J} = 1$.  When $\beta \hat{J} > 1$ the critical point at the origin is a local maximum and there are two local minima at
   \[ \mathfrak{s} = \sqrt{1-\beta^{-2}\hat{J}^{-2}}\]
   and $-\mathfrak{s}$.
\hfill$\Box$

\subsubsection{Proof of Proposition \ref{prop:const_optimality}}

\begin{proof}
The proposition also follows from Proposition \ref{prop:W_decomposition} as we have $\Psi(V,S)\geq 0$ with equality when $V=0$, 
$$
\frac{1}{4}\int_{\T^d}\int_{\T^d} J^\lambda(w-z)\big(s(w)-s(z)\big)^2dw\, dz\geq 0
$$
with equality when $s$ is constant,
and $\mathcal{W}_\beta(S)\geq 0$ with equality when $S\in \{-\mathfrak{s},\mathfrak{s}\}$.  From the proof of Proposition \ref{prop:W_decomposition} we see that the case $S=\mathfrak{s}$ corresponds exactly with $A = A_\pm (\mathfrak{s},0)$, and the case $S=-\mathfrak{s}$ corresponds exactly with $A = A_\pm (-\mathfrak{s},0)$.
\end{proof}

\subsubsection{ Proof of Lemma~\ref{lem:bdd}.}
We first show the result for  ${s}^{b-}(\tau,z)=\min\{s(\tau,z), \mathfrak{s}\}$ and then restricting to $s(\tau,z)\geq -\mathfrak{s}$ is similar.

Consider the set $\Omega = \{(\tau,z): s(\tau,z)>\mathfrak{s}\}$.  We also define ${s}^{b+}(\tau,z)=\max\{{s}(\tau,z), \mathfrak{s}\}$. 

We first note that for each $z\in \mathbb{T}^{d}$, ${s}^{b+}$ and ${s}^{b-}$ are weakly differentiable in time with 
$$
    \partial_\tau {s}^{b+}(\tau,z) = \begin{cases} \partial_\tau s(\tau,z) &  (\tau,z) \in \Omega\\
    0 & {\rm otherwise},
    \end{cases}
$$
and 
$$
    \partial_\tau {s}^{b-}(\tau,z) = \begin{cases} 0 &  (\tau,z) \in \Omega\\
    \partial_\tau s(\tau,z) & {\rm otherwise}.
    \end{cases}
$$

The local part of the cost separates into the two domains, that is,
\begin{align*}
\mathcal{W}_\beta\big(s(\tau,z)\big) + \frac{1}{2\beta}  \Psi\big(s(\tau,z),\lambda\, \partial_\tau s(\tau,z)\big)=&\ \mathcal{W}_\beta\big(s^{b-}(\tau,z)\big) + \frac{1}{2\beta}  \Psi\big(s^{b-}(\tau,z),\lambda\, \partial_\tau s^{b-}(\tau,z)\big)\\
&\ +\mathcal{W}_\beta\big(s^{b+}(\tau,z)\big) + \frac{1}{2\beta}  \Psi\big(s^{b+}(\tau,z),\lambda\, \partial_\tau s^{b+}(\tau,z)\big)
\end{align*}

For the nonlocal part,  we will use that for $(\tau,z)\in [0,T]\times \mathbb{T}^d$,
$$
    {s}^{b-}(\tau,z)+{s}^{b+}(\tau,z) = s(\tau,z)+\mathfrak{s},
$$
and ${s}^{b-}(\tau,z)\leq s(\tau,z)$, and ${s}^{b+}(\tau,z)\geq s(\tau,z)$.  By nonnegativity of $J^\lambda$ we also have that
$$
    (J^\lambda*{s}^{b-})(\tau,z)\leq (J^\lambda*s)(\tau,z), \quad (J^\lambda*{s}^{b+})(\tau,z)\geq (J^\lambda*s)(\tau,z).
$$

For $(\tau,z)\in \Omega$, we have $s(\tau,z) = {s}^{b+}(\tau,z)$ and ${s}^{b-}(\tau,z)=\mathfrak{s}$ and (dropping the dependence on $(\tau,z)$ for ease of notation)
\begin{align*}
s\, J^\lambda*s \leq&\ s\, J^\lambda*s+(\mathfrak{s}-s)\, (J^\lambda*s - J^\lambda*{s}^{b+})\\
=&\ \mathfrak{s}\, (J^\lambda*s - J^\lambda*s^{b+})  +s\, J^\lambda*s^{b+}\\
=&\ \mathfrak{s}\, (J^\lambda*s^{b-} - \hat{J^\lambda}\, \mathfrak{s})  +s^{b+}\, J^\lambda*s^{b+}\\
=&\ s^{b-}\, J^\lambda*s^{b-}+s^{b+}\, J^\lambda*s^{b+}- \hat{J^\lambda}\mathfrak{s}^2.
\end{align*}
Similarly, for $(t,x)\in \Omega^c$, we have $s(t,x) = s^{b-}(t,x)$ and $s^{b+}(t,x)=\mathfrak{s}$ and 
\begin{align*}
s\, J^\lambda*s \leq&\ s\, J^\lambda*s+(\mathfrak{s}-s)\, (J^\lambda*s - J^\lambda*s^{b-})\\
=&\ \mathfrak{s}\, (J^\lambda*s - J^\lambda*s^{b-})  +s\, J^\lambda*s^{b-}\\
=&\ \mathfrak{s}\, (J^\lambda*s^{b+} - \hat{J}\, \mathfrak{s})  +s^{b-}\, J^\lambda*s^{b-}\\
=&\ s^{b-}\, J^\lambda*s^{b-}+s^{b+}\, J^\lambda*s^{b+}- \hat{J}\mathfrak{s}^2.
\end{align*}

This implies that
$$
    \mathcal{G}^\lambda\big(s;[0,T]\times \T^d\big)\geq \mathcal{G}^\lambda\big(s^{b-};[0,T]\times \T^d\big)+\mathcal{G}^\lambda\big(s^{b+};[0,T]\times\T^d\big)-\mathcal{G}^\lambda\big(\mathfrak{s};[0,T]\times\T^d\big),
$$
 and, by Proposition \ref{prop:const_optimality}, 
$$
     \mathcal{G}^\lambda\big(s^{b+};[0,T]\times \T^d\big) \geq  \mathcal{G}^\lambda\big(\mathfrak{s};[0,T]\times\T^d\big) = 0.
$$
We conclude
$$
 \mathcal{G}^\lambda\big(s;[0,T]\times \T^d\big)\geq \mathcal{G}^\lambda\big(s^{b-};[0,T]\times \T^d\big).
 $$
 
 Observing that $\Phi$ is convex and $|g(z)|\leq \frac{1}{2\beta}\Phi'(\mathfrak{s})$, we have
 $$
    s^{b-}(T,z)\, g(z) + \frac{1}{2\beta}\Phi\big(s^{b-}(T,z)\big) \leq s(T,z)\, g(z) + \frac{1}{2\beta}\Phi\big(s(T,z)\big),
 $$
 which finishes the proof.
\hfill$\Box$

\section{Main result}

Our main result is a type of Gamma-convergence, akin to  Theorem 1.4 of \cite{alberti1998non} which addresses nonlocal Allen-Cahn equation. In addition to the previous assumption \aref{Jpos} and \aref{Jmoment} on $J$ we will always assume in this section
\begin{enumerate}[label = (A\arabic*)]
 \setcounter{enumi}{2}
\item (Super-criticality) \label{a.Jsupercritical}\[\beta \hat{J} > 1.\]
\end{enumerate}
Under this assumption $\mathcal{W}_\beta$ is a double-well potential with two distinct minimizers $\pm \mathfrak{s}$.  In the critical or subcritical case the asymptotic behavior will be completely different.
\begin{theorem}\label{thm:main}
 Consider the initial data $s_0\in L^1(\T^d)$ with $|s_0|\leq \mathfrak{s}$ and terminal data $g\in L^1(\T^d)$ with $|g(z)|\leq \frac{1}{2\beta}\Phi'(\mathfrak{s})$, where $\Phi$ is given in Proposition \ref{decomposition}. Then the following holds:
\begin{enumerate}[label=(\roman*)]
\item For every sequence $(s^\lambda,a^\lambda)$ with $\hat{s}^\lambda(0,\cdot) = s_0$ and uniformly bounded cost, there is a convergent subsequence in macroscopic variables, $\hat{s}^{\lambda} \to \bar{s}$ in $L^1 ([0,T]\times \mathbb{T}^d)$. Moreover $\hat{s}^\lambda \rightarrow\bar{s}\in BV((0,T)\times \mathbb{T}^d;\{-\mathfrak{s},\mathfrak{s}\})$ and
$$
    \liminf_{\lambda\rightarrow^+ 0} \mathcal{C}^\lambda(s^\lambda,a^\lambda) \geq \bar{V}(s_0,g,\bar{s}).
$$ 
\item For every $\bar{s}\in BV((0,T)\times \mathbb{T}^d;\{-\mathfrak{s},\mathfrak{s}\})$, there exists a sequence $(s^\lambda,a^\lambda)$ such that $\hat{s}^\lambda\rightarrow \bar{s}$ in $L^1([0,T]\times \mathbb{T}^d), \,\hat{s}^\lambda(0,\cdot) = s_0$, and
$$
    \limsup_{\lambda\rightarrow^+ 0} \mathcal{C}^\lambda(s^\lambda,a^\lambda) \leq \bar{V}(s_0,g,\bar{s}).
$$
\end{enumerate}
\end{theorem}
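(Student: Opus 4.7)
The plan is to follow the general $\Gamma$-convergence scheme for Allen–Cahn type functionals, adapted to the mixed local/nonlocal structure and the extra boundary-layer contributions, in the spirit of \cite{modica1987gradient} and \cite{alberti1998non}. The key starting point is the decomposition of Corollary~\ref{decomposition}, which writes $\mathcal{C}^\lambda$ as the sum of $\mathcal{G}^\lambda$ and explicit initial/terminal boundary integrals. By Lemma~\ref{lem:bdd} we may restrict throughout to competitors taking values in $[-\mathfrak{s},\mathfrak{s}]$.

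\emph{Compactness.} A Modica–Mortola type argument in the time variable would give a BV bound on $\tau \mapsto \hat s^\lambda(\tau,\cdot)$: the AM–GM inequality applied to the product of $\lambda^{-1}\mathcal{W}_\beta(\hat s)$ and $\lambda^{-1}\Psi(\hat s,\lambda \partial_\tau \hat s)$, combined with the bound \eqref{estimate}, produces a control on $\int |\partial_\tau h(\hat s^\lambda)|$ for a suitable primitive $h$ that separates the two wells $\pm\mathfrak{s}$. In the spatial variable one uses the nonlocal analog from \cite{alberti1998non}: the combination of $\lambda^{-1}\mathcal{W}_\beta(\hat s)$ with the nonlocal term yields a BV bound for $\hat s^\lambda(\tau,\cdot)$ (up to reparameterization by a monotone function). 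Together with the uniform $L^\infty$ bound this yields $L^1$ precompactness, and any subsequential limit lies in $BV((0,T)\times \T^d;\{\pm\mathfrak{s}\})$ with well-defined traces at $\tau = 0,T$.

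\emph{Liminf inequality.} I would localize $\mathcal{G}^\lambda$ to yield a sequence of nonnegative measures $\mu^\lambda$ on $[0,T]\times\T^d$, pass to a weak-$\ast$ limit $\mu$, and use the Besicovitch differentiation theorem to split $\mu$ into its parts on $\Sigma$, $\{\tau=0\}$ and $\{\tau=T\}$. For $\mathcal{H}^d$-a.e.\ $(\tau_0,z_0)\in \Sigma$ with space-time normal $\nu$, the blow-up $R_{(\tau_0,z_0),\lambda}\hat s^\lambda$ converges (up to modification) to the jump $\pm\mathfrak{s}$ on either side of $\nu$ in a thin strip of width $\lambda R$; applying the scaling identity of Lemma~\ref{lem:scaling_identity} and a quantitative patching estimate to make the blow-up genuinely $\square$-periodic (this is the purpose of the forthcoming Lemma~\ref{p.periodization}) forces
\[\frac{d\mu}{d\mathcal{H}^d\llcorner\Sigma}(\tau_0,z_0)\;\geq\; \bar L_R(\nu),\]
and sending $R\to\infty$ gives $\bar L(\nu)$. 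The contributions near $\{\tau=0\}$ and $\{\tau=T\}$ are treated in the same way, blowing up only in the time direction; the $\Phi$ boundary term in Corollary~\ref{decomposition} recombines with $\mathcal{G}^\lambda$ to produce exactly $V^{init}_R$ (using that $\hat s^\lambda(0,\cdot) = s_0$ as a boundary constraint) and $V^{end}_R$ (absorbing the $g\,\hat s(T,z)$ term), respectively.

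\emph{Limsup (recovery sequence).} By a standard density argument one may reduce to $\bar s$ whose discontinuity set $\Sigma$ is polyhedral. On each face with normal $\nu$, choose a near-optimal competitor $s^{\nu,R,\varepsilon}\in \mathcal{X}_R(\square)$ for $\bar L_R(\nu)$; near that face define $s^\lambda$ by the rescaling $s^{\nu,R,\varepsilon}(\lambda^{-1}(\tau-\tau_0),\lambda^{-1}(z-z_0))$, which equals $\pm\mathfrak{s}$ outside a $\lambda R$-strip. The competitors on adjacent faces are stitched together using Alberti-style patching; the cost of patching is controlled by the locality defect $\mathcal{N}^\lambda$ evaluated on a neighborhood of the faces' edges, which vanishes as $\lambda\to 0$ because $J$ has finite first moment \aref{Jmoment} and both pieces equal $\pm \mathfrak{s}$ on matching sides of the edge. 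At $\tau=0$ one inserts a near-optimal $\mathcal{X}_R^{init}(s_0(z),\bar s(0,z),\square)$ competitor, which by definition matches $s_0$ exactly, and analogously at $\tau=T$ via $V^{end}_R$. Taking $\lambda\to 0$, then $R\to\infty$, and finally the polyhedral approximation parameter to zero yields $\limsup_\lambda \mathcal{C}^\lambda(s^\lambda,a^\lambda)\leq \bar V(s_0,g,\bar s)$.

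\emph{Main obstacles.} The principal technical difficulties I expect are: (a) quantitative patching estimates sufficient to glue local profiles without creating spurious nonlocal interaction across the interpolation strip, since unlike the purely spatial setting of \cite{alberti1998non} the profiles have time dependence that must match in $C^1$ to avoid extra $\Psi$ cost; (b) exact enforcement of the initial condition $\hat s^\lambda(0,\cdot) = s_0$ in the recovery sequence, which is not just $\pm\mathfrak{s}$ and therefore requires the full $V^{init}$ construction; and (c) bookkeeping of the $\Phi$ boundary term, which is responsible for the asymmetry \eref{vendformula} between $V^{init}$ and $V^{end}$ and must travel cleanly through both the liminf localization and the limsup patching.
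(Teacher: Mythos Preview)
Your proposal is correct and follows essentially the same approach as the paper: compactness via the temporal Modica--Mortola/spatial Alberti--Bellettini split, the liminf via Fonseca--M\"uller blow-up combined with the scaling identity and periodization, and the limsup via the polyhedral induction of \cite{alberti1998non} extended to general data by Reshetnyak continuity. You have also correctly identified the main new technical hurdle, namely that patching cannot be done discontinuously because of the $\Psi$ term, which is precisely what drives the $\lambda$-scale temporal mollification in \pref{patchingestimate}.
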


Let us briefly outline the strategy carried out in this section to prove the above convergence result.  

\medskip

In \sref{compactness-lemma} we show that, in an appropriate sense, the cost $\mathcal{C}^\lambda$ asymptotically controls the $BV$ norm and so sequences $s^\lambda$ with bounded cost $\mathcal{C}^\lambda$ satisfy appropriate compactness properties. The argument combines ideas for local and nonlocal Allen-Cahn problems in a slightly delicate, but largely standard, way.  Note that in this stage we are yet to characterize the macroscopic cost $\bar{V}$.

In \sref{patching-lemma}, we prove several technical ``patching" results which are key to the later $\Gamma$-convergence arguments. These are quantitative versions of localization results that are naturally needed to ensure that our macroscopic Lagrangian depends locally only on the normal directions of the interface between the state $-\mathfrak{s}$ and $\mathfrak{s}$.  More precisely, we show that sequences of test minimizers defined in disjoint domains can be patched along a joint boundary without increasing the energy too much as long as an appropriate notion of trace matches along this joint boundary.  The ideas in this section are inspired by \cite{alberti1998non}, but the argument is technically more difficult because the cost functional requires some microscopic regularity in the temporal direction.

Then in \sref{liminf} and \sref{limsup} we carry out the typical two part $\Gamma$-convergence argument.  

\medskip

The argument for the lower bound inequality in \sref{liminf} follows a classical general technique introduced by Fonseca and M\"uller \cite{Fonseca1993}: the problem can be reduced to establishing a pointwise lower bound on the densities for a subsequential limit of the particular test minimizer sequence $s^\lambda$.  In technical terms the patching and compactness lemmas play a key role here.

For the upper bound inequality in \sref{limsup} we follow a beautiful idea introduced by Alberti and Bellettini \cite{alberti1998non} of induction on polyhedral regions.  By approximating with polyhedral regions instead of smooth sets, Alberti and Bellettini reduced the entire difficulty of controlling lower order terms related to the ``bending" hyperplanes to a relatively simple patching argument where polyhedral test regions meet transversally to the interface.  This argument adapts nicely to our setting because we have also established a technique for patching local test minimizers.

\subsection{Compactness}\label{s.compactness-lemma} In this section we show that sequences with bounded $\mathcal{G}^\lambda$ are precompact in $L^1$ and all cluster points are indicator functions of sets of bounded variation.  As we have explained, the energy $\mathcal{G}^\lambda$ is understood to measure the space-time surface area of the interface between the $\pm \mathfrak{s}$ phases.  Thus a $BV$ -like compactness result is to be expected.  We note that the estimates we obtain are not uniform as $\beta\, \hat{J}\rightarrow_+1$, , i.e., $\mathfrak{s}\rightarrow_+0$,  reflecting the possibility of some more complex phenomena occurring near the critical parameter values.

Of course this type of result is well known for Allen-Cahn \cite{modica1987gradient} and nonlocal Allen-Cahn functionals \cite{alberti1998non}.  Our functional is a mix of the two, and with some technical tricks inspired by the two cases we can prove the compactness.

Our first step is to really make a decomposition into a typical local Allen-Cahn type functional measuring the temporal variations, and a nonlocal Allen-Cahn functional measuring the spatial variations:
\[
G^\lambda(\hat{s}; A') =Y^\lambda(\hat{s};A')+X^\lambda(\hat{s};A') 
\]
where 
\[ 
Y^\lambda(\hat{s};A') :=\int_{A'}\Big[\frac{1}{2\lambda}\mathcal{W}_\beta\big(\hat{s}(\tau,z)\big) +\frac{1}{2\beta\, \lambda}\Psi\big(\hat{s}(\tau,z),\lambda\, \partial_\tau\hat{s}(\tau,z)\big)\Big] dz
\]
and
\[ 
    X^\lambda(\hat{s};A') := \int_{A'}\frac{1}{2\lambda}\mathcal{W}_\beta\big(\hat{s}(\tau,z)\big) \ dz+ \frac{1}{4\lambda}\int_{A'}\int_{A'} J^\lambda(z -w)\big(\hat{s}(\tau,z) - \hat{s}(\tau,w)\big)^2\ dw\, dz.
\]
The space-time energy splits analogously
\[ \mathcal{G}^\lambda(\hat{s},A) = \mathcal{Y}^\lambda(\hat{s};A)+\mathcal{X}^\lambda(\hat{s};A) \ \hbox{ for } \ A \subset \R^{d+1}\]
where $\mathcal{X}^\lambda$ and $\mathcal{Y}^\lambda$ are naturally defined as temporal integrals of $X^\lambda$ and $Y^\lambda$ as was done for $\mathcal{G}^\lambda$ in (\ref{eqn:space_time_energies}).

\begin{proposition}\label{prop:compactness}
Let $\Omega \subset (0,T) \times \T^d$ be a polyhedral space-time region and $\hat{s}^\lambda: \Omega\to [-\mathfrak{s},\mathfrak{s}]$ be a sequence as $\lambda\rightarrow^+0$ with 
$$\sup_{\lambda>0}\mathcal{G}^{\lambda}\big(\hat{s}^\lambda;\Omega)<+\infty.$$
We assume that $\beta\, \hat{J}>1$.  Then $\hat{s}^\lambda$ is relatively compact in $L^1( \Omega)$ and each of its cluster points belongs to $BV( \Omega;\{-\mathfrak{s},\mathfrak{s}\})$.
\end{proposition}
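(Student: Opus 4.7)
The plan is to obtain a uniform bound on the $BV(\Omega)$ norm of a monotone rearrangement $\phi(\hat{s}^\lambda)$ by handling the two pieces of the decomposition $\mathcal{G}^\lambda = \mathcal{Y}^\lambda + \mathcal{X}^\lambda$ (displayed just before the proposition) separately, and then converting this to $L^1$ compactness of $\hat{s}^\lambda$. Fix $\phi \in C^1([-\mathfrak{s},\mathfrak{s}])$ with $\phi'(S) = \sqrt{\mathcal{W}_\beta(S)}$. By Proposition~\ref{p.W_decomposition}(2) the potential $\mathcal{W}_\beta$ is strictly positive on $(-\mathfrak{s},\mathfrak{s})$, so $\phi$ is strictly increasing on $[-\mathfrak{s},\mathfrak{s}]$ with continuous inverse; any $L^1$-compactness of $\phi(\hat{s}^\lambda)$ therefore transfers to $\hat{s}^\lambda$.

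For the temporal estimate, I would show $\|\partial_\tau \phi(\hat{s}^\lambda)\|_{L^1(\Omega)} \lesssim \mathcal{Y}^\lambda(\hat{s}^\lambda;\Omega)$ by a Modica--Mortola type argument, splitting $\Omega$ according to the size of $\lambda|\partial_\tau \hat{s}^\lambda|$. Since $|\hat{s}^\lambda|\leq\mathfrak{s}<1$, the quantity $\sqrt{1-(\hat{s}^\lambda)^2}$ is bounded below by some $c_\beta>0$. On the small velocity set $\{\lambda|\partial_\tau\hat{s}^\lambda|\leq c_\beta\}$, the bound (\ref{estimate}) gives $\Psi(\hat{s}^\lambda,\lambda\partial_\tau\hat{s}^\lambda) \gtrsim \lambda^2(\partial_\tau\hat{s}^\lambda)^2$, and Young's inequality
\[
\sqrt{\mathcal{W}_\beta(\hat{s}^\lambda)}\,|\partial_\tau\hat{s}^\lambda| \leq \tfrac{1}{2\lambda}\mathcal{W}_\beta(\hat{s}^\lambda) + \tfrac{\lambda}{2}(\partial_\tau\hat{s}^\lambda)^2
\]
dominates $|\phi'(\hat{s}^\lambda)\partial_\tau\hat{s}^\lambda|$ by the density of $\mathcal{Y}^\lambda$. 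On the complementary large velocity set, the second branch of (\ref{estimate}) gives $\tfrac{1}{\lambda}\Psi(\hat{s}^\lambda,\lambda\partial_\tau\hat{s}^\lambda) \gtrsim |\partial_\tau\hat{s}^\lambda|$, which controls $|\partial_\tau\hat{s}^\lambda|$ directly in $L^1$; since $\phi'$ is bounded on $[-\mathfrak{s},\mathfrak{s}]$, this again dominates $|\partial_\tau\phi(\hat{s}^\lambda)|$ pointwise.

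For the spatial estimate, I would follow the classical nonlocal Allen-Cahn argument of \cite{alberti1998non} applied slicewise in $\tau$: the functional $X^\lambda(\hat{s}^\lambda(\tau,\cdot);\Omega_\tau)$ is of precisely the mixed double-well plus nonlocal type they consider. Combined with the first moment assumption \aref{Jmoment}, that argument yields an $L^1$ translation bound $\int_{\Omega_\tau}|\hat{s}^\lambda(\tau,z+\eta)-\hat{s}^\lambda(\tau,z)|\,dz \lesssim (1+X^\lambda)|\eta|$ for small $\eta$, and integration in $\tau$ gives a uniform spatial total variation bound for $\hat{s}^\lambda$. Since $\phi$ is Lipschitz on $[-\mathfrak{s},\mathfrak{s}]$, the same bound holds for $\phi(\hat{s}^\lambda)$. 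Combining with the temporal bound, $\phi(\hat{s}^\lambda)$ is uniformly bounded in $BV(\Omega)$; Rellich--Kondrachov then yields a subsequential $L^1$-limit $\bar{\psi}\in BV$, and uniform continuity of $\phi^{-1}$ together with dominated convergence upgrades this to $\hat{s}^\lambda \to \bar{s}:=\phi^{-1}(\bar{\psi})$ in $L^1(\Omega)$. Finally, the bound $\int_\Omega \mathcal{W}_\beta(\hat{s}^\lambda)\,dz\,d\tau \leq \lambda\,\mathcal{G}^\lambda(\hat{s}^\lambda;\Omega) \to 0$ together with Fatou's lemma forces $\mathcal{W}_\beta(\bar{s})=0$ a.e., hence $\bar{s}\in\{\pm\mathfrak{s}\}$ a.e., and $BV$ lower semicontinuity places $\bar{s}\in BV(\Omega;\{\pm\mathfrak{s}\})$.

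The principal obstacle is the temporal estimate: because $\Psi$ has only $|V|\log|V|$ growth at infinity rather than quadratic growth, the standard Modica--Mortola coupling via Young's inequality fails globally, and one must split into small and large velocity regions and exploit the two different lower bounds on $\Psi$ from Proposition~\ref{p.W_decomposition}. A secondary technical point is correctly extracting an $L^1$ translation bound from $\mathcal{X}^\lambda$ at the kernel scale $|\eta|\sim\lambda$, which is precisely where the first moment assumption \aref{Jmoment} is used; this part is standard given \cite{alberti1998non} but needs attention because of the factor of $\lambda^{-1}$ in front of the nonlocal term.
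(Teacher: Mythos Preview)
Your overall strategy matches the paper's: split $\mathcal{G}^\lambda = \mathcal{Y}^\lambda + \mathcal{X}^\lambda$, handle the temporal piece by a Modica--Mortola argument with a velocity split, and handle the spatial piece via \cite{alberti1998non}. Your temporal argument is essentially the paper's (the paper calls your $\phi$ by $\mathbb{W}$).

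The gap is in the spatial step. You assert that the Alberti--Bellettini argument yields a linear $L^1$ translation bound and hence a uniform spatial total-variation bound on $\hat{s}^\lambda$ itself. That is not what \cite{alberti1998non} proves, and in general it is false: the nonlocal term $\tfrac{1}{4\lambda}\iint J^\lambda(z-w)|s(z)-s(w)|^2$ only sees scale $\sim\lambda$, and the \emph{square} means it does not directly control $\iint J^\lambda|s(z)-s(w)|$. What the paper does (faithfully following \cite{alberti1998non}) is introduce a cutoff $\varphi$ thresholding at $\pm\mathfrak{s}/2$, so that outside the small set $H_\tau^\lambda=\{|\hat{s}^\lambda|\leq\mathfrak{s}/2\}$ one has $|\varphi(\hat{s}^\lambda(z))-\varphi(\hat{s}^\lambda(w))|\lesssim |\hat{s}^\lambda(z)-\hat{s}^\lambda(w)|^2$, while $|H_\tau^\lambda|\lesssim\lambda$ by the double-well bound; it then \emph{mollifies} $\tilde{s}^\lambda:=\varphi(\hat{s}^\lambda)$ at scale $\lambda$ and bounds the genuine gradient $\nabla_z(\phi^\lambda*\tilde{s}^\lambda)$ by $\mathcal{X}^\lambda$. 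The $BV$ compactness is thus for the mollified sequence $\phi^\lambda*\tilde{s}^\lambda$, which is then shown to be $L^1$-equivalent to $\hat{s}^\lambda$.

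This also makes the combination step more delicate than you describe: the temporal bound lives on $\mathbb{W}(\hat{s}^\lambda)$ while the spatial bound lives on $\phi^\lambda*\varphi(\hat{s}^\lambda)$, two different transformations of $\hat{s}^\lambda$. The paper reconciles them by noting that $\varphi\circ\mathbb{W}^{-1}$ is Lipschitz (since $\mathbb{W}$ is strictly monotone on $[-\mathfrak{s}/2,\mathfrak{s}/2]$ and $\varphi$ is constant outside), so the temporal bound transfers to $\tilde{s}^\lambda$ and then to its spatial mollification. A secondary omission: the paper first extends $\hat{s}^\lambda$ from the polyhedral region $\Omega$ to all of $(0,T)\times\T^d$ via \pref{patchingestimate}, precisely so the mollification and the nonlocal convolution $J^\lambda*J^\lambda$ have no boundary artifacts.
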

The proof is a combination of the compactness arguments for local and nonlocal Allen-Cahn. 
\begin{proof}
By Proposition \ref{p.patchingestimate} we can extend $\hat{s}^\lambda$ to be equal to $\mathfrak{s}$ in $(0,T) \times \T^d$ and then replace this extension by an $L^1(\Omega)$ equivalent sequence defined on the entire $(0,T)\times\T^d$ and with $\mathcal{G}^\lambda(\hat{s}^\lambda;(0,T) \times \T^d) \leq CM$ where the constant depends on the domain $\Omega$.

Note that both $\mathcal{Y}^\lambda(\hat{s}^\lambda;(0,T) \times \T^d)\geq 0$ and $\mathcal{X}^\lambda(\hat{s}^\lambda;(0,T) \times \T^d)\geq 0$ so both are bounded by $M:= \sup_{\lambda>0}\mathcal{G}^\lambda(\hat{s}^\lambda;{(0,T) \times \T^d})$.  

First we estimate the time derivative using the bound on $\mathcal{Y}^{\lambda}$ and following a standard argument for local Allen-Cahn functionals. From the Young's inequality,
\[ \sqrt{2\beta^{-1}\lambda^{-2}\mathcal{W}_\beta(\hat{s})\Psi(\hat{s},\lambda \partial_\tau \hat{s})} \leq \lambda^{-1}\mathcal{W}_\beta(\hat{s})+\frac{1}{2\beta}\lambda^{-1}\Psi(\hat{s},\lambda\partial_\tau \hat{s}) \]
Using above with \eqref{estimate} for the set $|\lambda \partial_\tau \hat{s}| \leq 1$ and using the $\Psi$ bound with \eqref{estimate} for the rest, we arrive at
\begin{align*}
&\ \iint_{(0,T) \times \T^d}\Big[ \mathcal{W}_\beta\big(\hat{s}^{\lambda}(\tau,z)\big)^{1/2}|\partial_\tau\hat{s}^{\lambda}(\tau,z)| \chi{\{|\partial_\tau \hat{s}(\tau,z)|\leq \lambda^{-1}\}} +  |\partial_\tau\, \hat{s}(\tau,z)|\chi{\{|\partial_\tau\, \hat{s}(\tau,z)|\geq \lambda^{-1}\}}\Big]dz\, d\tau\\
& \quad \quad \quad \leq\ C\,  \mathcal{Y}^{\lambda}\big(\hat{s}^\lambda; {(0,T) \times \T^d}\big)\leq CM,
\end{align*}
Call $\mathbb{W}(s) := \int_0^s\mathcal{W}_\beta(s)^{1/2}ds$ so that we have proved
\begin{equation}\label{e.ddtWbound}
\iint_{(0,T) \times \T^d} \big| \frac{d}{d\tau} \mathbb{W} \big(\hat{s}(\tau,z)\big)\big| dz\, d\tau  \leq CM.
\end{equation}
\medskip

Next, we use the bound on $\mathcal{X}^\lambda$ to obtain a uniform bound for the spatial gradient of (a mollification of)
$$
    \tilde{s}^\lambda(t,z) := \varphi\big(\hat{s}^\lambda(\tau,z)\big),
$$
where $\varphi$ is the cut-off function 
\begin{equation}\label{cut_off}
    \varphi(s) := \begin{cases} \mathfrak{s} & s >\mathfrak{s}/2 \\
    2s& s\in [-\mathfrak{s}/2,\mathfrak{s}/2]\\
    -\mathfrak{s} & s < -\mathfrak{s}/2.
    \end{cases}
    \end{equation}
    The point of this cut-off is that it is (i) Lipschitz so it doesn't affect the temporal energy $\mathcal{Y}^\lambda$ too much, (ii) it simplifies the computation of the nonlocal part of the energy essentially concentrating the energy on the interface without changing the $L^1$ limit of the sequence (an idea of \cite{alberti1998non}).

Next we mollify at scale $\lambda$ by  $\phi^\lambda(z) := c\lambda^{-d}\phi(z/\lambda)$, where $\phi$ is a nonnegative  (not identically zero) smooth function with compact support  and total mass $c^{-1}$ that satisfies
$$
\phi \leq J*J \hbox{ and } |\nabla \phi|\leq J * J.
$$
The proof of the bound on $|\nabla_z (\phi^{\lambda}*\tilde{s}^{\lambda})|$ follows closely that of Theorem 3.1 in \cite{alberti1998non}.  First, the inequality
$$
    \int_{\T^d}\int_{\T^d} (J^\lambda * J^\lambda)(z-w)|\tilde{s}(\tau,z)-\tilde{s}(\tau,w)|dw\, dz \leq 2 \hat{J} \int_{\T^d}\int_{\T^d} J^\lambda(z-w)|\tilde{s}^\lambda(\tau,z)-\tilde{s}^\lambda(\tau,w)|dz\, dw
$$
is established by direct computation with a change of variables argument. The right hand side is decomposed using the set $H^\lambda_\tau = \{z: \hat{s}^\lambda(\tau,z) \in [-\mathfrak{s}/2,\mathfrak{s}/2]\}$. On ${(0,T) \times \T^d}\backslash H^\lambda_\tau\times {(0,T) \times \T^d}\backslash H^\lambda_\tau$ we have that
\begin{align*}
    |\tilde{s}^\lambda(\tau,z)-\tilde{s}^\lambda(\tau,w)| \leq &\  \frac{4}{\mathfrak{s}}|\hat{s}^\lambda(\tau,z)-\hat{s}^\lambda(\tau,w)|^2,
\end{align*}
using the fact that $\tilde{s}^{\lambda} \in \{\pm \mathfrak{s}\}$ away from $H_{\tau}^{\lambda}$.

Whereas in $H_\tau^\lambda$ we simply bound $|\tilde{s}^\lambda(\tau,z)-\tilde{s}^\lambda(\tau,w)|\leq 2\, \mathfrak{s}$. The area of $H^\lambda_\tau$ can be bounded by a constant times the integral of $\mathcal{W}_\beta(\hat{s})$, since there is $\rho>0$ such that $\mathcal{W}_\beta(s)\geq \rho$ when $s\in [-\mathfrak{s}/2,\mathfrak{s}/2]$.  Along with nonnegativity of the nonlocal term, we have
\begin{equation}\label{bound_set}
    |H^\lambda_\tau| \leq \frac{1}{\rho}\iint_{{(0,T) \times \T^d}} \mathcal{W}_\beta\big(\hat{s}^\lambda\big) d\tau dz \leq \frac{2\lambda}{\rho} M.
    \end{equation}
Therefore
\begin{align*}
    &\ 2 \hat{J} \int_{\T^d}\int_{\T^d} J^\lambda(z-w)|\tilde{s}^\lambda(\tau,z)-\tilde{s}^\lambda(\tau,w)|dz\, dw\\
    & \quad \quad \leq\ 2 \hat{J} \Big(\int_{\T^d\backslash H^\lambda_\tau}\int_{\T^d\backslash H^\lambda_\tau} \frac{4}{\mathfrak{s}} J^\lambda(z-w)|\tilde{s}^\lambda(\tau,z)-\tilde{s}^\lambda(\tau,w)|^2\, dz\, dw + 2\int_{\mathbb{T}^d}\int_{H^\lambda_\tau} 2\, \mathfrak{s}\, dz\, dw\Big)\, d\tau\\
    & \quad \quad \leq \ \lambda\, C\,  X^\lambda\big(\hat{s}(\tau,\cdot);\T^d\big). 
\end{align*}

We use this to estimate the error in mollification
\begin{align*}
    &\ \int_0^T\int_{\T^d} \big| (\phi^\lambda * \tilde{s}^\lambda(\tau,\cdot))(z)-\tilde{s}^\lambda(\tau,z)\big|dz\, d\tau\\
    & \quad \quad =\ \int_0^T\int_{\T^d} \Big|\int_{\T^d} \phi^\lambda(z-w)\big( \tilde{s}^\lambda(\tau,w)-\tilde{s}^\lambda(\tau,z)\big)dw\Big|dz\, d\tau \\
     & \quad \quad \leq\ \int_0^T\int_{\T^d} \int_{\T^d} \phi^\lambda(z-w)\big| \tilde{s}^\lambda(\tau,w)-\tilde{s}^\lambda(\tau,z)\big|dw\, dz\, d\tau \\
    & \quad \quad \leq\  \frac{1}{c} \int_0^T\int_{\T^d}\int_{\T^d} (J^\lambda * J^\lambda)(z-w)\big|\tilde{s}^\lambda(\tau,w)-\tilde{s}^\lambda(\tau,z)\big|dw\, dz\, d\tau\\
    & \quad \quad \leq\  \lambda\, C'\,  \mathcal{X}^\lambda\big(\hat{s};[0,T] \times \mathbb{T}_d\big).
\end{align*}
Using, for the final inequality, the estimates from the previous paragraph.  Thus we obtained
\begin{equation}\label{difference_mollified}
\int_0^T\int_{\T^d} \big| (\phi^\lambda * \tilde{s}^\lambda(\tau,\cdot))(z)-\hat{s}^\lambda(\tau,z)\big|dz\, d\tau \leq CM\lambda.
\end{equation}

By similar computations
\begin{align*}
    &\ \int_0^T\int_{\T^d} \big| \nabla (\phi^\lambda * \tilde{s}^\lambda(\tau,\cdot))(z)\big|dz \, d\tau\\
    & \quad \quad =\ \int_0^T\int_{\T^d}\Big|\int_{\T^d}  (\nabla \phi^\lambda)(z-w) \big(\tilde{s}^\lambda(\tau,w) -\tilde{s}^\lambda(\tau,z)\big)dw\Big|\, dz\, d\tau\\
    & \quad \quad \leq\ \frac{1}{c\, \lambda}\int_0^\tau\int_{\T^d}\int_{\T^d} (J^\lambda * J^\lambda)(z-w)\big|\tilde{s}^\lambda(\tau,w)-\tilde{s}^\lambda(\tau,z)\big|dw\, dz\, d\tau \leq C\mathcal{X}^\lambda\big(\hat{s}^\lambda;{(0,T) \times \T^d}\big).
\end{align*}

Let us now put together above bounds to obtain the global bound. Since $\mathbb{W} $ is invertible on $[-\mathfrak{s}/2,\mathfrak{s}/2]$,  by definition \eqref{cut_off} it follows that $\varphi \circ \mathbb{W}^{-1}$ is Lipschitz.  So using the previous inequality and \eref{ddtWbound} it follows that 
\begin{align*}
    \iint_{(0,T) \times \T^d}\big|D_{t,z}\phi^\lambda *\tilde{s}^\lambda(\tau,z)\big|dz\, d\tau&\leq \iint_{(0,T) \times \T^d}\Big(\big|\nabla\phi^\lambda *\tilde{s}^\lambda(\tau,z)\big| + \big|\phi^\lambda * \frac{d}{d\tau}\left[ (\varphi\circ \mathbb{W}^{-1}) (\mathbb{W}(\hat{s}^\lambda(\tau,z)))\right]\big|\Big)dz\, d\tau\\
    &\leq CM.
\end{align*}
By standard $BV$ compactness results there is a subsequence so that $\phi^\lambda * \tilde{s}\rightarrow \bar{s}$ strongly in $L^1$ and $\bar{s}\in BV({(0,T) \times \T^d})$ with
\[ \iint_{{(0,T) \times \T^d}} |D_{\tau,z} \bar{s}| \leq \liminf_{\lambda \to 0}\iint_{(0,T) \times \T^d}\big|D_{t,z}\phi^\lambda *\tilde{s}^\lambda(\tau,z)\big| \leq CM. \]

\medskip

Finally, we show the convergence of $\hat{s}^\lambda$. Let $K_\lambda := \{(\tau,z): \hat{s}^\lambda(\tau,z)\in [-\mathfrak{s}+\delta,\mathfrak{s}-\delta]\}$.  As in \eqref{bound_set}we have
$$
    |K_\lambda|\leq C(\delta)\, \lambda\,M,
$$
and, from \eqref{difference_mollified} and, since either $|\hat{s}-\tilde{s}|=0$ outside of $K_\lambda$ and otherwise $|\hat{s}-\tilde{s}|\leq 2$,  
\begin{align*}
    &\ \int_0^T\int_{\T^d}| \hat{s}^\lambda(\tau,z)-\phi^\lambda *\tilde{s}^\lambda(\tau,z)| dz\, d\tau\\
    \leq&\ \int_0^T\int_{\T^d}\Big(| \hat{s}^\lambda(\tau,z)-\tilde{s}^\lambda(\tau,z)|+ |\tilde{s}^\lambda(\tau,z)- \phi^\lambda *\tilde{s}^\lambda(\tau,z)|\Big) dz\, d\tau\\
    \leq&\ \delta\, T + 2|K_\lambda| + C'\, \lambda.
\end{align*}
Since $\delta$ is arbitrary,  the sequence $\hat{s}^\lambda$ is equivalent to $\phi^\lambda * \tilde{s}$ in $L^1$ and thus is relatively compact with all of its cluster points in $BV({(0,T) \times \T^d};\{-\mathfrak{s},\mathfrak{s}\})$.
\end{proof}

\subsection{Patching lemma}\label{s.patching-lemma}

In this section we develop a technical tool which will be essential in the proof of $\Gamma$ convergence.  Roughly speaking we look for a way to ``patch" test minimizers which are defined in disjoint domains to be a test minimizer in the union without increasing the total energy too much. To this end  we will need to control the increase of the nonlocal ``Dirichlet" energy and the increase of the local ``Dirichlet" energy for the cost in the form \eqref{cost:decomposition}. For the nonlocal part of the energy we will  follow the ideas in \cite{alberti1998non} section 2.  However, in \cite{alberti1998non} the actual patching can be done in a straightforward way, simply defining a new, possibly discontinuous, test minimizer piecewise. We cannot do this because the local energy penalizes large time derivatives by the term $\lambda^{-1}\Psi(u,\lambda\partial_tu)$ in the energy. Thus the presence of the local energy necessitates a smoother notion of patching.

We introduce a notion of ``trace" on $d-1$-dimensional surfaces, imitating the notions introduced in \cite{alberti1998non}. Define an auxiliary potential
\[\tilde{J}(h) := \int_0^1J\left(\frac{h}{t}\right)\left|\frac{h}{t}\right| \frac{dt}{t^d}\]
and note that from (A2)
\[ \int_{\R^d} \tilde{J}(h) dh = \int_{\R^d} |h|J(h) \ dh < + \infty.\]

Because most of the notions in this section are local we will often work with test spin fields on $(\tau,z)$ in subsets of $\R\times\R^d$.

\begin{definition}
For a function $u : \R^d \to [-1,1]$, an open set $A'$ in $\R^d$, a $d-1$-dimensional Lipschitz surface $\Sigma'$ in $\R^d$, and a $v : \Sigma \to [-1,1]$ define the spatial {\it $\lambda$-trace error}
\[ \textup{tr}_\lambda(u,v,A',\Sigma') := \int_{\Sigma'} \int_{z + \lambda\,  h \in A'}\tilde{J}(h)|u(z+\lambda\, h) - v(z)| dh\, d\mathcal{H}^{d-1}.\]
For a function $u : \R \times \R^d \to [-1,1]$, an open set $A$ in $\R^{d+1}$, a $d$-dimensional Lipschitz surface $\Sigma$ in $\R^{d+1}$  with normal vector field $\nu$, and a $v : \Sigma \to [-1,1]$ define the $\lambda$-trace error
\[ \textup{Tr}_\lambda(u,v,A,\Sigma) := \int_0^T \textup{tr}_\lambda(u,v,A_\tau,\Sigma_\tau) d\tau =\int_{\Sigma} \int_{z + \lambda\, h \in A_\tau}\tilde{J}(h)|u(z+\lambda\, h) - v(z)| dh |\nu_x|d\mathcal{H}^{d},\]
\end{definition}
where $A_\tau=\{z: (\tau,z) \in A\}$.
Note that finite energy fields do not necessarily have a true trace on space-like $d$-dimensional surfaces, the nonlocal energy only gives large scale not micro-scale regularity.  The notion of trace error circumvents this technical difficulty.

This leads to a notion of convergence of traces imitating \cite{alberti1998non}[Definition 2.1].  Note that we need to add some additional terms to our notion of trace convergence to deal with the temporal part of the energy.  First we define the distance to a space-time surface $\Sigma$ in the pure temporal variable
\begin{equation}\label{temporal} 
\mathfrak{t}((\tau,z),\Sigma) = \inf \{ |\sigma| : (\tau+\sigma,z) \in \Sigma\}.
\end{equation}

\begin{definition}\label{d.lambdatrace}
We say that the $\lambda$-traces on $\Sigma$ of a sequence $u^\lambda : A \to [-\mathfrak{s},\mathfrak{s}]$ converge to $v: \Sigma \to [-\mathfrak{s},\mathfrak{s}]$ if
\[\lim_{\lambda \to^+ 0}\left[\int_{\Sigma} |u^\lambda - v||\nu_t| d\mathcal{H}^{d} + \int_{A\cap\{\mathfrak{t}((\tau,z),\Sigma) \leq \lambda\}}\frac{1}{\lambda}\Psi_0\big(\lambda\, \partial_\tau u^\lambda(\tau,z)\big) \ d\tau\, dz +\textup{Tr}_\lambda(u^\lambda,v,A,\Sigma)\right] = 0\]
where we note that the trace of $u^\lambda$ on $\Sigma$ is defined $|\nu_t|\mathcal{H}^d$-almost everywhere on $\Sigma$ due to the superlinear energy bound on $\partial_\tau u$.  Recall from \pref{W_decomposition} that $\Psi_0(V) = \Psi(0,V)$.
\end{definition}

\begin{remark}
Note that the energy bound $\sup_\lambda \mathcal{G}^\lambda(u^\lambda;A)<+\infty$ morally is a uniform $BV$-norm bound and is not enough to show that the traces of $u^\lambda$ on a hypersurface $\Sigma$ lie in a strongly compact subset of $L^1(\Sigma,\mathcal{H}^{d})$.  Also, as mentioned earlier, the energy bound is not sufficient regularity to give a notion of trace for $u^\lambda$ on parts of $\Sigma$ where $|\nu_t| = 0$.

Thus, as remarked in \cite{alberti1998non}, the convergence of $\lambda$-traces is not so easy to verify for a specific surface.  On the other hand, if we take a foliation by Lipschitz hypersurfaces we can get convergence of the $\lambda$-traces on almost every surface in the foliation as we show in \lref{tracesonaesurface}.  
\end{remark}
\begin{lemma}\label{l.tracesonaesurface}
Suppose that $A$, $\Sigma$, and $u^\lambda$ are as in \dref{lambdatrace} and
\[ \sup_{\lambda>0}\int_{A}\frac{1}{\lambda}\Psi_0(\lambda\, \partial_\tau u^\lambda) \ d\tau\, dz < +\infty.\]
Let $g: A \to \R$ be a Lipschitz function with $|\grad_{\tau,z} g| = 1$ a.e. and $\Sigma_a$ be the $a$-level set of $g$.  Suppose that $u^\lambda \to u_0$ in $L^1(A)$. Then, along a subsequence, the $\lambda$ traces of $u^\lambda$ on $\Sigma_a$ relative to $A$ converge to $u_0$ for a.e. $a \in \R$.
\end{lemma}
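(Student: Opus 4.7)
My plan is to apply the coarea formula to each of the three non-negative terms appearing in Definition \ref{d.lambdatrace}, integrate over the parameter $a$, show that the resulting total tends to $0$ as $\lambda\to 0^+$, and then extract a subsequence realizing pointwise convergence for almost every $a$. Since $|\nabla_{\tau,z}g|=1$ almost everywhere, coarea gives $\int_A f\,d\tau\,dz = \int_\R \int_{\Sigma_a} f\,d\mathcal{H}^d\,da$ for non-negative $f$, and the unit normal to $\Sigma_a$ at $(\tau,z)$ may be taken to be $\nu^a(\tau,z)=\nabla g(\tau,z)$, so $|\nu_t^a|=|\partial_\tau g|\le 1$ and $|\nu_x^a|=|\nabla_z g|\le 1$ pointwise.

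The trace-difference term is handled directly: $\int_\R \int_{\Sigma_a} |u^\lambda - u_0||\nu_t^a|\,d\mathcal{H}^d\,da = \int_A |u^\lambda-u_0||\partial_\tau g|\,d\tau\,dz \le \|u^\lambda-u_0\|_{L^1(A)}\to 0$. For the nonlocal error $\textup{Tr}_\lambda(u^\lambda,u_0,A,\Sigma_a)$, after coarea and bounding $|\nu_x^a|\le 1$, the integrated quantity is at most $\int_A\int \tilde{J}(h)\,|u^\lambda(\tau,z+\lambda h)-u_0(\tau,z)|\,\mathbf{1}_{z+\lambda h\in A_\tau}\,dh\,d\tau\,dz$. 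Inserting $\pm u_0(\tau,z+\lambda h)$ and applying the triangle inequality splits this in two: the first piece is bounded by $\|u^\lambda-u_0\|_{L^1(A)}\int\tilde{J}(h)\,dh$, which vanishes because $\int\tilde{J}\le \int|h|J(h)\,dh<\infty$ by assumption \ref{a.Jmoment} and $u^\lambda\to u_0$ in $L^1$; the second, $\int\tilde{J}(h)\,\|u_0(\cdot,\cdot+\lambda h)-u_0\|_{L^1(A)}\,dh$, vanishes by dominated convergence, using $L^1$-continuity of translations for each fixed $h$ and the integrable majorant $2\|u_0\|_{L^1}\tilde{J}(h)$.

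The key new point is the $\Psi_0$-concentration term. For fixed $(\tau,z)$, the set $\{a:\mathfrak{t}((\tau,z),\Sigma_a)\le\lambda\}$ equals the image of the $1$-Lipschitz map $\sigma\mapsto g(\tau+\sigma,z)$ on $[-\lambda,\lambda]$, and hence has $\mathcal{L}^1$-measure at most $2\lambda$. Fubini therefore yields
\[
\int_\R \int_{A\cap\{\mathfrak{t}(\cdot,\Sigma_a)\le\lambda\}} \tfrac{1}{\lambda}\Psi_0(\lambda\,\partial_\tau u^\lambda)\,d\tau\,dz\,da \;\le\; 2\int_A \Psi_0(\lambda\,\partial_\tau u^\lambda)\,d\tau\,dz \;=\; 2\lambda \int_A \tfrac{1}{\lambda}\Psi_0(\lambda\,\partial_\tau u^\lambda)\,d\tau\,dz \;=\; O(\lambda)
\]
by the uniform energy bound hypothesized in the lemma.

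Summing the three estimates, the $L^1_a$-norm of the non-negative sum $f_\lambda(a)$ of the three densities in Definition \ref{d.lambdatrace} tends to zero, so a standard extraction produces a subsequence $\lambda_k\to 0^+$ along which $f_{\lambda_k}(a)\to 0$ for almost every $a\in\R$, which by definition is convergence of the $\lambda_k$-traces on $\Sigma_a$ to $u_0$. The only genuinely new obstacle compared with the pure nonlocal setup of \cite{alberti1998non} is the second term; once the geometric observation that the slice $\{a:\mathfrak{t}((\tau,z),\Sigma_a)\le\lambda\}$ has $\mathcal{L}^1$-measure $O(\lambda)$ is in hand, everything reduces to bookkeeping plus standard $L^1$-continuity of translations. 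The well-definedness of the trace of $u_0$ on $\Sigma_a$ for almost every $a$ is immediate from coarea applied to $u_0\in L^1(A)$.
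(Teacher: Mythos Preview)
Your proof is correct and follows essentially the same strategy as the paper: integrate the three nonnegative densities over the level parameter $a$ via the coarea formula, show the total is $o(1)$, and extract a subsequence. The paper packages all three terms into a single function $\phi_\lambda$ and defines $Q_\lambda(a)=\int_{\Sigma_a}\phi_\lambda\,d\mathcal{H}^d$, whereas you treat the terms separately; your handling of the $\Psi_0$ term via the observation that $\{a:\mathfrak{t}((\tau,z),\Sigma_a)\le\lambda\}=g(\tau+[-\lambda,\lambda],z)$ has Lebesgue measure at most $2\lambda$ is a slightly more direct (and arguably cleaner) route to the same $2\lambda M$ bound the paper obtains by Fubini on its $\int_{-\lambda}^{\lambda}\dots d\sigma$ term.
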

\begin{proof}
This proof is a slight generalization of \cite{alberti1998non}[Proposition 2.5]. We may suppose that $u^\lambda$ and $u_0$ are defined on $\R^{d+1}$, extended by $0$ away from $A$.  Define
\begin{equation}\label{phi_def}
 \phi_\lambda(\tau,z) := \int_{\R^d}\tilde{J}(h)|u^\lambda(\tau,z+\lambda h) - u_0(\tau,z)| dh+\left\{ \int_{-\lambda}^\lambda\frac{1}{\lambda}\Psi_0(\lambda \partial_\tau u^\lambda(\tau+\sigma,z)) {\bf 1}_{(\tau+\sigma,z) \in A}\ ds+|u^\lambda - u_0|\right\}|\partial_\tau\, g|  
 \end{equation}
and
\[ Q_\lambda(a) := \int_{\Sigma_a} \phi_{\lambda}(\tau,z) \ d\mathcal{H}^d, \qquad M := \sup_{\lambda>0}\int_{A}\frac{1}{\lambda}\Psi_0(\lambda\, \partial_\tau u^\lambda) \ d\tau\, dz. \] By co-area formula
\begin{align*}
\int_{\R} Q_\lambda(a) da &= \int_{A} \phi_\lambda(\tau,z)|\grad g| d\tau\, dz\\
&\leq \int_{\R^{d+1}} \int_{ \R^d}\tilde{J}(h)|u^\lambda(\tau,z+\lambda\, h) - u_0(\tau,z)| dh\, d\tau\, dz\\
&\quad \quad \quad+\int_{-\lambda}^\lambda\int_A\frac{1}{\lambda}\Psi_0(\lambda\, \partial_\tau u^\lambda(\tau+\sigma,z)) {\bf 1}_{(\tau+\sigma,z) \in A}\ d\tau\, dz\, d\sigma+\|u^\lambda -u_0\|_{L^1}\\
& \leq \int_{\R^{d+1}} \int_{ \R^d}\tilde{J}(h)|u^\lambda(\tau,z+\lambda\, h) - u^\lambda(\tau,z)| dh\, d\tau\, dz+\int_{\R^{d+1}} \int_{ \R^d}\tilde{J}(h)|u^\lambda(\tau,z) - u_0(\tau,z)| dh\, d\tau\, dz\\
&\quad \quad \quad +  2\lambda M + \|u^\lambda -u_0\|_{L^1}\\
&\leq \int_{\R^d}\tilde{J}(h)\|u_0(\cdot+\lambda\, h) - u_0\|_{L^1} \, dh +2M\lambda+C\|u^\lambda - u_0\|_{L^1}.
\end{align*}
Each term on the right converges to zero as $\lambda \to 0$.  Note that $\|u_0(\cdot+\lambda h) - u_0\|_{L^1(A)} \to 0$ for each fixed $h$ and the integrand is dominated by $2\|u_0\|_{L^1(A)}\tilde{J}(h)$.

Since $Q_\lambda(a) \geq 0$ it converges to zero in $L^1$ and so, up to a subsequence, it converges to zero pointwise a.e. $a \in \R$.
\end{proof}

We will also use another criterion for trace convergence, modified from \cite{alberti1998non}:
\begin{lemma}\label{l.trace-conv-criterion}
Consider $u^\lambda : A \to [-\mathfrak{s},\mathfrak{s}]$ and $v: \Sigma \to [-\mathfrak{s},\mathfrak{s}]$. If for almost every $(t,x) \in \Sigma$ the sequence $u^\lambda(\tau,z+\lambda\, h)$ converges to $v(\tau,z)$ locally uniformly on $h \in \lambda^{-1}(A_\tau - z)$, i.e.
\[ \lim_{\lambda \to 0}\sup_{h \in K \cap \lambda^{-1}(A_\tau - z)} |u^\lambda(\tau,z+\lambda\, h) - v(\tau,z)| = 0 \ \hbox{ for all $K \subset \R^d$ compact}\]
then
\[ \lim_{\lambda \to 0}\left[\int_{\Sigma} |u^\lambda - v||\nu_t| d\mathcal{H}^{d}+ \textup{Tr}_\lambda(u^\lambda,v,A,\Sigma)\right] = 0.\]
\end{lemma}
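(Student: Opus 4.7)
The plan is to show each of the two terms in the sum converges to zero independently, both via dominated convergence after an appropriate splitting. Both are pointwise consequences of the hypothesis, provided the tail in the variable $h$ can be controlled uniformly in $\lambda$.

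For the boundary integral $\int_{\Sigma}|u^{\lambda}-v||\nu_t|\,d\mathcal{H}^d$, I would apply the hypothesis with $K = \{0\}$ (or any small compact neighborhood of the origin). This yields $u^{\lambda}(\tau,z) \to v(\tau,z)$ for $\mathcal{H}^d$-a.e.\ $(\tau,z) \in \Sigma$ with $z \in A_\tau$. Since the integrand is uniformly bounded by $2\mathfrak{s}|\nu_t|$, which is $\mathcal{H}^d$-integrable on $\Sigma$ (a Lipschitz hypersurface sitting in a bounded space-time region), dominated convergence concludes this part. Note that where $|\nu_t|=0$ the integrand vanishes identically, so the (possible) absence of a trace on spacelike pieces of $\Sigma$ is irrelevant.

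For the nonlocal term $\textup{Tr}_\lambda(u^\lambda,v,A,\Sigma)$, the strategy is to split the inner integral in $h$ into a near-part and a tail. Fix $\varepsilon > 0$; using assumption \ref{a.Jmoment}, which guarantees $\int_{\R^d}\tilde{J}(h)\,dh < \infty$, choose $R > 0$ so that $\int_{|h|>R}\tilde{J}(h)\,dh < \varepsilon$. Then pointwise for $\mathcal{H}^d$-a.e.\ $(\tau,z)\in \Sigma$,
\begin{align*}
\int_{z+\lambda h\in A_\tau}\tilde{J}(h)|u^{\lambda}(\tau,z+\lambda h)-v(\tau,z)|\,dh
&\leq \Big(\sup_{h\in \overline{B_R} \cap \lambda^{-1}(A_\tau-z)}|u^{\lambda}(\tau,z+\lambda h)-v(\tau,z)|\Big)\int_{\R^d} \tilde{J}(h)\,dh \\
&\quad + 2\mathfrak{s}\int_{|h|>R}\tilde{J}(h)\,dh.
\end{align*}
The first summand tends to $0$ as $\lambda \to 0$ by the locally uniform convergence hypothesis (applied with $K = \overline{B_R}$), and the second is bounded by $2\mathfrak{s}\varepsilon$. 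Since $\varepsilon$ was arbitrary, the inner integral converges to zero pointwise on $\Sigma$. The integrand in the outer integral is dominated by $2\mathfrak{s}\,(\int\tilde{J})\,|\nu_x|$, which is $\mathcal{H}^d$-integrable on $\Sigma$, so a second application of dominated convergence gives $\textup{Tr}_\lambda \to 0$, completing the proof.

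The main obstacle, if one can call it that, is ensuring that the tail in $h$ is small \emph{uniformly in} $\lambda$. This is precisely the role of assumption \ref{a.Jmoment}: the finite first moment of $J$ is exactly what guarantees $\tilde{J} \in L^1(\R^d)$, and without this integrability the locally uniform convergence hypothesis would not be enough to conclude. The rest is a routine dominated convergence exercise.
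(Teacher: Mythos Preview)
The paper states this lemma without proof, citing it as a modification of a criterion from \cite{alberti1998non}, so there is no in-paper argument to compare against. Your proof is correct and is precisely the standard argument one expects: dominated convergence for the $|\nu_t|$-weighted boundary term, and for $\textup{Tr}_\lambda$ a near/far splitting in $h$ using the integrability of $\tilde{J}$ (assumption \ref{a.Jmoment}) to kill the tail uniformly in $\lambda$, followed by dominated convergence over $\Sigma$.

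One small remark on the first term: applying the hypothesis literally at $h=0$ requires $z\in A_\tau$, which can fail when $(\tau,z)$ lies on $\partial A$. In the applications in this paper the recovery sequences are continuous on $\overline{A}$ so this is a non-issue; more generally, where $|\nu_t|>0$ one can take $h\to 0$ through admissible values and use that the temporal trace is the limit from the interior. This is a routine technicality and does not affect the validity of your argument.
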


Now we move forward to prove a bound on the patching error in terms of the tracial quantities we have defined. First, we recall a definition from \cite{alberti1998non} which was used for the nonlocal energy control.
\begin{definition}
We say $\Sigma$ {\it strongly divides} $A_-$ and $A_+$ if $\Sigma$ is the Lipschitz boundary of some set $\Omega$ with $A_+ \subset\Omega$ and $A_- \subset \R^{d+1} \setminus \Omega$.
\end{definition}
We recall a localization Lemma of \cite{alberti1998non}.

\begin{lemma}\label{lem:localization}
    Suppose that $A_\pm$ are disjoint subsets of $\R^{d+1}$ and are strongly divided by $\Sigma$.  Suppose further that $u^\lambda:A_+\cup A_-\rightarrow (-1,1)$ and 
    \[\lim_{\lambda \to 0}\textup{Tr}_\lambda(u,v_\pm,A_\pm,\Sigma) = 0.\]
    Then the discrepancy cost $\mathcal{N}^\lambda$ defined in \eqref{eqn:space_time_energies}  satisfies  $$
        \limsup_{\lambda\rightarrow0+} \mathcal{N}^\lambda(u^\lambda;A_+,A_-)\leq \frac{\hat{J}}{2} \int_{\Sigma}|v_+-v_-||\nu_x|d\mathcal{H}^{d}.
    $$
\end{lemma}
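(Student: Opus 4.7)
The plan is to adapt the approach for purely spatial nonlocal functionals of \cite{alberti1998non} to this space-time setting. Since $\mathcal{N}^{\lambda}$ is the time integral of the spatial discrepancy $N^{\lambda}(u^{\lambda}(\tau,\cdot);A_{+,\tau},A_{-,\tau})$ and the coarea identity applied to the time projection gives $\int_{\Sigma} f |\nu_x|\,d\mathcal{H}^{d}=\int d\tau \int_{\Sigma_\tau} f\,d\mathcal{H}^{d-1}$, I would first derive a pointwise-in-$\tau$ upper bound and then apply Fatou's lemma to pass the $\limsup$ through the time integral. Within each time slice, using $|u^{\lambda}|,|v_{\pm}|\leq 1$ and the triangle inequality through the crossing point $p=p(z,w)\in\Sigma_\tau$ of the segment $[z,w]$ (unique for $\lambda|h|$ smaller than the Lipschitz scale of $\Sigma$), I would begin from
\[ (u^{\lambda}(z)-u^{\lambda}(w))^2 \leq 2\bigl(|u^{\lambda}(z)-v_+(p)|+|v_+(p)-v_-(p)|+|v_-(p)-u^{\lambda}(w)|\bigr),\]
which splits $N^{\lambda}$ into three pieces.

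The key step is an interface-adapted change of variables: set $w=z+\lambda h$ and then parametrize $z=p-s\lambda h$, $w=p+(1-s)\lambda h$ for $p\in\Sigma_\tau$, $s\in(0,1)$, with Jacobian $dz\,dw=\lambda^{d+1}|h\cdot\nu_{\Sigma_\tau}(p)|\,ds\,d\mathcal{H}^{d-1}(p)\,dh$. For the two pieces containing $|u^{\lambda}-v_\pm|$, a further substitution $\xi=-s\lambda h$ (respectively $\xi=(1-s)\lambda h$) combined with the identity
\[ \int_0^1 J(\xi/s)\,|\xi|\, s^{-d-1}\,ds = \tilde{J}(\xi)\]
rewrites them, up to the harmless factor $|h\cdot\nu|/|h|\leq 1$, exactly in the form defining $\textup{Tr}_{\lambda}(u^{\lambda},v_{\pm},A_{\pm},\Sigma)$. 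By hypothesis these trace errors vanish, so the two corresponding pieces contribute nothing to $\limsup_{\lambda\to 0^+}\mathcal{N}^{\lambda}$.

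The middle piece, whose integrand $|v_+(p)-v_-(p)|$ depends only on $p$, evaluates explicitly: the $ds$-integral on $(0,1)$ yields a factor $1$ that cancels the outer $\lambda^{-1}$ against the Jacobian's $\lambda$, and the remaining $h$-integral produces $\int_{h\cdot\nu<0} J(-h)|h\cdot\nu(p)|\,dh$, finite by assumption \ref{a.Jmoment}. Bounding this by $\tfrac{1}{2}\hat J$ (from the normalization of $J$, together with the linearization $|v_+-v_-|^2\leq 2|v_+-v_-|$ coming from $|v_\pm|\leq 1$ to absorb the square generated at the start), and reassembling via the coarea identity, gives the claimed upper bound $\tfrac{\hat J}{2}\int_{\Sigma}|v_+-v_-||\nu_x|\,d\mathcal{H}^d$.

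The main technical obstacle is showing that the $(p,s)$ parametrization is globally well-defined, i.e., that for the relevant range of $h$ the segment $[z,z+\lambda h]$ crosses $\Sigma_\tau$ at a single point. This is handled by splitting the $h$-integral: for $|h|$ below a threshold that vanishes slower than $\lambda^{-1}$ the strong-division hypothesis together with the Lipschitz regularity of $\Sigma$ gives uniqueness, while the complementary tail $\int_{|h|>R}J(h)|h|\,dh\to 0$ as $R\to\infty$, guaranteed by \ref{a.Jmoment}, is absorbed into the vanishing-error terms. A secondary subtlety is that $v_\pm$ are defined only $|\nu_x|\mathcal{H}^d$-a.e.\ on $\Sigma$; this is precisely compatible with the weight $|\nu_x|$ appearing on the right-hand side, and the $(p,s)$ substitution degenerates on $\{|\nu_x|=0\}$, so no contribution is lost.
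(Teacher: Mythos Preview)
Your approach is essentially the same as the paper's: the paper simply cites \cite{alberti1998non} for the time-slice estimate and then says the result follows ``by integrating in time and using co-area formula,'' which is exactly the structure you describe (time-slice bound via the segment--crossing-point change of variables producing the $\tilde J$ kernel, plus the identity $\int_\Sigma f\,|\nu_x|\,d\mathcal{H}^d=\int d\tau\int_{\Sigma_\tau}f\,d\mathcal{H}^{d-1}$).

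Two small remarks. First, the appeal to Fatou is unnecessary and points the wrong way: the Alberti--Bellettini argument gives, for every $\lambda$, a pre-limit inequality of the form
\[
N^\lambda\bigl(u^\lambda(\tau,\cdot);A_{+,\tau},A_{-,\tau}\bigr)\ \le\ (\text{main term})_\tau + C\,\textup{tr}_\lambda\bigl(u^\lambda(\tau,\cdot),v_\pm,A_{\pm,\tau},\Sigma_\tau\bigr),
\]
so integrating in $\tau$ immediately yields $\mathcal{N}^\lambda\le(\text{integrated main term})+C\,\textup{Tr}_\lambda$, and one just takes $\limsup_{\lambda\to0}$ of both sides. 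No interchange of limit and integral is needed. Second, your constant tracking at the end is not quite right: the $h$-integral you obtain, $\int_{h\cdot\nu<0}J(-h)\,|h\cdot\nu|\,dh$, is (half of) a directional first moment of $J$, and there is no reason this is bounded by $\tfrac12\hat J$ under (A1)--(A2) alone. The paper's applications of the lemma only ever use it with a generic constant (or with $v_+=v_-$, where the right-hand side vanishes), so the precise value $\hat J/2$ is not actually needed; but if you want to state a sharp constant you should keep the first-moment quantity rather than $\hat J$.
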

\begin{proof}
    In \cite{alberti1998non}, this is proved for each time-slice, and the result is obtained simply by integrating in time and using co-area formula.  
\end{proof}

The next result shows how to patch test minimizers across a regular (Lipschitz) boundary. As in \cite{alberti1998non} patching creates extra nonlocal energy due to the nonlocal defect.  However now we also have a local term in the energy $\lambda^{-1}\Psi(u,\lambda\, \partial_\tau u)$ which grows superlinearly in the $\partial_\tau u$ variable.  This means that we cannot simply patch discontinuously, we need to make a regularization at the $\lambda$ length scale across the patching boundary. The next proposition shows that such a regularization can be made, at an additional energy cost which is controlled by a trace error of the type introduced in \dref{lambdatrace}. This result addresses the temporal Dirichlet type energy which is not present in \cite{alberti1998non}.  

\begin{proposition}[Defect estimate]\label{p.patchingestimate}
Let $A$ be an open set, ${\Sigma}$ be a finite union of subsets of affine $d$-dimensional planes with a normal direction $\nu \in S^d$ defined $\mathcal{H}^d|_\Sigma$-a.e., and $u : A \to [-\mathfrak{s},\mathfrak{s}]$ with $\mathcal{G}^\lambda(u;A) < +\infty$.  Let $u_\pm$ be the respective traces of $u$ on $\Sigma \cap \{|\nu_t| > 0\}$.

Then there is $\tilde{u}: (A \cup \Sigma)^o \to [-\mathfrak{s},\mathfrak{s}]$ such that $\tilde{u} = u$ outside of a $\lambda$ neighborhood of $\Sigma$ and for every $\delta>0$ and for any subregion $B \subset A$
\begin{align}
& \mathcal{G}^{\lambda}_{loc}(\tilde{u};(A \cup \Sigma)^o) -\mathcal{G}^\lambda_{loc}(u;A)+|\mathcal{N}^\lambda(\tilde{u};B,B)-\mathcal{N}^\lambda(u;B,B)|\notag\\
&\quad \quad \leq  C\int_{{\Sigma}} \left\{ |u_+ - u_-|+\lambda+\delta\right\} |\nu_t| d\mathcal{H}^d  +  C\delta^{-1}\left[\frac{1}{\lambda}\int_{A\cap\{0<\mathfrak{t}((\tau,z),\Sigma) \leq \lambda\}}\Psi_0(\lambda\, \partial_\tau u) \ d\tau\, dz. \right]\label{e.defectRHS}
\end{align}
Here $\mathcal{G}^\lambda_{loc}$ is defined in  \eqref{eqn:space_time_energies} and the constants $C$ depend on $\Sigma$.
\end{proposition}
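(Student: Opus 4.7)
My plan is to build $\tilde u$ by modifying $u$ only in a $\lambda$-tubular neighborhood of $\Sigma$ and to estimate the temporal smoothing cost and the nonlocal kernel defect separately. Because $\Sigma$ is a finite union of affine $d$-planes and both energies act at scale $\lambda$, I would first partition $\Sigma$ into disjoint affine faces and handle one at a time; pieces with $|\nu_t|=0$ do not appear in \eqref{e.defectRHS}, so I simply keep $\tilde u=u$ there. Fix a transversal face $\Sigma_0$ with normal $\nu$, $|\nu_t|>0$, and parametrize its $\lambda$-neighborhood by $(\xi,\sigma)$ with $\xi\in\Sigma_0$ and signed normal coordinate $\sigma=(\tau,z)\cdot\nu$.

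The construction is a smooth interpolation across the $\lambda$-strip combined with a Fubini-type slice selection. For almost every $\xi$, in the annular slab $\{\delta\lambda\le|\sigma|\le\lambda\}$, a mean value argument in $\sigma$ provides a pair of slices at which $u(\cdot,\xi)$ is $L^1$-close to the traces $u_\pm(\xi)$; the $\delta^{-1}$ factor in \eqref{e.defectRHS} reflects that the bound on the associated temporal energy density at the selected slices is $\delta^{-1}$ times the total energy of $u$ in the slab. On the core $\{|\sigma|\le\delta\lambda\}$, $\tilde u$ is defined by monotone interpolation in the time-like direction (aligned with the sign of $\nu_t$) between those selected slice values, smoothly glued to $u$ across the annular region, so that $\lambda\partial_\tau\tilde u$ is of order $|u_+-u_-|\,|\nu_t|$ modulated by the cutoff derivative, while $\tilde u=u$ outside the $\lambda$-strip.

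The energy estimates then decompose. On the interpolation core, the sublinear upper bound $\Psi(S,V)\lesssim|V|$ from \eqref{estimate} yields the leading term $\int_\Sigma |u_+-u_-||\nu_t|\, d\mathcal{H}^d$ after integrating over the layer. The $\mathcal{W}_\beta$ contribution on a layer of volume $\lambda|\Sigma|$ gives the $\lambda|\nu_t|\mathcal{H}^d$ term, and the nonlocal discrepancy $|\mathcal{N}^\lambda(\tilde u;B,B)-\mathcal{N}^\lambda(u;B,B)|$ is of the same order via the first moment bound \aref{Jmoment} (only pairs at distance $O(\lambda)$ see the modification). On the annular gluing region, the cutoff length scale contributes $\delta|\nu_t|\mathcal{H}^d$, while the temporal energy inherited from $u$ contributes the $\delta^{-1}\lambda^{-1}\int_{\{0<\mathfrak{t}((\tau,z),\Sigma)\le\lambda\}}\Psi_0(\lambda\partial_\tau u)$ tail; the convexity and subadditivity of $\Psi$ in $V$ from \pref{W_decomposition}, through an inequality of the form $\Psi(S,a+b)\le 2\Psi(S,a)+2\Psi(S,b)$, is what lets me split the interpolation cost from the inherited cost additively.

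The main obstacle is exactly this averaging-versus-interpolation tradeoff that produces the $\delta$ dependence. Finite-energy fields carry only $L^1$-type temporal traces, so the interpolation cannot be anchored at pointwise endpoints, and pinning down good gluing slices without losing control forces the $\delta^{-1}$ tail in \eqref{e.defectRHS}. The degenerate limit $|\nu_t|\to 0$ is not an additional obstruction, since $|\nu_t|$ appears as a weight on the leading term and the interpolation construction is only needed on the transversal pieces of $\Sigma$.
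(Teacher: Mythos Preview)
Your overall architecture is right: reduce to one transversal affine piece, modify $u$ only in a $\lambda$-strip about it, and estimate the temporal Dirichlet term, the double-well term, and the nonlocal defect separately. However, the construction you describe and your account of where $\delta$ enters do not match the mechanism that actually produces \eqref{e.defectRHS}, and two of your claimed estimates are not correct as stated.

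First, the paper does \emph{not} do slice selection plus linear interpolation. The regularization is a pure temporal average at scale $\lambda$,
\[
\hat{u}(\tau,z)=\frac{1}{\lambda}\int_{-\lambda/2}^{\lambda/2} u(\tau+\sigma,z)\,d\sigma,\qquad \tilde u=\zeta\hat u+(1-\zeta)u,
\]
with the cutoff $\zeta$ taken in the \emph{temporal} distance $\mathfrak t((\tau,z),\Sigma)$ rather than in your normal coordinate $\sigma=(\tau,z)\cdot\nu$. In particular, the construction of $\tilde u$ is independent of $\delta$: the parameter $\delta$ appears only afterwards, through the elementary Young inequality $|\lambda\partial_\tau u|\le \delta + C\delta^{-1}\Psi_0(\lambda\partial_\tau u)$, which converts integrals of $|\partial_\tau u|$ over the $\lambda$-strip into the two terms on the right of \eqref{e.defectRHS}. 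Your explanation that $\delta^{-1}$ comes from a mean-value slice selection over the annulus $\{\delta\lambda\le|\sigma|\le\lambda\}$ does not produce the right factor (that annulus has width $(1-\delta)\lambda$, so mean value gives $(1-\delta)^{-1}$, not $\delta^{-1}$).

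Second, two of your pointwise bounds are wrong. The inequality ``$\Psi(S,V)\lesssim|V|$'' is false: \eqref{estimate} gives $\Psi(S,V)\sim|V|\min\{|V|,\log(2+|V|)\}$, which is superlinear. What the paper actually uses is the quadratic upper bound $\Psi_0(P)\le C|P|^2$ for bounded $P$, which applies because $|\lambda\partial_\tau\hat u|\le 2$ by construction. Relatedly, if you interpolate across a core of normal width $\delta\lambda$ as you describe, then for fixed $z$ the time-span of the core is $\delta\lambda/|\nu_t|$, so $\lambda\partial_\tau\tilde u$ is of order $|u_+-u_-||\nu_t|/\delta$, not $|u_+-u_-||\nu_t|$; feeding this into either the quadratic or the $|V|\log|V|$ bound produces an unwanted negative power of $\delta$ on the leading term.

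Your approach can be repaired: interpolate (or average) over the full $\lambda$-strip so that $|\lambda\partial_\tau\tilde u|\lesssim |u_+-u_-|$ there, and introduce $\delta$ only at the end via Young's inequality when you need to control $\int_{\{0<\mathfrak t\le\lambda\}}|\partial_\tau u|$ arising from the gluing. But as written, the placement of $\delta$ as a geometric core/annulus ratio and the linear bound on $\Psi$ are the steps that would fail.
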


\begin{proof} 

We will first assume that $\Sigma$ is a subset of a single affine $d$-plane, at the end of the proof we will explain how to extend to the general case of a finite union of affine pieces.  

\medskip

In the single plane case there is a single normal direction $\nu$ constant on $\Sigma$. Note that if $\nu_t = 0$ no argument is needed, simply take $\tilde{u} = u$ so we can assume $\nu_t \neq 0$. We may further assume that $0 \in \Sigma$. Define 
$$
A_\pm := A \cap \{\pm \nu \cdot (\tau,z) >0\} \hbox{ and } r_*(\tau,z):= {\rm argmin}_{\sigma}\{|\tau-\sigma|:(\sigma,z)\in \Sigma\}. 
$$
If $z$ is not in the projection of $\Sigma$ onto $\T^d$ then we define $r_*(\tau,z):=\mp\infty$ in $A_\pm$. So we have $\pm(\tau-r_*(\tau,z))\in [0,+\infty]$ in $A_\pm$ respectively.

Also, in the typical style of a priori estimates, we can assume that $u$ is $C^1$ individually in $\overline{A_\pm}$ (but not their union) so that the computations below are justified, but then the estimate obtained will not depend on the $C^1$ norm so we can remove that assumption in the end.

\medskip

Now we proceed in several steps. 

{\bf Step 1:} First we introduce $\tilde{u}$ which essentially averages $u$ in a temporal neighborhood of $\Sigma$ of size $O(\lambda)$.  This is the exact scale at which we must perform the regularization, smaller scales would have too large temporal ``Dirichlet" energy and larger scales would magnify the energy of transitions from $-\mathfrak{s}$ to $\mathfrak{s}$ too much.  The energy error of the regularization will be related to the trace difference which needs to be traversed over the $\lambda$ scale.

 Let $\zeta$ be a cut-off function that satisfies
\[ \zeta(\tau,z) = \begin{cases}1 & |\tau-r_*(\tau,z)| \leq \lambda /4\\
0 & |\tau-r_*(\tau,z)| \geq \lambda/2\end{cases} \ \hbox{ and } \ |\partial_\tau\zeta| \leq C\lambda^{-1}.\]  
 Define
\[ \tilde{u}:= \zeta \hat{u} + (1-\zeta)u \ \hbox{ with } \ \hat{u}(\tau,z) = \frac{1}{\lambda}\int_{-\lambda/2}^{\lambda/2} u(\tau+\sigma,z) \ d\sigma.\]

We make a few computations relating $\hat{u} - u$ and $\partial_\tau\hat{u}$ to the traces on $\Sigma$.  When $\zeta(\tau,z)>0$ then $r_*(\tau,z) \in [\tau-\lambda/2,\tau+\lambda/2]$.  We use this to write, on $\{\zeta>0\}$,
\[ u(\tau,z) = u^\pm(r_*(\tau,z),z) + \int_{r_*(\tau,z)}^\tau \partial_\tau u(\sigma,z) \ d\sigma \ \hbox{ if } \ (\tau,z) \in A_\pm.\]
Then we can use this decomposition in $\hat{u}$ as well.  By definition we have
\begin{equation}\label{11}
\hat{u}(\tau,z) = \mu(\tau,z) u^+(r_*(\tau,z),z)+(1-\mu(\tau,z)) u^-(r_*(\tau,z),z)+ \lambda K(\tau,z),
\end{equation}
where $\mu(\tau,z) \in (0,1)$ is defined as the fraction of $\sigma\in [-\lambda/2,\lambda/2]$ so that $(\tau+\sigma ,z) \in A_+$ and
\[ K(\tau,z) = \frac{1}{\lambda^2} \int_{-\lambda/2}^{r_*(\tau,z)} \int_{\tau+\sigma }^{r_*(\tau,z)} \partial_\tau u(k,z) \ dk\, d\sigma+ \frac{1}{\lambda^2}\int_{r_*(\tau,z)}^{\lambda/2}\int_{r_*(\tau,z)}^{\tau+\sigma}\partial_\tau u(k,z) dk\, d\sigma.\]

The appearance of this type of error term motivates the following definition on $A_\pm \cap \{\zeta>0\}$, 
\[ \textup{avg}_\pm(|\partial_\tau u|)(\tau,z) := \frac{1}{\lambda}\int_{[r_*(\tau,z)\pm\lambda,r_*(\tau,z)]} |\partial_\tau u(\sigma, z)| \, d\sigma+\frac{2}{\lambda^2} \int_{[r_*(\tau,z)\pm\lambda,r_*(\tau,z)]}\int_{[\sigma,r_*(\tau,z)]} |\partial_\tau u(k,z)| \ dk\, d\sigma \]
Note that $\textup{avg}_\pm(|\partial_\tau u|)$ are, respectively, integral averages of $\partial_\tau u$ purely on $A_\pm$ respectively, they do not see the discontinuity across $\Sigma$. Recall that we have reduced, for convenience, to the case where $\Sigma$ is a graph over the $t$ direction and $\pm(\tau-r_*(\tau,z))>0$ on $A_{\pm}$.

One particular consequence of these computations is that on $\{\zeta>0\}$,
\begin{equation}\label{e.hatuudiff}
    |\hat{u}(\tau,z) - u(\tau,z)| \leq |u^+(r_*(\tau,z),z) - u^-(r_*(\tau,z),\tau)| + \lambda \sum_{\pm}\textup{avg}_\pm(|\partial_\tau u|)(\tau,z).
\end{equation} 
We can also make a similar decomposition for $\partial_\tau \hat{u}$. Note
\[ \partial_\tau \hat{u} = \frac{1}{\lambda}[u(\tau+\tfrac{\lambda}{2},z) - u(\tau-\tfrac{\lambda}{2},z)].\]
When $\zeta(\tau,z)>0$ then $r_*(\tau,z) \in [\tau-\lambda/2,t+\lambda/2]$ so we can write
\begin{align*}
 \frac{1}{\lambda}|u(\tau+\tfrac{\lambda}{2},z) - u(\tau-\tfrac{\lambda}{2},z)| &= \frac{1}{\lambda}|u^+(r_*(\tau,z),z) - u^-(r_*(\tau,z), z)|\\
 &\quad \quad \quad +\frac{1}{\lambda}\left|\left\{\int_{r_*(\tau,z)-\lambda/2}^{r_*(\tau,z)} + \int_{r_*(\tau,z)}^{r_*(\tau,z)+\lambda/2}\right\} \partial_\tau u(\sigma, z) \ d\sigma\right|\\
 &\leq \frac{1}{\lambda}|u^+(r_*(\tau,z),z) - u^-(r_*(\tau,z),z)| +  \sum_{\pm} \textup{avg}_\pm(|\partial_\tau u|)(\tau,z).
 \end{align*}
 
 {\bf Step 2.}  Next we make a general comment on integrals of the type
 \[ \frac{1}{\lambda}\int_{\{\zeta>0\}} h(r_*(\tau,z),z) \ d\tau\, dz\]
 for a function $h \in L^1(\Sigma,d\mathcal{H}^d|_{\Sigma})$.  By co-area formula
 \[ \frac{1}{\lambda}\int_{\{\zeta>0\}} h(r_*(\tau,z),z) \ d\tau\, dz = \frac{1}{\lambda}\int_{-\lambda/2}^{\lambda/2}\int_{\Sigma + (\tau,0)}h(r_*(\tau,z),z)|Ds_*|^{-1} d\mathcal{H}^{d} d\tau.\]
 Note that, since $\nu_t \neq 0$,  $|\partial_\tau r_*(\tau,z)| = 1$  and $|D_xr_*(\tau,z)| = \frac{|\nu_x|}{|\nu_t|} (r_*(\tau,z),z)$, so $|D_{t,x}r_*(\tau,z)| = |\nu_t|(r_*(\tau,z),z)^{-1}$.  Thus we obtain the change of variables formula
 \begin{equation}\label{e.covnut} \frac{1}{\lambda}\int_{\{\zeta>0\}} h(r_*(\tau,z),z) \, d\tau\, dz = \int_{\Sigma} h |\nu_t| d\mathcal{H}^d.
 \end{equation}
 
{\bf Step 3.} Using above formula, here we will see that error terms of type $\textup{avg}_\pm(|\partial_\tau u|)$ can be controlled by the energy in a $\lambda$ - temporal neighborhood of $\Sigma$.

The following formulae will be applied below with $f = |\partial_\tau u|$ or other related functions in later steps.  We use the change of variables formula, applied to $h(\tau,x):= f(\tau+\sigma,z)$, twice to compute
\begin{align*}\frac{1}{\lambda}\int_{\{\zeta>0\}}  \lambda \frac{1}{\lambda}\int_{[\pm\lambda,0]} |f(r_*(\tau,z)+\sigma,z)| \ d\sigma\, d\tau\, dz  &= \int_{\Sigma}\int_{[\pm\lambda,0]} |f(\tau+\sigma,z)| \, d\sigma |\nu_t| d\mathcal{H}^d(\tau,z) \\
&=\int_{\{0 < \pm (\tau - r_*(\tau,z)) < \lambda\}}|f(\tau,z)|d\tau\, dz.\end{align*}
Similarly,
\begin{align*}
&\frac{1}{\lambda}\int_{\{\zeta>0\}}  \lambda \frac{2}{\lambda^2} \int_{[\pm\lambda,0]}\int_{[\sigma,0]} |f(r_*(\tau,z)+k,z)| \ dk\, d\sigma\, d\tau\, dz  \\
\quad \quad &= \int_{\Sigma}\frac{2}{\lambda}\int_{[\pm\lambda,0]}\int_{[\sigma,0]} |f(\tau+k,z)| \, dk\, d\sigma |\nu_t| d\mathcal{H}^d(\tau,z) \\
&=\frac{2}{\lambda}\int_{[\pm\lambda,0]}\int_{[\sigma,0]}\int_{\Sigma} |f(\tau+k,z)| |\nu_t(\tau,z)| d\mathcal{H}^d(\tau,z)dk\, d\sigma\\
&=\frac{2}{\lambda}\int_{[\pm\lambda,0]}\int_{[\sigma,0]}\int_{\Sigma} |f(\tau+k,z)| |\nu_t(\tau,z)| d\mathcal{H}^d(\tau,z)dk\, d\sigma\\
&=\frac{2}{\lambda}\int_{[\pm\lambda,0]}\int_{\{0 < \pm (\tau - r_*(\tau,z)) < \sigma\}}|f(\tau,z)|d\tau\, dz\, d\sigma\\
&\leq 2\int_{\{0 < \pm (\tau - r_*(\tau,z)) < \lambda\}}|f(\tau,z)|d\tau\, dz.
\end{align*}
Combining the above we find
\begin{equation}\label{111}
    \frac{1}{\lambda}\int_{\{\zeta>0\}}  \lambda \textup{avg}_\pm(|f|) d\tau\, dz \leq 3\int_{\{0 < \pm (\tau - r_*(\tau,z)) < \lambda\}}|f(\tau,z)|d\tau\, dz.
\end{equation}
Now, since we will often take $f = |\partial_\tau u|$ or similar below, we need to explain how to estimate the right hand side in \eqref{111} with that choice of $f$.  For the below we use the version of Young's inequality $|V| \leq \delta + C\delta^{-1}\Psi_0(V)$ for arbitrary $1>\delta>0$:
 \begin{align}
     \int_{\{0 < \pm (\tau - r_*(\tau,z)) < \lambda\}}|\partial_\tau u(\tau,z)|d\tau\, dz &=\lambda^{-1}\int_{\{0 < \pm (\tau - r_*(\tau,z)) < \lambda\}}\lambda|\partial_\tau u(\tau,z)|d\tau\, dz\notag\\
     &\leq \int_{\{0 < \pm (\tau - r_*(\tau,z)) < \lambda\}}\delta\lambda^{-1}+C\delta^{-1}\lambda^{-1}\Psi_0(\lambda\, \partial_\tau u))d\tau\, dz \notag\\
 &\leq  \int_{\Sigma} \delta |\nu_t| d\mathcal{H}^{d} + C\delta^{-1}\frac{1}{\lambda}\int_{A_\pm\cap\{\mathfrak{t}((\tau,z),\Sigma) \leq \lambda\}}\Psi_0(\lambda\, \partial_\tau u) \ d\tau\, dz \label{e.dtuL1est}
 \end{align} 
 where we used \eref{covnut} at the last step with $h \equiv 1$.

 \medskip
 
{\bf Step 4.}  
In this step, we work to estimate the local terms in the energy, with the first focus on the ``Dirichlet" type term. We aim to estimate from above the difference 
\[ \int_{A_+ \cup A_-} \lambda^{-1}\Psi(\tilde{u},\lambda\, \partial_\tau \tilde{u})d\tau\, dz - \sum_{\pm}\int_{A_\pm}\lambda^{-1}\Psi(u,\lambda\, \partial_\tau u)d\tau\, dz. \]
We define
 \[ v := \zeta\, \partial_\tau\hat{u} + (1-\zeta)\partial_\tau u \ \hbox{ so that } \ \partial_\tau \tilde{u} = v + \partial_\tau \zeta (\hat{u} - u).\]
As part of estimating the previous energy difference we will estimate
\[\Psi(\tilde{u},\lambda \partial_\tau \tilde{u}) - \Psi(\tilde{u},\lambda v).\]  
Using abstract variables $a = \lambda v$ and $h = \lambda\, \partial_\tau \zeta (\hat{u} - u)$ and dropping the $\tilde{u}$ dependence because it is the same in each term
\begin{align*}
    \Psi(a+h) - \Psi(a) & = \int_a^{a+h} \Psi'(k) dk \\
    &\leq (|\Psi'(a)| + |\Psi'(a+h)|)|h| \\
    &\leq 2\Psi'(|a| + |h|)|h|\\
    &\leq 2\Psi'(|a|)|h| + 2\Psi'(|h|)|h|\\
    &\leq \Psi'(|a|)^2{\bf 1}_{\{|h|>0\}}+|h|^2 + C \Psi(|h|) \\
    &\leq C\Psi(|a|){\bf 1}_{\{|h|>0\}} + |h|^2 + C\Psi(|h|).
\end{align*}
Where we used in order, from  Proposition \ref{prop:W_decomposition}, that $\Psi'$ is monotone increasing, odd symmetric, subadditive on $[0,\infty)$, and for the remaining inequalities we used the bounds \eqref{estimate} and \eqref{estimate2}. 

Applying this we arrive at 
\[
  \int_{A_+ \cup A_-} \frac{1}{\lambda}\Psi(\tilde{u},\lambda\, \partial_\tau \tilde{u}) \ d\tau\, dz \leq \frac{C}{\lambda}\int_{A_+ \cup A_-} \big[\Psi(\tilde{u},\lambda v) +\Psi(\tilde{u},\lambda |v|){\bf 1}_{\{|\partial_\tau \zeta|>0\}} + |\lambda\, \partial_\tau\zeta|^2|\hat{u} - u|^2 + 2\Psi(\tilde{u},\lambda|\partial_\tau \zeta||\hat{u}-u|) \big]d\tau\, dz.\]
  For the first term on the right we use non-negativity of $\Psi$
\[ \int_{A_+ \cup A_-} \frac{1}{\lambda}\Psi(\tilde{u},\lambda\, v) \ d\tau\, dz  \leq \int_{A_+ \cup A_-} \frac{1}{\lambda}\Psi(u,\lambda\, \partial_\tau u)d\tau\, dz+\int_{\{ \zeta >0\}}\frac{1}{\lambda}  \Psi(\tilde{u},\lambda v) d\tau\, dz.\]
  Since we can bound $\Psi(\tilde{u},\cdot) \leq C\Psi_0(\cdot)$ it remains for us to bound the error terms (using even symmetry of $\Psi$)
  \[ (I):= \int_{\{ \zeta >0\}}\frac{1}{\lambda}  \Psi_0(\lambda v) d\tau\, dz, \ (II) := \frac{1}{\lambda}\int_{\{\zeta>0\}} |\lambda\, \partial_\tau\zeta|^2|\hat{u} - u|^2 \ d\tau\, dz, \ \hbox{ and } \ (III) := \frac{1}{\lambda}\int_{\{\zeta>0\}}\Psi_0(\lambda|\partial_\tau \zeta||\hat{u}-u|) d\tau\, dz. \]
 Note that $|\lambda\, \partial_\tau\zeta| \leq C$ and $|\Psi_0(P)| \leq C|P|^2$ so
\[ (II),(III) \leq C\frac{1}{\lambda}\int_{\{\zeta>0\}} |\hat{u}-u|^2 \ d\tau\, dz \leq C\frac{1}{\lambda}\int_{\{\zeta>0\}} |\hat{u}-u| \ d\tau\, dz.\]
At the last step we used $|\hat{u} - u| \leq 2$ to bound the $L^2$ norm by $L^1$. Then, applying \eqref{e.hatuudiff} and \eqref{111} this becomes
\begin{align*}
    \frac{1}{\lambda}\int_{\{\zeta>0\}} |\hat{u}-u| \ d\tau\, dz &\leq \frac{1}{\lambda}\int_{\{\zeta>0\}}|u^+(r_*(\tau\, z),z) - u^-(r_*(\tau, z),z)| + \lambda \sum_{\pm}\textup{avg}_\pm(|\partial_\tau u|)(\tau,z) d\tau\, dz\\
    & = \int_{\Sigma} |u^+ - u^-| d\mathcal{H}^d + 3\int_{\{0 < \pm (\tau - r_*(\tau,z)) < \lambda\}}|\partial_\tau u(\tau,z)|d\tau\, dz
\end{align*} 
The second term can be estimated by the discussion in Step 3, in particular \eref{dtuL1est}.

It remains to discuss the error term $(I)$, relying on the convexity of $\Psi_0$. Observe first that
\[ (I) = \int_{\{ \zeta >0\}}\frac{1}{\lambda}  \Psi_0(\lambda v) d\tau\, dz \leq \frac{C}{\lambda}\int_{\{\zeta>0\}} \zeta\Psi_0(\lambda\, \partial_\tau\hat{u}) + (1-\zeta)\Psi_0(\lambda\, \partial_\tau u) \ d\tau\, dz.\]
The second term is already one of the claimed error terms in the statement.  The first term is bounded by using formula \eqref{e.hatuudiff} to relate with the traces on $\Sigma$:
\begin{align*}
\int_{A_+ \cup A_-} \frac{1}{\lambda}\zeta\Psi_0(\lambda\, \partial_\tau \hat{u}) \ d\tau\, dz &\leq C\int_{\{\zeta >0\}} \frac{1}{\lambda}\Psi_0(\lambda\, \partial_\tau \hat{u}) \ d\tau\, dz \\
& \leq C\frac{1}{\lambda}\int_{\{\zeta >0\}} \Psi_0(|u^+(r_*(\tau,z),z) - u^-(r_*(\tau,z),z)|)+C\Psi_0(\lambda\sum_{\pm}\textup{avg}_\pm(|\partial_\tau u|)) \ d\tau\, dz.
\end{align*}
The last line using that $\Psi_0(A+B)\leq C(\Psi_0(A)+\Psi_0(B))$.
It follows by again convexity of $\Psi_0$ and Jensen's inequality
\[\Psi_0(\lambda\textup{avg}_\pm(|\partial_\tau u|)) \leq \textup{avg}_\pm(\Psi_0(\lambda|\partial_\tau u|)).\]
Now \eqref{111} yields
\[\frac{1}{\lambda}\int_{\{\zeta >0\}}\Psi_0(\lambda\sum_{\pm}\textup{avg}_\pm(|\partial_\tau u|)) d\tau\, dz \leq C\sum_{\pm}\int_{\{0 < \pm (\tau - r_*(\tau,z)) < \lambda\}}\lambda^{-1}\Psi_0(\lambda |\partial_\tau u|)d\tau\, dz.\]
This type of term appears on the right hand side of the claimed estimate so we are done estimating term $(I)$.

\medskip

Next we deal with the double-well term
\[   \int_{A_+ \cup A_-} \frac{1}{\lambda}\mathcal{W}_\beta(\tilde{u}) \ d\tau\, dz -\int_{A_+\cup A_-} \frac{1}{\lambda}\mathcal{W}_\beta(u)\ d\tau\, dz=   \int_{\{\zeta >0\}} \frac{1}{\lambda}(\mathcal{W}_\beta(\tilde{u}) - W_\beta(u)) \ d\tau\, dz.\]
So we need to deal with this term on the right
\[ W_\beta(\tilde{u}) = W_\beta ( u + \zeta (\hat{u}-u)).\]
Applying \eref{hatuudiff} and using that $W_\beta$ is Lipschitz on $[-\mathfrak{s},\mathfrak{s}]$
\[ |W_\beta(\tilde{u}) -  W_\beta ( u)| \leq C|u_+(r_*(\tau,z),z) - u_-(r_*(\tau,z),z)| + C\lambda \sum_{\pm}\textup{avg}_\pm(|\partial_\tau u|) \ \hbox{ on } \{\zeta >0\} .\]
From there the estimate is the same as in Step 3.

{\bf Step 5.} 
We still need to bound $|\mathcal{N}^\lambda(\tilde{u},B,B)-\mathcal{N}^\lambda(u,B,B)|$. For that we write $\tilde{u} = u + \zeta (\hat{u} - u)$ and bound
\[ |\mathcal{N}^\lambda(\tilde{u},B,B)-\mathcal{N}^\lambda(u,B,B)| \leq \frac{C}{\lambda}\int_{B} \zeta(\tau,z)|\hat{u}(\tau,z) - u(\tau,z)| \ d\tau\, dz\]
This type of error term was already bounded in step 4 above.
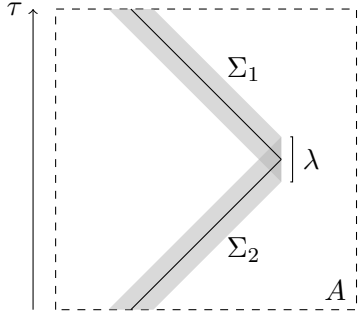
\begin{figure}
    \centering
   \begin{tikzpicture}[scale = 1]
\draw[dashed] (-2,-2) -- (-2,2) -- (2,2) -- (2,-2) node[anchor = south east] {$A$} -- (-2,-2);
\def\l{.3}
\path[fill=gray!60,semitransparent] (-1-\l,2) -- (-1+\l,2)-- (1,\l) -- (1,-\l) -- (-1-\l,2);
\path[fill=gray!60,semitransparent] (-1-\l,-2) -- (-1+\l,-2)-- (1,-\l) -- (1,\l) -- (-1-\l,-2);
\node (A) at (.5,1.2) {$\Sigma_1$};
\node (B) at (.5,-1.2) {$\Sigma_2$};
\draw (-1,2) -- (1,0) ;
\draw (-1,-2) --(1,0);
\draw[->] (-2-\l,-2) -- (-2-\l,2) node[anchor = east] {$\tau$};
\draw[thin] (1+.4*\l,\l)--(1+.5*\l,\l)--(1+.5*\l,0) node[anchor = west]{$\lambda$} -- (1+.5*\l,-\l)--(1+.4*\l,-\l);
\end{tikzpicture}
    \caption{Open region $A$ intersected by defect surface $\Sigma$ made up of two affine pieces $\Sigma_j$, $\lambda$-time neighborhoods and their overlap are displayed.}
    \label{f.defect-estimate}
\end{figure}

{\bf Step 6.} Finally we consider the case when $\Sigma$ is a finite union of pieces $\Sigma_1,\dots,\Sigma_J$ which are each contained in affine planes $P_j$, see \fref{defect-estimate}.  We can assume that the planes $P_j$ are all distinct, otherwise, the corresponding sets could be regrouped with a smaller $J$.  For $\lambda>0$ sufficiently small the $\lambda$-neighborhoods of any two parallel planes of $\{P_j\}$ will be disjoint.  Any two non-parallel $P_j$ meet, at most, on a set of Hausdorff dimension $d-1$ and we can bound, using the compactness of the region $[0,T]\times\T^d$, 
\[ \mathcal{H}^d(\Sigma_j^\lambda \cap \Sigma_k) \leq C \lambda \ \hbox{ for any } \ j \neq k.\]
With this in mind, we proceed inductively and assume we have constructed $\tilde{u}_j$ satisfying the conclusion of the theorem on $(A \cup \Sigma_1 \cup \cdots \cup \Sigma_j)^o$.  Then we apply the single plane case to define $\tilde{u}_{j+1}$ by the mollification of $\tilde{u}_j$.  Notice that the traces $(\tilde{u}_j)_\pm$ on $\Sigma_{j+1}$ only differ from the traces of $u$ on the intersections of $\Sigma_k^\lambda$ for $1 \leq k \leq j$ with $\Sigma_{j+1}$ and by the previous argument these intersections have $\mathcal{H}^d$ measure bounded by $C\lambda$ so
\[\int_{{\Sigma_{j+1}}}  |(\tilde{u}_j)_+ - (\tilde{u}_j)_-|d \mathcal{H}^d \leq \int_{{\Sigma_{j+1}}} ( |u_+ - u_-|+ C\lambda)d \mathcal{H}^d. \]
Furthermore the energy
\[ \frac{1}{\lambda}\int_{A\cap\{0<\mathfrak{t}((\tau,z),\Sigma_{j+1}) \leq \lambda\}}\Psi_0(\lambda\, \partial_\tau \tilde{u}_j) \ d\tau\, dz  \leq \mathcal{G}^{\lambda}_{loc}(\tilde{u}_j;(A \cup \Sigma\cup\cdots\cup\Sigma_j)^o) -\mathcal{G}^\lambda_{loc}(u;A) \]
which we have already assumed, in the inductive hypothesis, to be bounded by the right-hand side of \eref{defectRHS}.
\end{proof}

We now state several useful consequences of \pref{patchingestimate}, restated in terms that are more directly useful for section 4.4.

The primary use of \pref{patchingestimate} is to patch together two solutions that agree in trace along the dividing boundary.  We state as a corollary that this can be done without introducing extra cost in the limit as $\lambda\rightarrow 0$.  We use the notation $A_1\sqcup A_2$ to denote the interior of the closure of $A_1\cup A_2$.
\begin{corollary}\label{cor:patching}
Suppose that a surface $\Sigma$, which is a finite union of pieces of $d$-dimensional affine planes, strongly divides  a pair of sets $A_1$ and $A_2$, and $u_1^\lambda: A_1 \rightarrow [-\mathfrak{s},\mathfrak{s}]$ and $u_2^\lambda: A_2 \rightarrow [-\mathfrak{s},\mathfrak{s}]$ with $\sup_\lambda \mathcal{G}(u_1^\lambda;A_1) +\sup_\lambda \mathcal{G}(u_2^\lambda;A_2)<\infty$.  Suppose further that the $\lambda$-traces on $\Sigma$ of $u_1^\lambda$ converge to $v_1:\Sigma\rightarrow [-\mathfrak{s},\mathfrak{s}]$ and the $\lambda$-traces on $\Sigma$ of $u_2^\lambda$ converge to $v_2:\Sigma\rightarrow [-\mathfrak{s},\mathfrak{s}]$.  Then there exists $u^\lambda:A_1\sqcup A_2\rightarrow [-\mathfrak{s},\mathfrak{s}]$ that  satisfies
$$
\limsup_{\lambda\rightarrow^+0} \Big(\|u^\lambda - u_1^\lambda\|_{L^1(A_1)}+\|u^\lambda - u_2^\lambda\|_{L^1(A_2)}\Big) = 0
$$
and
$$
    \limsup_{\lambda\rightarrow^+0}\Big(\mathcal{G}^\lambda(u^\lambda; A_1\sqcup A_2) -\mathcal{G}^\lambda(u_1^\lambda; A_1) -\mathcal{G}^\lambda(u_2^\lambda; A_2) \Big)\leq C\int_\Sigma |v_1-v_2|d\mathcal{H}^d.
$$
Furthermore, the construction of $u$ depends only locally on the values of $u_1$ and $u_2$, and $u=u_1$ or $u=u_2$ a distance greater than $\lambda$ from the boundary of $A_1$ or $A_2$.
\end{corollary}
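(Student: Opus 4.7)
The strategy is to concatenate $u_1^\lambda$ and $u_2^\lambda$ into a single (possibly discontinuous) field on $A_1 \cup A_2$, then smooth it across $\Sigma$ using the defect estimate of Proposition \ref{p.patchingestimate}, and separately handle the nonlocal cross-interaction between $A_1$ and $A_2$ via Lemma \ref{lem:localization}. Define $\bar{u}^\lambda := u_i^\lambda$ on $A_i$, which is well defined a.e. since $A_1 \cap A_2 \subset \Sigma$ has Lebesgue measure zero. A direct splitting into local and nonlocal parts yields
\begin{equation*}
\mathcal{G}^\lambda(\bar{u}^\lambda; A_1 \cup A_2) = \mathcal{G}^\lambda(u_1^\lambda; A_1) + \mathcal{G}^\lambda(u_2^\lambda; A_2) + 2\mathcal{N}^\lambda(\bar{u}^\lambda; A_1, A_2).
\end{equation*}
I then apply Proposition \ref{p.patchingestimate} to $\bar{u}^\lambda$ with the dividing surface $\Sigma$ to produce $u^\lambda$ on $(A_1 \cup A_2 \cup \Sigma)^o = A_1 \sqcup A_2$. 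By construction $u^\lambda$ agrees with $\bar{u}^\lambda$ outside a $\lambda$-temporal neighborhood of $\Sigma$, a set of Lebesgue measure $O(\lambda)$, and since both fields are uniformly bounded this gives $\|u^\lambda - u_i^\lambda\|_{L^1(A_i)} = O(\lambda) \to 0$.

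Next, taking $B := A_1 \sqcup A_2$, the defect estimate gives, for any $\delta > 0$,
\begin{equation*}
\mathcal{G}^\lambda_{loc}(u^\lambda; B) - \mathcal{G}^\lambda_{loc}(\bar{u}^\lambda; A_1 \cup A_2) + \big| \mathcal{N}^\lambda(u^\lambda; B, B) - \mathcal{N}^\lambda(\bar{u}^\lambda; B, B) \big| \leq C \int_\Sigma \big( |u_+ - u_-| + \lambda + \delta \big) |\nu_t|\, d\mathcal{H}^d + C \delta^{-1} \eta_\lambda,
\end{equation*}
where $u_\pm$ are the traces of $\bar{u}^\lambda$ on $\Sigma$ (coinciding with the respective traces of $u_i^\lambda$) and $\eta_\lambda := \lambda^{-1} \int_{\{0 < \mathfrak{t}((\tau,z),\Sigma) \leq \lambda\}} \Psi_0(\lambda\, \partial_\tau \bar{u}^\lambda)\, d\tau\, dz$. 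Because the $\lambda$-traces of $u_i^\lambda$ converge to $v_i$, Definition \ref{d.lambdatrace} yields both $\eta_\lambda \to 0$ and $\int_\Sigma |u_+ - u_-||\nu_t|\, d\mathcal{H}^d \to \int_\Sigma |v_1 - v_2||\nu_t|\, d\mathcal{H}^d$. Choosing $\delta_\lambda \to 0$ with $\delta_\lambda^{-1} \eta_\lambda \to 0$ (for instance $\delta_\lambda = \sqrt{\eta_\lambda}$) makes the remaining error terms vanish.

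For the residual nonlocal cross-term, which is untouched by the local modification made in Proposition \ref{p.patchingestimate}, Lemma \ref{lem:localization} applied to $\bar{u}^\lambda$ gives
\begin{equation*}
\limsup_{\lambda \to 0^+} \mathcal{N}^\lambda(\bar{u}^\lambda; A_1, A_2) \leq \tfrac{\hat{J}}{2} \int_\Sigma |v_1 - v_2||\nu_x|\, d\mathcal{H}^d.
\end{equation*}
Writing $\mathcal{G}^\lambda(u^\lambda; B) = \mathcal{G}^\lambda_{loc}(u^\lambda; B) + \mathcal{N}^\lambda(u^\lambda; B, B)$ and using the additivity $\mathcal{N}^\lambda(\bar{u}^\lambda; B, B) = \mathcal{N}^\lambda(u_1^\lambda; A_1, A_1) + \mathcal{N}^\lambda(u_2^\lambda; A_2, A_2) + 2\mathcal{N}^\lambda(\bar{u}^\lambda; A_1, A_2)$, chaining the three estimates absorbs the normal components via $|\nu_t|, |\nu_x| \leq 1$ and yields
\begin{equation*}
\limsup_{\lambda \to 0^+} \big[ \mathcal{G}^\lambda(u^\lambda; A_1 \sqcup A_2) - \mathcal{G}^\lambda(u_1^\lambda; A_1) - \mathcal{G}^\lambda(u_2^\lambda; A_2) \big] \leq C \int_\Sigma |v_1 - v_2|\, d\mathcal{H}^d.
\end{equation*}

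The main obstacle is the compatibility between the local and nonlocal pieces. The defect estimate only regularizes the temporal direction where $|\nu_t| > 0$ and controls only the self-interaction change of $\mathcal{N}^\lambda$ within $B$, so the interaction across the possibly vertical portions of $\Sigma$ has to be handled separately by Lemma \ref{lem:localization}. Introducing the intermediate concatenation $\bar{u}^\lambda$ cleanly splits these two tasks, and the parameter $\delta$ in the defect estimate must be tuned against the temporal-energy error $\eta_\lambda$ — but the third term in Definition \ref{d.lambdatrace} is precisely what forces $\eta_\lambda \to 0$ and makes the tuning possible.
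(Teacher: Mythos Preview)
Your proof is correct and follows essentially the same approach as the paper: concatenate, apply the defect estimate of Proposition~\ref{p.patchingestimate} to $\bar{u}^\lambda$, and control the residual nonlocal cross-term via Lemma~\ref{lem:localization}. The only organizational difference is that the paper applies the defect estimate three times (with $B=A_1\cup A_2$, $A_1$, $A_2$) to transfer $\mathcal{N}^\lambda(\tilde{u};A_1,A_2)$ back to $\mathcal{N}^\lambda(\bar{u}^\lambda;A_1,A_2)$, whereas you apply it once with $B=A_1\sqcup A_2$ and then use the additive splitting $\mathcal{N}^\lambda(\bar{u}^\lambda;B,B)=\sum_i\mathcal{N}^\lambda(u_i^\lambda;A_i,A_i)+2\mathcal{N}^\lambda(\bar{u}^\lambda;A_1,A_2)$; both routes are equivalent.
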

\begin{proof}
    The proof is a direct application of \pref{patchingestimate} with the function 
    \[u =  u^\lambda_1 {\bf 1}_{A_1} + u^\lambda_2{\bf 1}_{A_2}\]
    and $u^\lambda = \tilde{u}$.  Definition \ref{d.lambdatrace} ensures that the right hand side of \eref{defectRHS} are controlled in the limit as $\lambda\rightarrow^+ 0$ by $C\int_\Sigma |v_1-v_2|d\mathcal{H}^d$ and also
    \begin{align*} \limsup_{\lambda\to0}\mathcal{N}^\lambda(\tilde{u},A_1,A_2) &= \frac{1}{2}\limsup_{\lambda \to 0}\left[\mathcal{N}^\lambda(\tilde{u},A_1\cup A_2,A_1\cup A_2) - \mathcal{N}^\lambda(\tilde{u},A_1,A_1) - \mathcal{N}^\lambda(\tilde{u},A_2,A_2)\right]\\
    & \leq \limsup_{\lambda \to 0} \mathcal{N}^\lambda(u,A_1,A_2) + C\int_\Sigma |v_1-v_2|d\mathcal{H}^d
    \end{align*}
    applying \eref{defectRHS} with $B$ respectively to be $A_1\cup A_2$, $A_1$, $A_2$. Recall from \dref{lambdatrace} and the assumption of $\lambda$-trace convergence on $\Sigma$ that the last term on the right of \eref{defectRHS} involving $\lambda^{-1}\Psi_0(\lambda \cdot)$ also converges to zero as $\lambda \to 0$.
    
    Furthermore, Lemma \ref{lem:localization} implies that the nonlocality defect $\mathcal{N}^\lambda(u,A_1,A_2)$ is bounded by $C\int_\Sigma |v_1-v_2|d\mathcal{H}^d$. The $L^1$ equivalence follows from the fact that $u_1=u$ and $u_2=u$ a distance greater than $\lambda$ from $\Sigma$.
\end{proof}

We similarly will use an adjustment of functions defined on a square to periodic functions in $\mathcal{X}_R$. We fix a space-time unit-normal vector $\nu$ and $\square\in \blacksquare_\nu$, and consider the step function
    \begin{equation}\label{step}
        v_\nu(t,x) := \begin{cases} +\mathfrak{s} & (t,x)\cdot \nu\geq 0,\\
        -\mathfrak{s} & (t,x)\cdot \nu<0.\end{cases}
    \end{equation}
Recall that we define spaces for $\square$-periodic $C^1$ functions in section 3 (\eqref{space_X}, \eqref{space_init} and \eqref{space_end}). Recall also that we consider a sequence $\lambda_i>0$ such that $\lambda_i\rightarrow 0$ as $i\rightarrow \infty$, which we denote as $\lambda\rightarrow^+0$.
\begin{proposition}\label{cor:periodization}\label{p.periodization}
   \begin{itemize}
\item[(a)]    Given $u^\lambda:\square \times \R\, \nu\rightarrow [-\mathfrak{s},\mathfrak{s}]$ satisfying the following:
    \begin{itemize}
        \item $\sup_\lambda \mathcal{G}(u^\lambda;\square \times \R\, \nu) <\infty$,
        \item the $\lambda$ traces agree with $v_\nu$ on the $\partial \square \times \R\, $ and on $\square \times \{-R,R\}\nu$,
    \end{itemize}
    then there is $\tilde{u}^\lambda\in \mathcal{X}_R(\square)$ such that 
    $$
        \limsup_{\lambda\rightarrow^+0} \Big(\|\tilde{u}^\lambda - u^\lambda\|_{L^1(\square \times \R\, \nu)}\Big) = 0
    $$
    and
    $$
        \limsup_{\lambda\rightarrow^+0}\Big(\mathcal{F}^\lambda(\tilde{u}^\lambda; \square \times \R\, \nu) -\mathcal{G}^\lambda(u^\lambda; \square \times \R\, \nu)\Big)\leq 0.
    $$
    
  \item[(b)]  If $u^\lambda:\square \times \R^+\, \nu\rightarrow [-\mathfrak{s},\mathfrak{s}]$ where $\nu_t > 0$ with
    \begin{itemize}
        \item $\sup_\lambda \mathcal{G}(u^\lambda;\square \times \R^+\, \nu) <\infty$,
        \item the $\lambda$-traces agree with $v_\nu$ on the $\partial \square \times \R^+\, $ and on $\square \times \{R\}\nu$,
        \item the $L^1$-trace converges on $\square \times \{0\}\nu$ to a constant $s_0\in (-\mathfrak{s},\mathfrak{s})$,
    \end{itemize}
    then there is $\tilde{u}^\lambda\in \mathcal{X}_R^{init}(s_0, \pm \mathfrak{s}, \square)$ such that 
    $$
        \limsup_{\lambda\rightarrow^+0} \Big(\|\tilde{u}^\lambda - u^\lambda\|_{L^1(\square \times \R^+\, \nu)}\Big) = 0
    $$
    and
    $$
        \limsup_{\lambda\rightarrow^+0}\Big(\mathcal{F}^\lambda(\tilde{u}^\lambda; \square \times \R^+\, \nu) -\mathcal{G}^\lambda(u^\lambda; \square \times \R^+\, \nu)\Big)\leq 0.
    $$
    
    \item[(c)] If $u^\lambda:\square \times \R^+\, \nu\rightarrow [-\mathfrak{s},\mathfrak{s}]$ where $\nu_t <0$ with \begin{itemize}
        \item $\sup_\lambda \mathcal{G}(u^\lambda;\square \times \R^+\, \nu) <\infty$,
        \item the $\lambda$-traces agree with $v_\nu$ on the $\partial \square \times \R^+\, $ and on $\square \times \{R\}\nu$,
    \end{itemize}
    then there is $\tilde{u}^\lambda\in \mathcal{X}_R^{end}(\pm \mathfrak{s}, \square)$ such that 
    $$
        \limsup_{\lambda\rightarrow^+0} \Big(\|\tilde{u}^\lambda - u^\lambda\|_{L^1(\square \times \R^+\, \nu)}\Big) = 0
    $$
    and
    $$
        \limsup_{\lambda\rightarrow^+0}\Big(\mathcal{F}^\lambda(\tilde{u}^\lambda; \square \times \R^+\, \nu) -\mathcal{G}^\lambda(u^\lambda; \square \times \R^+\, \nu)\Big)\leq 0.
    $$
    \end{itemize}
\end{proposition}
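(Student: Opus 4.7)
I would prove all three parts using a unified strategy: extend $u^\lambda$ to a $\square$-periodic function on $\R^{d+1}$ satisfying the required cutoffs, apply the defect estimate of \pref{patchingestimate} to absorb the discontinuities this extension creates, and mollify at sub-$\lambda$ scale to achieve $C^1$ regularity.

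For part (a), I would define a provisional extension $\bar u^\lambda$ equal to $u^\lambda$ on $\square \times [-R,R]\nu$, equal to $\pm\mathfrak{s}$ on $\square \times (\pm R,\pm\infty)\nu$, and $\square$-periodic in the directions orthogonal to $\nu$. This has potential discontinuities across the lattice translates of $\partial\square \times \R\nu$ and across $\square \times \{\pm R\}\nu$. On each such affine piece the $\lambda$-traces from both sides agree with $v_\nu$: from inside $\square$ by hypothesis, and from outside either by the $\square$-periodicity of $v_\nu$ (lateral faces) or because $v_\nu = \pm\mathfrak{s}$ on $\{(t,x)\cdot\nu = \pm R\}$ (top and bottom). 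Applying \pref{patchingestimate} to the finite union of patching surfaces within a fundamental domain produces $\hat u^\lambda$ with
\[\mathcal{G}^\lambda_{loc}(\hat u^\lambda;\square\times\R\nu)-\mathcal{G}^\lambda_{loc}(u^\lambda;\square\times\R\nu)\to 0,\]
since every term on the right hand side of \eref{defectRHS} vanishes in the limit by the $\lambda$-trace convergence hypothesis (\dref{lambdatrace}) once $\delta = \delta(\lambda)\to 0$ is chosen sufficiently slowly. The nonlocal locality defect $\mathcal{N}^\lambda(\hat u^\lambda;\square\times\R\nu,\R^{d+1}\setminus\square\times\R\nu)$ vanishes by Lemma \ref{lem:localization} because the matching traces $v_\nu$ on both sides of $\partial(\square\times\R\nu)$ cancel. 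Together these bounds yield $\mathcal{F}^\lambda(\hat u^\lambda;\square\times\R\nu) \leq \mathcal{G}^\lambda(u^\lambda;\square\times\R\nu) + o(1)$.

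To upgrade $\hat u^\lambda$ to a $C^1$ function in $\mathcal{X}_R(\square)$, I would first run the above construction with the slightly contracted cutoff $R' := R - 2\epsilon\lambda$ for a small $\epsilon > 0$, then convolve in space-time with a smooth radial mollifier of scale $\epsilon\lambda$. Convolution preserves $\square$-periodicity; since the pre-mollification function equals $\pm\mathfrak{s}$ wherever $|(t,x)\cdot\nu| \geq R - 2\epsilon\lambda$, after mollification at scale $\epsilon\lambda$ it still equals $\pm\mathfrak{s}$ wherever $|(t,x)\cdot\nu| \geq R$. Mollification at sub-$\lambda$ scale barely changes the energy: convexity of $\Psi$ via Jensen controls the temporal Dirichlet term, Lipschitz continuity of $\mathcal{W}_\beta$ on $[-\mathfrak{s},\mathfrak{s}]$ controls the double-well, and the nonlocal kernel $J^\lambda$ has essential support at scale $\lambda \gg \epsilon\lambda$ so the spatial Dirichlet term changes by only $O(\epsilon)$. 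Sending $\lambda \to 0$ first and then $\epsilon \to 0$ concludes part (a); the $L^1$ equivalence follows because all modifications are confined to $\lambda$-thick neighborhoods of the patching surfaces and the functions are bounded by $\mathfrak{s}$.

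Parts (b) and (c) use the identical scheme on the half-strip. In (c), no condition is imposed at $t = 0$ so only the cutoff at $(t,x)\cdot\nu = R$ is relevant. In (b), one must additionally enforce $\tilde u^\lambda(0,\cdot) \equiv s_0$: I would prescribe $\tilde u^\lambda \equiv s_0$ on the microscopic strip $\{0 \leq t \leq \epsilon\lambda\}$ and apply the defect estimate at $\{t = \epsilon\lambda\}$ with trace value $s_0$ on both sides. The $L^1$ convergence of the $\{t = 0\}$ trace of $u^\lambda$ to the constant $s_0 \in (-\mathfrak{s},\mathfrak{s})$, together with the energy bound, controls the cost of this additional patching via the same arguments as in (a). The main obstacle throughout, as foreshadowed in the paper's remark after \eref{space_X}, is that the $C^1$ requirement cannot be met by naive mollification because the $\Psi$ term in the energy penalizes newly created discontinuities severely; \pref{patchingestimate} is designed precisely to handle this, bounding the extra temporal regularization energy via the $\Psi_0$-integral in \dref{lambdatrace}. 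The delicate bookkeeping is simultaneously tracking the three small parameters $\lambda$, $\delta$, and $\epsilon$ so that each error term vanishes in the appropriate order of limits.
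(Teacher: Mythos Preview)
Your proposal is correct and follows essentially the same strategy as the paper: apply the defect estimate \pref{patchingestimate} (the paper routes this through Corollary~\ref{cor:patching}, but that corollary is itself a direct application of \pref{patchingestimate}) to patch $u^\lambda$ with its periodic translates and the constant $\pm\mathfrak{s}$ caps, invoke Lemma~\ref{lem:localization} to kill the nonlocal defect, and mollify at a sub-$\lambda$ scale to achieve $C^1$ regularity. Your treatment of (b) via prescribing $s_0$ on a thin strip is a minor variant of the paper's ``patch with $s_0$ on $\{t<0\}$, mollify, then shift forward,'' and your more careful bookkeeping of the $R' = R - 2\epsilon\lambda$ contraction and the three small parameters is a welcome elaboration of details the paper leaves implicit.
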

\begin{proof}
    Let $A_1 := \square \times \R\, \nu\rightarrow [-\mathfrak{s},\mathfrak{s}]$ and $A_2$ be the union of all the translations along one sidelength of $\square$.  Then Corollary \ref{cor:patching} constructs $\tilde{u}^\lambda$ on $A_1 \sqcup A_2$ with
    $$
\limsup_{\lambda\rightarrow^+0} \Big(\|\tilde{u}^\lambda - u^\lambda\|_{L^1(A_1)}+\|\tilde{u}^\lambda - u^\lambda\|_{L^1(A_2)}\Big) = 0
$$
and
$$
    \limsup_{\lambda\rightarrow^+0}\Big(\mathcal{G}^\lambda(\tilde{u}^\lambda; A_1\sqcup A_2) -\mathcal{G}^\lambda(u^\lambda; A_1) -\mathcal{G}^\lambda(u^\lambda; A_2) \Big)\leq 0.
$$
 The constructed $\tilde{u}^\lambda$ is periodic along the translations as the construction of \pref{patchingestimate} is local, making the adjustment of $\tilde{u}^\lambda$ on one edge the same as the adjustment of $\tilde{u}^\lambda$ on the opposite edge.  In this way we may consider $\tilde{u}^\lambda$ defined on all of $\R^{d+1}$.  
 
 We now proceed with a mollification of $\tilde{u}^\lambda$ to $\tilde{u}^\lambda_\epsilon$ at a scale $\epsilon$ much smaller than $\lambda$.  The mollification converges in $L^1$ and the local time gradient term is lower semicontinuous due to convexity.  Furthermore, the $\lambda$-trace error does not increase more than order $\lambda$, and thus the $\lambda$-traces of the mollified sequence converges.  By the nonlocal defect estimate of \pref{patchingestimate}, we have that the nonlocal defect vanishes across $A_1$ and $\R^{d+1}\backslash \bar{A_1}$ and thus from \eqref{FGN}  we have
 $$
        \limsup_{\lambda\rightarrow^+0}\Big(\mathcal{F}^\lambda(\tilde{u}^\lambda; \square \times \R\, \nu) -\mathcal{G}^\lambda(u^\lambda; \square \times \R\, \nu)\Big)\leq 0.
$$
 
 We proceed similarly at the initial and end times.  We need to patch on a boundary that is orthogonal to the time direction here. Having extended $u$ to the half space with $t\geq0$, we now also patch with the constant function $s_0$ on the half space with $t<0$.  We mollify the sequence at a scale $\epsilon$ and shift it forward on the $\lambda$ scale to construct a sequence $\tilde{u}^\lambda$ that agrees with the constant $s_0$ at $t=0$.  The shift also converges in $L^1$.
 
 The end time is exactly the same, except that we can simply extend by $\tilde{u}^\lambda(0,\cdot)$ to times greater than $0$.

\end{proof}

\subsection{ Proving \emph{(i)} of Theorem \ref{thm:main} by lower-semicontinuity.}\label{sec:lower-semicontinuity}\label{s.liminf}

In this section, we show part (i) of Theorem \ref{thm:main}, the lower bound inequality of the $\Gamma$-convergence: any sequence $s^\lambda$ with bounded cost which converges in $L^1$ to a limit $\bar{s}$ has asymptotic cost bounded from below by the effective cost $\bar{V}(s_0,g,\bar{s})$.  For this we follow a now standard idea introduced by Fonseca and M\"uller \cite{Fonseca1993}: it suffices to show that the $\mathcal{H}^d$ density of the limiting total variation measure is bounded from below by the respective value of the effective functional.  The key technical tool in this argument is the patching estimates  \pref{patchingestimate} and Proposition \ref{cor:periodization}, which allow us to patch the local values of $s^\lambda$ into a global periodic test minimizer for the appropriate cell problem.

\begin{proposition}\label{prop:lower-semicontinuity}
Consider a sequence $\hat{s}^\lambda$ satisfying
$$
    \hat{s}^\lambda(\lambda^{-1}\, \tau,\lambda^{-1}\, z)\rightarrow \bar{s}(\tau,z) \quad \hbox{ in } L^1([0,T]\times \mathbb{T}^d)
$$
and
$$
    \liminf_{\lambda\rightarrow 0}\mathcal{G}^\lambda(\hat{s}^\lambda;[0,T]\times \T^d) + \int_{\mathbb{T}^d} \Big[g(z)\,\hat{s}^\lambda(T,z) + \frac{1}{2\, \beta} \Phi\big(\hat{s}^\lambda(T,z)\big) - \frac{1}{2\, \beta} \Phi\big(\hat{s}^\lambda(0,z)\big) \Big] dz <+\infty.
    $$
    Then $\bar{s}\in BV((0,T)\times \mathbb{T}^d; \{\mathfrak{s},-\mathfrak{s}\})$ and

    $$
        \liminf_{\lambda\rightarrow^+ 0} \mathcal{G}^\lambda(\hat{s}^\lambda;[0,T]\times \T^d) + \int_{\mathbb{T}^d} \Big[g(z)\,\hat{s}^\lambda(T,z) + \frac{1}{2\, \beta} \Phi\big(\hat{s}^\lambda(T,z)\big) - \frac{1}{2\, \beta} \Phi\big(\hat{s}^\lambda(0,z)\big) \Big] dz\geq \bar{V}(s_0,g,\bar{s}).
    $$
\end{proposition}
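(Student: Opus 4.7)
The plan is a classical Fonseca--M\"uller blow-up argument, using the patching and trace tools of \sref{patching-lemma} to compensate for the mixed local/nonlocal character of the energy. The $BV$ conclusion is immediate from Proposition~\ref{prop:compactness}, so the interface $\Sigma=\partial_*\{\bar s=\mathfrak s\}\cap((0,T)\times\mathbb{T}^d)$ is countably $d$-rectifiable with measure-theoretic normal $\nu$. Consider the nonnegative Radon measures $\sigma^\lambda$ on $[0,T]\times\mathbb{T}^d$ whose density is the integrand of $\mathcal{G}^\lambda$; along a subsequence $\sigma^\lambda\rightharpoonup^*\sigma$. The explicit boundary integrand in $\mathcal{C}^\lambda$, namely $\int_{\mathbb{T}^d}[g\hat{s}^\lambda(T,\cdot)+\tfrac{1}{2\beta}\Phi(\hat{s}^\lambda(T,\cdot))-\tfrac{1}{2\beta}\Phi(s_0)]dz$, converges to the analogous expression with $\bar{s}(T,\cdot)$ replacing $\hat{s}^\lambda(T,\cdot)$ (trivial initial trace convergence, and $L^1$-convergence of terminal traces by the superlinear $\Psi$-bound on $\partial_\tau\hat{s}^\lambda$ together with Fubini). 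Accounting for the boundary constants built into the definitions of $V^{init}_R$ and $V^{end}_R$, the proposition reduces to the three pointwise density bounds
\begin{align*}
\tfrac{d\sigma}{d\mathcal{H}^d}(\tau_0,z_0) &\geq \bar{L}(\nu(\tau_0,z_0)) & &\mathcal{H}^d\text{-a.e. on }\Sigma,\\
\tfrac{d\sigma}{d\mathcal{H}^d}(0,z_0) &\geq V^{init}(s_0(z_0),\bar{s}(0,z_0))+\tfrac{1}{2\beta}\Phi(s_0(z_0)) & &\mathcal{H}^d\text{-a.e. }z_0\in\mathbb{T}^d, \\
\tfrac{d\sigma}{d\mathcal{H}^d}(T,z_0) &\geq V^{end}(\bar{s}(T,z_0),g(z_0))-g(z_0)\bar{s}(T,z_0)-\tfrac{1}{2\beta}\Phi(\bar{s}(T,z_0)) & &\mathcal{H}^d\text{-a.e. }z_0\in\mathbb{T}^d,
\end{align*}
and we conclude via the Besicovitch differentiation theorem.

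For the interior estimate at a Lebesgue point $(\tau_0,z_0)\in\Sigma$, blow up via $R_{(\tau_0,z_0),r}$ and apply the scaling identity of Lemma~\ref{lem:scaling_identity}: for any $\square\in\blacksquare_\nu$ and $r_n\downarrow 0$,
\[
\mathcal{F}^\lambda\bigl(\hat{s}^\lambda;(\tau_0,z_0)+r_n(\square\times[-R,R]\nu)\bigr)=r_n^{d}\,\mathcal{F}^{\lambda/r_n}\bigl(R_{(\tau_0,z_0),r_n}\hat{s}^\lambda;\square\times[-R,R]\nu\bigr).
\]
A diagonal extraction produces $u^n:=R_{(\tau_0,z_0),r_n}\hat{s}^{\lambda_n}\to v_\nu$ in $L^1(\square\times[-R,R]\nu)$. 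Lemma~\ref{l.tracesonaesurface} applied to the foliations by translates of $\partial\square\times\R\nu$ and of $\square\times\{\pm R\}\nu$ lets us arrange, after perturbing the cube $\square$ and the height $R$ on a generic set, that the $(\lambda_n/r_n)$-traces of $u^n$ on all four side boundaries converge to $v_\nu$. Proposition~\ref{cor:periodization}(a) then patches $u^n$ into a $\square$-periodic element of $\mathcal{X}_R(\square)$ whose asymptotic energy is no larger than $\mathcal{F}^{\lambda_n/r_n}(u^n;\square\times[-R,R]\nu)$; by the definition of $\bar{L}_R(\nu)$ followed by sending $R\to\infty$, we obtain the interior density bound. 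The initial and terminal estimates follow from the identical scheme with one-sided strips $\square\times\R^+\nu$ and Proposition~\ref{cor:periodization}(b), (c); the cell-problem values that appear then match $V^{init}_R+\tfrac{1}{2\beta}\Phi(s_0(z_0))$ and $V^{end}_R-g(z_0)\bar{s}(T,z_0)-\tfrac{1}{2\beta}\Phi(\bar{s}(T,z_0))$ respectively, exactly reversing the boundary bookkeeping of the first paragraph.

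The main technical obstacle is the joint blow-up limit: we need the diagonal $\lambda_n/r_n\to 0$ slow enough that simultaneously (i) $u^n\to v_\nu$ in $L^1$ on the slab, (ii) the $(\lambda_n/r_n)$-traces of $u^n$ on all four side boundaries converge to $v_\nu$ in the sense of \dref{lambdatrace}, and (iii) the density quotient of $\sigma^\lambda$ at $(\tau_0,z_0)$ converges to $d\sigma/d\mathcal{H}^d(\tau_0,z_0)$. Items (i) and (iii) are standard Fonseca--M\"uller diagonal extractions at a Lebesgue point; (ii) is the delicate point and is where the freedom in choosing the cube $\square$ and height $R$ enters, by the co-area argument underlying Lemma~\ref{l.tracesonaesurface} which yields trace convergence on almost every element of any suitable Lipschitz foliation. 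A secondary subtlety is the $L^1$-convergence of the terminal trace $\hat{s}^\lambda(T,\cdot)\to\bar{s}(T,\cdot)$ used to localize the terminal boundary integrand; this follows from the superlinear temporal control afforded by $\Psi$ in $\mathcal{G}^\lambda$, interpreted as a $BV$-type trace continuity under $L^1$-convergence of the bulk sequence.
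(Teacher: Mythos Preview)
Your interior blow-up argument and use of the patching/periodization machinery match the paper's proof closely, and the initial-time bookkeeping is fine because $\hat s^\lambda(0,\cdot)=s_0$ is imposed. The genuine gap is at the terminal time.

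You claim that $\hat s^\lambda(T,\cdot)\to\bar s(T,\cdot)$ in $L^1$, invoking ``$BV$-type trace continuity under $L^1$-convergence of the bulk sequence.'' This is false: $BV$ traces are \emph{not} continuous under $L^1$ convergence of the bulk, and the superlinear $\Psi$ bound on $\partial_\tau\hat s^\lambda$ only yields a uniform $L^1$-type control on the time derivative, i.e.\ $BV$-type compactness, which does not upgrade to trace convergence. Worse, for the actual minimizers the terminal trace \emph{does not} converge to $\bar s(T,\cdot)=\pm\mathfrak s$: as noted in Remark~\ref{layer_symmetry} and \eref{vendformula}, the optimal terminal value relaxes away from $\pm\mathfrak s$ in an $O(\lambda)$ boundary layer. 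Consequently your separation of the terminal integrand from the bulk energy, and the resulting density bound $d\sigma/d\mathcal H^d(T,z_0)\ge V^{end}-g\bar s(T,z_0)-\tfrac1{2\beta}\Phi(\bar s(T,z_0))$, are both incorrect: the bulk boundary-layer cost and the terminal integral are coupled through the free terminal value $s(0,x)$ in the cell problem for $V^{end}$, and cannot be lower-bounded independently.

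The paper avoids this entirely by building the boundary integrands \emph{into} the energy measure $\sigma^\lambda$, i.e.\ $\sigma^\lambda(A)$ includes $\int_{A\cap\{\tau=T\}}[g\hat s^\lambda(T,\cdot)+\tfrac1{2\beta}\Phi(\hat s^\lambda(T,\cdot))]\,dz$ and the corresponding initial term. Then the density bound at $\tau=T$ is established directly as $d\sigma/d\mathcal H^d(T,z_0)\ge V^{end}(\bar s(T,z_0),g(z_0))$ by a half-cube blow-up: the rescaled terminal integral stays attached to the rescaled bulk energy, and after periodization via Proposition~\ref{cor:periodization}(c) the sum is bounded below by the cell-problem quantity defining $V^{end}_R$. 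No trace convergence at $\tau=T$ is ever asserted.
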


\begin{proof}
    For each point $(\tau,z)\in [0,T]\times \mathbb{T}^d$, define the energy density
    $$
        h^\lambda(\tau,z) := \lambda^{-1}\, \Big[\mathcal{W}_\beta\big(\hat{s}^\lambda(\tau,z)\big) + \frac{1}{2\beta}  \Psi\big(\hat{s}^\lambda(\tau,z),\lambda\, \partial_\tau \hat{s}^\lambda(\tau,z)\big) + \frac{1}{4}\int_{ \T^d}J^\lambda(z-w)\big(\hat{s}^\lambda(\tau,z)-\hat{s}^\lambda(\tau,w)\big)^2\, dw\Big],
    $$
    and the energy measure
    $$
        \sigma^\lambda(A) := \int\int_A h^\lambda(\tau,z)dz\, d\tau+\int_{A\cap \{T\}\times \mathbb{T}^d}\Big( g(z)\, \hat{s}^\lambda(T,z)+\frac{1}{2\beta}\Phi\big(\hat{s}^\lambda(T,z)\big)\Big)dz - \int_{A\cap \{0\}\times \mathbb{T}^d}\frac{1}{2\beta}\Phi\big(\hat{s}^\lambda(0,z)\big)dz,
    $$
    for a measurable set $A$ in $[0,T]\times \mathbb{T}^d$.

    Note that 
    $$\sigma^\lambda(A) = \mathcal{G}^\lambda(\hat{s}^\lambda;A) + \mathcal{N}^\lambda(\hat{s}^\lambda;A,[0,T]\times \mathbb{T}^d\backslash A) \hbox{ for any }A \subset (0,T)\times\T^d.$$ In particular, the total mass of $\sigma^\lambda$ is bounded above by the total cost. Therefore there is a subsequence and a nonnegative measure $\sigma$ on $[0,T]\times \T^d$ such that the $\sigma^\lambda$ converge in the weak-$\star$ topology, $\sigma^\lambda \rightharpoonup^\star \sigma$.  
   
   We aim to show the following density lower bounds with respect to the interfacial surface measure as well as the initial and end-time surface measures. Call $\Sigma$ to be the set of points $(\tau,z) \in (0,T) \times \T^d$ where the measure theoretic limit of $\bar{s}$ is not in $\pm \mathfrak{s}$. Note that by our definition this is does not include any initial or final time points.
    \begin{enumerate}[label = (\alph*)]
        \item On $(0,T)\times \mathbb{T}^d$
        $$
            \frac{d\sigma}{d\mathcal{H}^d|_{\Sigma}}(\tau,z) \geq \bar{L}\big(\nu(\tau,z)\big) \hbox{ for $\mathcal{H}^d|_{\Sigma}$-a.e. } (\tau,z) \in \Sigma.
        $$
        Here $\nu(\tau,z)$ is the measure theoretic unit normal direction pointing outward to $\{\bar{s} = \mathfrak{s}\}$, defined $\mathcal{H}^{d}|_{\Sigma}$-almost everywhere.
        \item On $\tau=0$, 
        $$
            \frac{d\sigma}{d\mathcal{H}^d|_{\tau=T}}(0,z) \geq V^{init}\big(s_0(z),\bar{s}(0,z)\big) \hbox{ for $\mathcal{H}^d|_{\tau = T}$-a.e. } z \in \T^d.
        $$
        \item On $\tau=T$, 
        $$
            \frac{d{\sigma}}{d\mathcal{H}^d|_{\tau=0}}(T,z) \geq V^{end}\big(\bar{s}(T,z),g(z)\big)  \hbox{ for $\mathcal{H}^d|_{\tau = 0}$-a.e. } z \in \T^d.
        $$
        
    \end{enumerate}

    To begin the proof of (a), we consider a point $(\tau_0,z_0)\in \Sigma$ where the outward unit-normal $\nu_0$ to $\{\bar{s} = - \mathfrak{s}\}$ is well defined. Consider a unit $d$-cube in the subspace orthogonal to $\nu_0$, $\square\in \blacksquare_{\nu_0}$ from the definitions of (\ref{eqn:interfacial_energy_R}). Call the $d+1$ dimensional unit cube $Q = \Box \times [-1/2,1/2]\nu_0$ with one of the axes oriented in the $\nu_0$ direction and also the rescaled cubes $(\tau_0,z_0) + r\, Q$ that are centered at $(\tau_0,z_0)$  with side lengths $r$.      We say points $(\tau_0,z_0)$ are {\it regular} if the limit exists
    \begin{align}\label{eqn:regular_point_cube}
        \frac{d\sigma}{d\mathcal{H}^d|_{\Sigma}}(\tau_0,z_0) = \lim_{r\rightarrow 0}\frac{\sigma((\tau_0,z_0) + r\, Q)}{r^{d}},
    \end{align}
    and the rescaled $\bar{s}$, see the notation defined in \eqref{rescaled_s}, satisfies
    \begin{align}\label{eqn:regular_point_step}
         R_{(\tau_0,z_0),r}\bar{s} \rightarrow v_{\nu_0} \hbox{ strongly in } L^1_{loc},
    \end{align}
    where $v_{\nu}$ is the step function from \eqref{step}.  Standard results \cite{EG} imply that  (\ref{eqn:regular_point_cube}) and (\ref{eqn:regular_point_step}) will hold $\mathcal{H}^d|_{\Sigma}$ almost everywhere provided that $\Sigma$ is rectifiable, which holds for the jump set when $\bar{s}$ is in BV. Similarly, for (b) and (c), these conditions hold at the beginning and end times where $\Sigma$ is replaced by the slice $\{0\}\times \mathbb{T}^d$ or $\{T\} \times \mathbb{T}^d$, and $\nu$ is replaced by the appropriate normal vector (the sign of  $\bar{s}(0,z)$ or negative sign of $\bar{s}(T,z)$ in the time direction). 
    
    Now consider a regular point $(\tau_0,z_0) \in \Sigma$ as above satisfying (\ref{eqn:regular_point_cube}) and (\ref{eqn:regular_point_step}). From weak convergence, we deduce that
    $$
        \lim_{\lambda\rightarrow 0}\sigma^\lambda\big((\tau_0,z_0) + r\, Q\big) = \sigma\big((\tau_0,z_0) + r\, Q\big)
    $$
    except for a countable set $N$ of values of $r$.  Furthermore, by (\ref{eqn:regular_point_cube}) we have
    $$
        \lim_{r\rightarrow 0;\ r\not\in N}  \lim_{\lambda\rightarrow^+0}\frac{\sigma^\lambda\big((\tau_0,z_0) + r\, Q\big)}{r^d} =  \lim_{r\rightarrow 0;\ r\not\in N}  \frac{\sigma\big((\tau_0,z_0) + r\, Q\big)}{r^d} = \frac{d\sigma}{d\mathcal{H}^d|_{\Sigma}}(\tau_0,z_0).
    $$
    Since $\hat{s}^\lambda \rightarrow \bar{s}$ in $L^1$, by (\ref{eqn:regular_point_step}) we have
    $$
        \lim_{r\rightarrow 0;\ r\not\in N}\lim_{\lambda\rightarrow^+0} R_{(\tau_0,z_0),r}\hat{s}^\lambda =  \lim_{r\rightarrow 0;\ r\not\in N} R_{(\tau_0,z_0),r}\bar{s} = v_{\nu_0} \hbox{ in } L^1.
    $$
    Then we can choose sequences $r_i$ and $\lambda_i$ such that
    \begin{align*}
        \lim_{i\rightarrow \infty } r_i = \lim_{i\rightarrow \infty } \frac{\lambda_i}{r_i} = 0,\\
         \lim_{i\rightarrow\infty}\frac{\sigma^{\lambda_i}\big((\tau_0,z_0) + r_i\, Q\big)}{r_i^d}  = \frac{d\sigma}{d\mathcal{H}^d|_{\Sigma}}(\tau_0,z_0),\\
          \lim_{i\rightarrow \infty } R_{(\tau_0,z_0),r_i}\hat{s}^{\lambda_i}  = v_{\nu_0} \hbox{ in } L^1.
    \end{align*}
    
 By the scaling property of $\mathcal{G}^\lambda$ (Lemma  \ref{lem:scaling_identity}), and by dropping the remainder of the nonlocal term away from $(\tau_0,z_0)+r_i\, Q$, we have
 \begin{align*}
        &\ \frac{\sigma^{\lambda_i}\big((\tau_0,z_0) + r_i\, Q\big)}{r_i^d} \\
        \geq&\  \frac{\mathcal{G}^{\lambda_i}\big(\hat{s}^{\lambda_i}; (\tau_0,z_0) + r_i\, Q\big)}{r_i^d}\\
        =&\ \mathcal{G}^{\lambda_i/r_i}\big(R_{(\tau_0,z_0),r_i}\hat{s}^{\lambda_i}; Q\big).
    \end{align*}

    By \lref{tracesonaesurface} we can choose $t \in (0,1)$ arbitrarily close to $1$ so that, up to a subsequence, the $\lambda_i$-traces of $R_{(\tau_0,z_0),r_i}\hat{s}^{\lambda_i}$ on $\partial (t\, Q)$ converge to $v_{\nu_0}$ in the sense of \dref{lambdatrace}.  The cost decreases since the new cube is smaller:
     \begin{align*}
        \mathcal{G}^{\lambda_i/r_i}\big(R_{(\tau_0,z_0),r_i}s^{\lambda_i}; Q\big)\geq \mathcal{G}^{\lambda_i/r_i}\big(R_{(\tau_0,z_0),r_i}s^{\lambda_i}; t\, Q\big).
    \end{align*}

      Using \pref{patchingestimate} we construct $s_{patched}^i$ which ``extends" $R_{(\tau_0,z_0),r_i}s^{\lambda_i}$ to $t\, \square \times \R\, \nu_0$ by patching with $v_{\nu_0}$ at distance $t/2$ away from the tangent hyperplane. Corollary \ref{cor:patching} and the convergence of the $\lambda_i$-traces of $R_{(\tau_0,z_0),r_i}s^{\lambda_i}$ to $v_{\nu_0}$ on $\partial t Q \cap \{(\tau-\tau_0,z-z_0)\cdot\nu_0 = \pm t/2\}$ shows that 
      \begin{align*}
          \liminf_{i\rightarrow \infty}\Big\{\mathcal{G}^{\lambda_i/r_i}\big(R_{(\tau_0,z_0),r_i}s^{\lambda_i}; t\, Q\big) + \mathcal{G}^{\lambda_i}(
          v_{\nu_0}; (t\, \square \times \R\, \nu_0) \backslash t\, Q\big) - \mathcal{G}^{\lambda_i/r_i}\big(s_{patched}^i;t\, \square \times \R\, \nu_0\big)\Big\} \geq 0.
      \end{align*}
      We use \pref{periodization}(a) to further replace $s_{patched}^i$ by $s_{periodic}^i$, a $t\, \square$ periodic function of $\R^{d+1}$.  This does not increase the cost due to the agreement of the $\lambda^i$ trace limits along the boundary of $t\, \square \times \R\, \nu_0$,
      \begin{align*}
         \liminf_{i\rightarrow \infty}\Big\{ \mathcal{G}^{\lambda_i/r_i}\big(s_{patched}^i;t\, \square \times \R\, \nu_0\big)- \mathcal{G}^{\lambda_i/r_i}\big(s_{periodic}^i;t\, \square \times \R\, \nu_0\big)\Big\} \geq 0.
      \end{align*}
      Since $v_{\nu_0}$ is constant on the components of $(t\, \square \times \R\, \nu_0) \backslash t\, Q$, we have
      $$
        \mathcal{G}^{\lambda_i}(
          v_{\nu_0}; (t\, \square \times \R\, \nu_0) \backslash t\, Q\big)=\mathcal{F}^{\lambda_i}(
          v_{\nu_0}; (t\, \square \times \R\, \nu_0) \backslash t\, Q\big)= 0.
      $$
    Lemma \ref{lem:localization} bounds the nonlocal defect for the periodic approximation $s^i_{periodic}$ so that
          $$
             \liminf_{i\rightarrow \infty }\Big\{\mathcal{G}^{\lambda_i}(
          s_{periodic}^i; t\, \square \times \R\, \nu_0\big)-\mathcal{F}^{\lambda_i}(
         s_{periodic}^i;t\, \square \times \R\, \nu_0\big)\Big\}\geq 0.
          $$
    
    Again using the scaling Lemma \ref{lem:scaling_identity}, and Proposition \ref{p.periodization} that allows us to assume that $s_{periodic}^i$ is continuously differentiable,
     we have
    $$
        t^d\, \bar{L}_R\big(\nu_0\big)\leq \mathcal{F}^{\lambda_i/r_i}\big(s_{periodic}^i, t\, \square \times \R\, \nu_0\big)
    $$
      which concludes the proof for (a) after chaining together the inequalities and taking $t$ close to $1$.

    \medskip
    
    At the initial time the argument is identical, except that when defining $s_{periodic}^i$ we must enforce that $s_{periodic}^i\in \mathcal{X}_R^{init}(s_0(z),\bar{s}(0,z),\square)$, i.e., that $s_{periodic}^i(0,x) = s_0(z)$.  This is also done by \pref{periodization}(b) by patching with the constant function $s_0(z)$ in the domain $t<0$ and shifting slightly forward in time so that $s_{periodic}^i(0,x) = s_0(z)$ holds. The rest of the argument goes through exactly working on $t\, \square \times \R^+\, \nu_0$ where $\nu_0$ points forward in time.

    \medskip

At the final time we have (for $Q^-$ the intersection of the cube with the lower half plane and $\nu_1$ the unit-vector in the negative time direction)
\begin{align*}
    &\ \frac{{\sigma}^{\lambda_i}_{end}\big((\tau_0,z_0) + r_i\, Q^-\big)}{r_i^d} \\
        &\geq\  \frac{\mathcal{G}^{\lambda_i}\big(s^{\lambda_i}; (\tau_0,z_0) + r_i\, Q^-\big)}{r_i^d} + r^{-d}\int_{z+r\, \square}\Big(g(x)\, \hat{s}^{\lambda_i}(T,x) + \frac{1}{2\beta} \Phi\big(\hat{s}^{\lambda_i}(T,x)\big)\Big)dx\\
        =&\ \mathcal{G}^{\lambda_i/r_i}\big(R_{(\tau_0,z_0),r_i}s^{\lambda_i}; Q^-\big) + \int_\square\Big( g(z+r_i\, y)\,\hat{s}^{\lambda_i}(T,z+ r\, y)+\frac{1}{2\beta} \Phi\big(\hat{s}^{\lambda_i}(T,z+r\, y)\big)\Big)dy.
\end{align*}
At points of Lebesgue density of $g$, we can approximately replace $g(z+r_i\, y)$ in the line above with $g(z)$.  As before we construct $s_{periodic}^i \in \mathcal{X}_R^{end}(\bar{s}(T,z),\square)$ in Proposition \ref{p.periodization} (c), making sure to preserve
\begin{align*}
    \liminf_{i\rightarrow\infty}\big\{&\int_{t\square}\Big(g(z)\, R_{(T,z),r_i}s(0,x) + \frac{1}{2\beta}\Phi\big( R_{(T,z),r_i}s(0,x)\big)\Big)dx\\
    &\ -\int_{t\square}\Big(g(z)\, s_{periodic}^i(0,x)+\frac{1}{2\beta}\Phi\big(s_{periodic}^i(0,x)\big)\Big)dx\big\}\leq 0.
\end{align*}

We arrive at, using again Lemma \ref{lem:localization} to equate $\mathcal{F}^{\lambda_i/r_i}\big(s_{periodic}^i;t\, \square \times \R^+\, n_1\big)$ and $\mathcal{G}^{\lambda_i/r_i}\big(s_{periodic}^i;t\, \square \times \R^+\, n_1\big)$,
    \begin{align*}
        \liminf_{i\rightarrow \infty}\Big\{&\mathcal{G}^{\lambda_i/r_i}\big(R_{(T,z),r_i}s^{\lambda_i}; t\, Q^-\big) +  \int_{  t\, \square}\Big(g\big(z+r_i\, y)\, s^{\lambda_i}(T,z+r_i\, y)+\frac{1}{2\beta}\Phi\big(s^{\lambda_i}(T,z+r_i\, y)\big)\Big)dy\\
        &\ -\mathcal{F}^{\lambda_i/r_i}\big(s_{periodic}^i;t\, \square \times \R^+\, n_1\big) -\int_{t\square}\Big(g(z)\, s_{periodic}^i(0,x)+\frac{1}{2\beta}\Phi\big(s_{periodic}^i(0,x)\big)\Big)dx\Big\}\geq 0.
    \end{align*}
    We repeat the final scaling argument with
    $$
       t^d\, V^{end}\big(\bar{s}(T,z), g(z)\big)\leq \mathcal{F}^{\lambda_i/r_i}\big(s_{periodic}^i;t\, \square \times \R^+\, n_1\big) +
       \int_{t\square}\Big(g(z)\, s_{periodic}^i(0,x)+\frac{1}{2\beta}\Phi\big(s_{periodic}^i(0,x)\big)\Big)dx,
    $$
    to conclude the claim of (c).
    
\end{proof}

\begin{proof}[Proof of Theorem \ref{thm:main} part (i)]
Let $s^\lambda$ as in the statement and choose a subsequence so that 
\[\lim_{i \to 0} C^{\lambda_i}(s^{\lambda_i},a^{\lambda_i}) = \liminf_{\lambda \to 0}C^{\lambda}(s^{\lambda},a^{\lambda}).\]  By Proposition \ref{prop:compactness} $s^{\lambda_i}$ has a subsequence (not relabeled) converging in $L^1((0,T) \times \T^d)$.  Now the hypotheses of Proposition \ref{prop:lower-semicontinuity} are satisfied and the conclusion of \ref{prop:lower-semicontinuity} is the desired conclusion of Theorem \ref{thm:main}.

\end{proof}

\subsection{Proof of \emph{(ii)} of Theorem \ref{thm:main} by an upper bound inequality}\label{s.limsup}

The difficulty in constructing the recovery sequence lies in approximating a general smooth interface locally by flat interfaces. We  follow the beautiful idea introduced by Alberti and Bellettini \cite{alberti1998non}.  Essentially the concept is to reduce to the case of polyhedral sets, via a typical argument with a Reshetnyak Theorem \cite{spector2011simple}, and then prove the case of polyhedral sets by an inductive argument.  We can mostly follow \cite{alberti1998non} until we come to the point of ``patching" neighboring cells at which point we reuse the ideas from \sref{patching-lemma}.  
\begin{definition}
We say that two sets $E$ and $F$ in $\R^{d+1}$ are {\it transversal} if $\mathcal{H}^{d}(E \cap F) = 0$.
\end{definition}
\begin{definition}
\begin{itemize}
\item An $d+1$-dimensional {\it polyhedral } set $E$ in $\R^{d+1}$ is an open or closed set whose boundary is a Lipschitz surface contained in the union of finitely many affine hyperplanes.  The faces of $\partial E$ are intersections of $\partial E$ with one of those hyperplanes, edges points of $E$ are boundary points which are in multiple faces.  The normal direction $\nu_E$ is defined at all non-edge points.  

\item A $k$-dimensional polyhedral set is a polyhedral set in a $k$-dimensional affine subspace or the closure of such a set. Note that intersections of polyhedral sets in $\R^{d+1}$ with $k$-dimensional affine subspaces are $k$-dimensional polyhedral sets.

\item A polyhedral set in a domain $\Omega\subset \R^{d+1}$ is the intersection of a polyhedral set in $\R^{d+1}$ with $\Omega$.  

\item A function ${s} \in BV(\Omega; \{ \pm \mathfrak{s}\})$ is called a polyhedral function if there is an $d+1$ dimensional polyhedral set $E$ which has $\partial E$ transversal to $\partial \Omega$, such that $s = \mathfrak{s}{\bf 1}_{E} - \mathfrak{s} {\bf 1}_{E^C}$ almost everywhere in $\Omega$.  More generally $f \in BV(\Omega,\R)$ is called polyhedral if there is a finite collection of disjoint polyhedral sets $E_j$ so that $\cup \overline{E}_j \cap \Omega = \Omega$ and $f$ is constant on each $E_j$.
\end{itemize}
\end{definition}
We also make the following notation: given a set $E$ in $\R^N$ and $\delta>0$ we call $E_\delta$ to be the set of points with Euclidean distance at most $\delta$ to $E$.

\medskip

We may localize the limit energy $\bar{V}$ from (\ref{eqn:bar_V}) on an open subset $A\subset [0,T]\times \mathbb{T}^d$ as
\begin{equation}\label{eqn:bar_V_star_loc}
    \bar{V}(s_0,g,\bar{s};A) :=  \int_{ A_0}[V^{init}\big({s}_0(z),\bar{s}(0,z)\big) dz + \int_{A_T} V^{end}\big(\bar{s}(T,z) ,g(z)\big) dz+\int_{A\backslash (A_0\cup A_T)} \bar{L}\big(\nu(\tau,z)\big) d\mathcal{H}^d.\nonumber
   \end{equation}

Now we construct the recovery sequence for polyhedral functions $s_0$ and $g$.
\begin{theorem}\label{t.polyhedral-limsup}
Let $\bar{s} \in BV((0,T)\times \mathbb{T}^d; \{\pm \mathfrak{s}\})$, $s_0 \in BV(\T^d;(-\mathfrak{s},\mathfrak{s}))$, and $g\in BV(\T^d; \R)$ with $|g|\leq \frac{1}{2\beta} \Phi'(\mathfrak{s})$ be polyhedral functions.    There are functions $s^\lambda$ on $[0,T]\times \mathbb{T}^d$ with $\lim_{\lambda\rightarrow^+0}\|s^\lambda(0,\cdot) - s_0\|_{L^1(\T^d)}=0$ and $|s^\lambda| \leq \mathfrak{s}$ so that $s^\lambda \to \bar{s}$ uniformly on every compact subset of $(0,T)\times \mathbb{T}^d \setminus \textup{Jump}(\bar{s})$ and
\[ \limsup_{\lambda \to^+ 0} \Big\{\mathcal{G}^\lambda(s^\lambda; (0,T)\times\mathbb{T}^d\big)d\tau + \int_{\mathbb{T}^d}\Big[ {s}^\lambda(T,z){g}(z)+\frac{1}{2\beta}\Phi\big(s^\lambda(T,z)\big)\Big]dz - \int_{\mathbb{T}^d}\frac{1}{2\beta}\Phi\big(s_0(z)\big)dz \Big\}\leq \bar{V}(s_0,g,\bar{s}).\]
\end{theorem}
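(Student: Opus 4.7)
The strategy is the polyhedral induction of Alberti--Bellettini \cite{alberti1998non}, adapted to our space-time setting using the patching machinery of \sref{patching-lemma}. The key point is that for polyhedral data we only need to glue together ``nearly optimal'' periodic test functions drawn from the cell problems that define $\bar{L}(\nu)$, $V^{init}(s_0,\bar{s})$, and $V^{end}(\bar{s},g)$, and that such gluing can be done with controlled error along faces of codimension $\geq 1$ provided we first upgrade each local test function so that its $\lambda$-traces on the neighboring faces agree with the step function $v_{\nu}$ from \eref{step}.

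First, I decompose $[0,T]\times \T^d$ into finitely many open polyhedral cells $\{E_j\}$ on which $\bar{s}$ is constant (and, when $E_j$ touches $\tau=0$ or $\tau=T$, on which $s_0$ or $g$ is also constant). The jump set $\Sigma$ is then a finite union of transversal $d$-faces $\{F_k\}$ with constant normals $\nu_k$; likewise the jump sets of $s_0$ and $g$ on $\{\tau=0\}$ and $\{\tau=T\}$ are finite unions of $(d-1)$-faces. Fix $\varepsilon>0$ and $R_\varepsilon$ large, and for each face $F_k$ of $\Sigma$ pick $q_k \in \mathcal{X}_{R_\varepsilon}(\square_k)$, $\square_k \in \blacksquare_{\nu_k}$, with
\[ |\square_k|^{-1}\mathcal{F}^1(q_k;\square_k \times \R\nu_k) \leq \bar{L}(\nu_k)+\varepsilon; \]
for the initial/terminal faces pick near-minimizers of \eref{initial_energy_R}, \eref{terminal_energy_R} in the classes $\mathcal{X}_{R_\varepsilon}^{init}$, $\mathcal{X}_{R_\varepsilon}^{end}$. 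Each such $q_k$ is bounded away from $\Sigma$ by the constant $\pm \mathfrak{s}$ dictated by $\bar{s}$. I then define $s^\lambda$ by (a) setting $s^\lambda \equiv \pm \mathfrak{s}$ inside each cell $E_j$ away from a $\lambda R_\varepsilon$-neighborhood of $\partial E_j$, (b) inside the $\lambda R_\varepsilon$-tube around each face $F_k$, setting $s^\lambda(\tau,z) := q_k(\lambda^{-1}(\tau,z))$ (properly centered on $F_k$ and $\square_k$-tiled along $F_k$), and similarly with the initial/terminal cell-problem minimizers on the slabs $[0,\lambda R_\varepsilon]\times \T^d$ and $[T-\lambda R_\varepsilon,T]\times \T^d$.

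Next, I patch these local definitions near the edges of $\partial E_j$ (intersections of faces, of dimension $\leq d-1$) and near the interfaces $\{\tau=\lambda R_\varepsilon\}$ where the initial layer meets the bulk. This is precisely the setting of Corollary~\ref{cor:patching}: on a tubular neighborhood of an edge between two faces $F_k$, $F_\ell$ the relevant $\lambda$-traces of the rescaled $q_k$ and $q_\ell$ both converge to the same step function $v_{\nu}$ determined by the adjacent values $\pm\mathfrak{s}$ of $\bar{s}$, because by the cut-off condition \eref{cutoff-cond} each $q$ attains the constant $\pm\mathfrak{s}$ outside a bounded window in normal coordinates. Consequently \pref{patchingestimate} gives a patched function $s^\lambda$ with
\[ \mathcal{G}^\lambda_{loc}(s^\lambda;\cdot)-\sum_k \mathcal{G}^\lambda_{loc}(q_k;\cdot)+|\text{nonlocal defect}| \leq C\int_{\text{edge}}|v_+-v_-||\nu_t|\,d\mathcal{H}^d + o(1), \]
and the edge integral vanishes since edges have $\mathcal{H}^d$-measure zero, while the $\lambda$-neighborhoods of edges have Lebesgue measure $O(\lambda)$ so they contribute $o(1)$ to the bulk energy. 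The initial condition $s^\lambda(0,\cdot)=s_0$ is preserved by \pref{periodization}(b); $L^1$ convergence $s^\lambda\to \bar{s}$ and uniform convergence on compact subsets away from $\Sigma$ follow because each rescaled $q_k$ is already $\pm\mathfrak{s}$ outside an $O(\lambda)$-tube. Summing the contributions of all interfacial, initial, and terminal faces then yields
\[ \limsup_{\lambda\to 0^+}\mathcal{C}^\lambda \leq \sum_k |F_k|(\bar{L}(\nu_k)+\varepsilon) + \sum_{F_0}|F_0|(V^{init}+\varepsilon) + \sum_{F_T}|F_T|(V^{end}+\varepsilon), \]
and letting $\varepsilon\to 0$ along a diagonal subsequence gives the claim.

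The main obstacle is the patching across edges, where two cell-problem minimizers with different normal directions meet and the temporal ``Dirichlet'' penalty $\lambda^{-1}\Psi(s,\lambda\partial_\tau s)$ would blow up under a naive piecewise definition. This is exactly what \pref{patchingestimate} was built to handle, but it requires us to verify trace convergence in the sense of \dref{lambdatrace}, which in turn forces the use of the width parameter $R$ in the cell problems (so that each $q_k$ is identically $\pm\mathfrak{s}$ on the sides of $\square_k\times\R\nu_k$). A secondary subtlety is that along an edge between an interfacial face and a boundary face $\{\tau=0\}$ or $\{\tau=T\}$ the cell problem functions live on half-strips with the time direction singled out; here I use the variants \pref{periodization}(b) and (c) to ensure the correct initial/terminal trace while preserving periodicity in the spatial coordinates, and the additional $\tfrac{1}{2\beta}\Phi$ boundary terms in $\mathcal{C}^\lambda$ match the corresponding terms built into $V^{init}_R$ and $V^{end}_R$.
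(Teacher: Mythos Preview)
Your overall plan is the paper's: both follow the Alberti--Bellettini polyhedral induction, placing rescaled near-minimizers of the cell problems for $\bar L$, $V^{init}$, $V^{end}$ on the faces of $\Gamma=\textup{Jump}(\bar s)\cup(\{0\}\cup\{T\})\times\T^d$ and then patching. The paper packages this as an inductive class $\mathcal A$ of polyhedral subdomains with three closure properties: (a) domains disjoint from $\Gamma$ are in $\mathcal A$ trivially; (b) domains projecting onto a single face of $\Gamma$ are in $\mathcal A$ via the rescaled periodic cell minimizer; (c) $\mathcal A$ is closed under $\sqcup$.

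The substantive difference is in the gluing step. You propose to use Corollary~\ref{cor:patching} at the edges where tubes around different faces meet. The paper does \emph{not} do this in step (c); instead it writes $s^\lambda=\zeta\,\phi^\lambda\!*\tilde s^\lambda+(1-\zeta)\tilde s^\lambda$ with $\zeta$ a cutoff supported in a $2r$-neighborhood of the common boundary $\Delta=\partial A_1\cap\partial A_2$, and exploits transversality of $\Delta$ and $\Gamma$ to get $|\Gamma_{K\lambda}\cap\Delta_{2r}|\le C\lambda r$, which makes every error term in $\mathcal G^\lambda$ of size $O(r)$; then $r\to 0$. This bypasses the need to verify $\lambda$-trace convergence on a \emph{prescribed} surface. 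Your route can be made to work, but two points need repair: (i) on the $d$-dimensional dividing surface the limiting traces are not the step functions $v_{\nu_k},v_{\nu_\ell}$ (which differ since $\nu_k\neq\nu_\ell$) but rather $\bar s$ itself, which agrees $\mathcal H^d$-a.e.\ because the surface is transversal to $\Gamma$; and (ii) Corollary~\ref{cor:patching} requires $\lambda$-trace convergence in the sense of \dref{lambdatrace}, in particular the $\lambda^{-1}\Psi_0(\lambda\partial_\tau\cdot)$ term in a $\lambda$-temporal slab of the patching surface must vanish---this follows from the transversality volume bound together with the a priori bound $|\partial_\tau s^\lambda|\le K\lambda^{-1}$ inherited from the $C^1$ cell minimizers, but you should say so. The paper's explicit mollification avoids both bookkeeping issues by computing the defect directly.
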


\begin{proof}[Proof of \tref{polyhedral-limsup}]
The proof is a direct adaptation of \cite{alberti1998non} until we reach the proof of (c) below,
which considers patching recovery sequences in neighboring domains.

\medskip

Call $\Gamma = \textup{Jump}(\bar{s}) \cup \{0\}\times \mathbb{T}^d \cup  \{T\}\times \mathbb{T}^d$.  Note that by definition $\Gamma$ is a $d$-dimensional closed polyhedral set. 

\medskip

For a given $\delta>0$, consider the class $\mathcal{A}$ of $d+1$-dimensional open polyhedral sets $A$ in $[0,T]\times \mathbb{T}^d$ with the following  properties:
\begin{enumerate}[label=(\roman*)]
\item  $\partial A$ and $\Gamma $ are transversal.
\item There is a sequence of functions $s^\lambda$ defined and continuous on $\overline{A}$ and a constant $K \geq 1$ (which may depend on $A$) so that
\begin{equation}\label{e.equivSprop}
 s^\lambda  = \bar{s} \ \hbox{ on } \{\xi\in \overline{A}: d(\xi, \Gamma )> K\lambda\}, \quad \lim_{\lambda\rightarrow^+0}\int_{(0,z)\in\overline{A}}|s^\lambda(0,z) - s_0(z)| dz =0,  \ \quad |\partial_ts|\leq K\lambda^{-1},
 \end{equation}
and
\[ \mathcal{G}^\lambda(s^\lambda;A) + \int_{(T,z)\in A}\Big[ s^\lambda(T,z)g(z) + \frac{1}{2\beta}\Phi\big(s^\lambda(T,z)\big)\Big]dz - \int_{
(0,z)\cap A} \frac{1}{2\beta}\Phi\big(s_0(z)\big)dz \leq \bar{V}(s_0,g,\bar{s};A)+\delta .\]
\end{enumerate}

Denote $A_1 \sqcup A_2$ to be the interior of $\overline{A_1} \cup \overline{A}_2$.  We prove that $[0,T]\times \mathbb{T}^d \in \mathcal{A}$ by the following inductive steps.

\begin{enumerate}[label = (\alph*)]
\item \label{item.empty} If $A$ is a $d+1$-dimensional polyhedral set in $\Omega$ such that $\mathcal{H}^d(\overline{A} \cap \Gamma) = 0$ then $A \in \mathcal{A}$.
\item \label{item.proj} Let $\Sigma$ be one of the following: a connected polyhedral subset of $\{T\}\times \mathbb{T}^d \setminus \textup{Jump}(g)$, a connected polyhedral subset of $\{0 \}\times \mathbb{T}^d \setminus [\textup{Jump}(\bar{s}) \cup \textup{Jump}(s_0)]$, or a face of $\textup{Jump}(\bar{s})$. Let $\pi$ be the projection map onto the affine subspace containing $\Sigma$.  Suppose that $A$ is an $d+1$-dimensional polyhedral set in $[0,T]\times \mathbb{T}^d$ so that $\Gamma \cap A = \Sigma$ and $\pi(A) = \Sigma$.  Then $A \in \mathcal{A}$.
\item \label{item.patch} If $A_1,A_2 \in \mathcal{A}$ are disjoint then $A_1\sqcup A_2 \in \mathcal{A}$.
\end{enumerate}

\begin{figure}
    \centering
    \begin{tikzpicture}[scale = 1]
\path[fill = gray!10] (-2,-1) -- (1,0) -- (2,-2) -- (-1,-2);
\path[fill = gray!55] (-2,-1) -- (1,0) -- (-2,0);
\path[fill = gray!30] (2,-2) -- (1,0) -- (2,0);
\draw (-1.6,-.4) node{$A_1$};
\draw (1.6,-.4) node{$A_3$};
\draw[dashed] (-2,-1)-- (1,0) -- (2,-2);
\draw[thick] (-2,0)--(2,0) node[anchor = west]{$t=T$};
\draw[thick] (-1,-2) node[anchor = north]{$\textup{Jump}(\bar{s})$} -- (1,0);
\node (B) at (1,-2.3) {$A_2$};
\end{tikzpicture}
    \caption{Example of a polyhedral decomposition so that each subregion is either of the type considered in \ref{item.empty} or in \ref{item.proj}.}
    \label{f.polyhedral-decomp}
\end{figure}
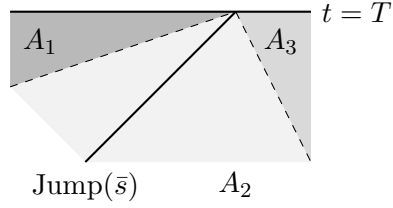
\medskip

 Since $\bar{s}$ is polyhedral we can write $[0,T]\times \mathbb{T}^d$ as a finite $\sqcup$ union of polyhedral subdomains satisfying the hypotheses of \ref{item.empty} or \ref{item.proj}, see \fref{polyhedral-decomp}. (For example do a Voronoi type decomposition, and then add regions of type \ref{item.empty} as necessary to achieve the projection hypothesis in \ref{item.proj}.)  In particular, even though constants $K$ in (ii) may increase by a finite factor at each union stage, there is no problem since there are only finitely many such unions.

\medskip

Note that once we have proven \ref{item.empty}-\ref{item.patch}, since $\delta>0$ was arbitrary, by a diagonal argument, we can find the recovery sequence $s^\lambda$.

\medskip

{\bf Proof of (a):} In this case, $\bar{s}$ is constant equal to either $\pm \mathfrak{s}$ in each connected component of $A$.  In this case, the recovery sequence is trivial $s^\lambda = \bar{s}$.  The nonlocal and Dirichlet parts of the energy are zero for constants, and the double-well potential is zero on $\pm \mathfrak{s}$ so
\[ \mathcal{G}^\lambda(s^\lambda;A) = 0,\]
$\mathcal{H}^d(\{0\} \times \T^d \cap \overline{A}) = 0$ so the initial data condition in \eref{equivSprop} is trivially satisfied, and
\[ \int_{(T,z)\in \overline{A}}\Big[ s^\lambda(T,z)g(z)+\frac{1}{2\beta}\Phi\big(s^\lambda(T,z)\big)\Big]dz = 0\]
because $\mathcal{H}^{d}(\{T\}\times \mathbb{T}^d \cap \overline{A}) = 0$.

\medskip

{\bf Proof of (b):}  We divide into cases depending whether the flat interface $\Sigma$ is in $\{0\}\times \mathbb{T}^d$, $\{T\}\times \mathbb{T}^d$ or is a face of $\textup{Jump}(\bar{s})$.

First suppose $\Sigma$ is a face of $\textup{Jump}(\bar{s})$.  Let $\nu$ be the (constant) inner space-time normal to the affine plane containing $\Sigma$.  From the definition of $\bar{L}(\nu)$, let $\square \in \blacksquare_\nu$ and $w$ be an element $\mathcal{X}_R(\square)$ (defined in \eqref{space_X}) with
\[ |\square|^{-1}\mathcal{F}^1(w; \square \times \R\, \nu) \leq \bar{L}(\nu) + \delta. \]
 We fix some $(\bar{\tau},\bar{z})\in \Sigma$ and let $s^\lambda(\tau, z) := w\big(\lambda^{-1}(\tau-\bar{\tau}),\lambda^{-1}(z-\bar{z})\big)$.  Note that since $w \in \mathcal{X}_R(\square)$ the property \eref{equivSprop} is satisfied with $K = R$.  Recall that $\mathcal{X}_R(\square)$ consists of $C^1$ functions so $|\partial_ts^\lambda| \leq K\lambda^{-1}$ increasing $K$ if necessary. The remainder of the argument is the same as \cite{alberti1998non}, the $\lambda \square$ period cells tile most of $\Sigma$ except for a $O(\lambda)$-neighborhood of $\partial \Sigma$ which has surface measure $O(\lambda)$ because $\Sigma$ is polyhedral.

Next suppose $\Sigma$ is a component of $\{T\}\times \mathbb{T}^d \setminus (\textup{Jump}(\bar{s}) \cup \textup{Jump}(g))$. We fix some $\bar{z}$ with $(T,\bar{z})\in \Sigma$, so then $\bar{s}$ takes a constant value either $\pm \mathfrak{s}$ on $A$, which we call $\bar{s}(A)$.  Also $g$ takes a constant value on $\Sigma$, $g(\bar{z})$.  Let $\nu$ now denote the normal-vector oriented in the negative time direction.  From the definition of $V^{end}$, we choose $\square\in \blacksquare_\nu$ and $w$ be an element $\mathcal{X}^{end}_R(\bar{s}(A), \square)$ (defined in \eqref{space_end}) with
\[ |\square|^{-1} \Big(\mathcal{F}^1(w;\square \times \R^+\, \nu) + \int_\square \Big[g(\bar{z})\, w(0,x)+ \frac{1}{2\beta}\Phi\big(w(0,x)\big)\Big]dx\Big) \leq V^{end}(\bar{s}(A),g(\bar{z})) + \delta. \]
Let $s^\lambda(\tau, z) := w\big( \lambda^{-1}(\tau-T),\lambda^{-1}(z- \bar{z})\big)$. As before we can conclude the compact support and time derivative bound properties of \eref{equivSprop} from the properties of the space $\mathcal{X}_R^{end}$. Using the projection condition $\pi(A) = \Sigma$ and tiling $\Sigma$ with $\lambda \square$ period cells, up to an $O(\lambda)$-error from the period cells intersecting $\partial \Sigma$ as before, we have
\begin{align*}
    \lim_{\lambda\rightarrow^+ 0}\left\{\mathcal{G}^\lambda(s^\lambda;A) + \int_{(T,z)\in A}\Big[ s^\lambda(T,z)g(z)+\frac{1}{2\beta}\Phi\big(s^\lambda(T,z)\big)\Big]dz\right\}  &\leq \int_{\Sigma}V^{end}\big(\bar{s}(T,z),g(\bar{z})\big) dz + \delta |\Sigma|.
\end{align*}

Finally suppose $\Sigma$ is a component of $\{0\}\times \mathbb{T}^d \setminus \textup{Jump}(\bar{s})$ so again $\bar{s}$ takes a constant value either $\pm \mathfrak{s}$ on $A$, call that value $\bar{s}(A)$.  Similarly, $s_0$ is constant on $\Sigma$ so we let $s_0(\Sigma)$ denote the value. From the definition of $V^{init}$, let $R>0$, $\nu$ be oriented in the positive time direction, $\square\in \blacksquare_\nu$ and $w$ be an element $\mathcal{X}^{init}_R(s_0(\Sigma),\bar{s}(A),\square)$ (defined in \eqref{space_init}) with
\[ |\square|^{-1}\mathcal{F}^1(w;\square \times \R^+\, \nu) - \frac{1}{2\beta}\Phi(s_0(\Sigma)) \leq V^{init}(s_0(\Sigma),\bar{s}(\Sigma)) + \delta. \]
We similarly fix some $(0,\bar{z})\in \Sigma$ and let $s^\lambda(\tau, z) := w\big(\lambda^{-1}\tau,\lambda^{-1}(z-\bar{z})\big)$, then proceed as in the previous cases to conclude.

\medskip

{\bf Proof of (c)}
This is the point where we need new arguments. Essentially the patching procedure of \pref{patchingestimate} is carried out again here, but with simpler boundary conditions we are able to make more explicit estimates.

Given disjoint sets $A_1,A_2 \in \mathcal{A}$ set $A := A_1 \sqcup A_2$ and ${\Delta} := \partial A_1 \cap \partial A_2$. Note that ${\Delta}$ is contained in a finite union of affine hyperplanes.  By assumption, there are sequences $s^\lambda_j$ defined, respectively, on $\overline{A}_j$ satisfying hypothesis (ii).

Define
\[ \tilde{s}^\lambda := \begin{cases} 
s^\lambda_1 & \hbox{in } \ A_1 \\
s^\lambda_2 & \hbox{in } \overline{A}_2 .
\end{cases}\]
We need to regularize $\tilde{s}^{\lambda}$ across the interface $\partial A_1 \cap \partial A_2$ at least in the time variable.  For given $r \geq \lambda$, let $\zeta: [0,T]\times\mathbb{T
}^d \to [0,1]$ a continuous cutoff function, which is $1$ in an $r$-neighborhood of $\partial A_1 \cap \partial A_2$ and zero outside of a $2r$-neighborhood with $|\grad_{\tau,z} \zeta| \leq r^{-1}$.  Let $\phi^\lambda = \lambda^{-(d+1)} \phi(\lambda^{-1}\cdot)$ be a standard mollifier at scale $\lambda$, and define
\[ s^\lambda := \zeta \phi^\lambda * \tilde{s}^\lambda + (1-\zeta)\tilde{s}^\lambda.\]

Note that, because $\tilde{s}^\lambda$ is only defined in $A_1 \sqcup A_2$ we mean technically
\[ \phi^\lambda * \tilde{s}^\lambda(\tau,z) = Z(\tau,z)^{-1}\int_{A_1 \sqcup A_2} \phi^\lambda(\tau-u, z-y) \tilde{s}^\lambda(u,y) \ dy\, du \]
with the normalization factor
\[Z(\tau, z) := \frac{1}{\int_{A_1 \sqcup A_2} \phi^\lambda(\tau-u,z-y)dydu}.\]
Since $A_1 \sqcup A_2$ is a polyhedral domain, $\inf_{A_1 \sqcup A_2} Z(x,t) \geq c > 0$ where the constant depends on the domain Lipschitz property.  Due to the hypothesis (ii) and its definition, the function $s^\lambda$ is continuous in $\overline{A}_1 \cup \overline{A}_2$ with 
\begin{equation}\label{e.convolutiondtbound}
|\partial_t s^\lambda| \leq C\lambda^{-1} + Cr^{-1}.
\end{equation}
As long as $r \geq \lambda$ this is bounded by $C\lambda^{-1}$.

By hypothesis (ii) we know
\[  \tilde{s}^\lambda  \equiv \bar{s} \ \hbox{ in } \ A_1 \sqcup A_2 \setminus \Gamma_{(K_1+K_2)\lambda}\]
and so we can conclude
\[ s^\lambda  \equiv s^{\lambda}_j \ \hbox{ in } \ A_j \setminus [\Gamma_{(K_1+K_2+1)\lambda} \cap {\Delta}_{2r}]. \]
The mollification converges uniformly away from the jump set, which also implies convergence in $L^1$ at the initial time. 
Call $K = K_1+K_2$.  Then we have shown that $s^\lambda$ satisfies \eref{equivSprop} with the constant $K$.  

Because ${\Delta}$ and $\Gamma$ are finite unions of $d$-dimensional polyhedral sets which meet transversally,
\begin{equation}\label{e.overlapsize}
 |\Gamma_{K\lambda} \cap {\Delta}_{2r}| \leq C \lambda r
 \end{equation}
for some constant $C$ depending on the sets but not on $\lambda$ or $r$.

Now we use this to compute the energy
\begin{align*}
    &\ \mathcal{G}^\lambda(s^\lambda; A_1 \sqcup A_2)+ \int_{\{T\}\times \mathbb{T}^d \cap (A_1 \sqcup A_2)}\Big[ s^\lambda(T,z)g(z)+\frac{1}{2\beta}\Phi\big(s^\lambda(T,z)\big)\Big]dz\\
    \leq&\  \mathcal{G}^\lambda(s_1^\lambda; A_1)+ \int_{\{T\}\times \mathbb{T}^d \cap A_1}\Big[ s_1^\lambda(T,z)g(z)+\frac{1}{2\beta}\Phi\big(s_1^\lambda(T,z)\big)\Big]dz\\
    &\ +\mathcal{G}^\lambda(s_2^\lambda;A_2)+ \int_{\{T\}\times \mathbb{T}^d \cap A_2}\Big[ s^\lambda_2(T,z)g(z)+\frac{1}{2\beta}\Phi\big(s_2^\lambda(T,z)\big)\Big]dz\\
    &\ +\mathcal{G}^\lambda(s^\lambda;{\Delta}_{2r} \cap \Gamma_{K\lambda}) + \mathcal{N}^\lambda(s^\lambda;A_1,A_2).
\end{align*} 
We estimate the energy in the overlap region using \eref{overlapsize}.  The double-well term is immediate using $W_\beta([-\mathfrak{s},\mathfrak{s}]) \leq W_\beta(0)$
\[\int_{{\Delta}_{2r} \cap \Gamma_{K\lambda}} \frac{1}{\lambda}\mathcal{W}_\beta(s^\lambda) \ d\tau\, dz \leq C r.\]
The derivative term is estimated using \eref{convolutiondtbound} and \eref{overlapsize}
\[ \int_{{\Delta}_{2r} \cap \Gamma_{K\lambda}} \frac{1}{\lambda}\Psi_0(\lambda\, \partial_\tau s^\lambda) d\tau\, dz \leq Cr.\]
The nonlocal part of the energy is bounded similarly by \eref{overlapsize} 
\[ N^\lambda(s^\lambda; {\Delta}_{2r} \cap \Gamma_{K\lambda}, {\Delta}_{2r} \cap \Gamma_{K\lambda}) \leq C r \]
using the simple inequality
\[ \mathcal{N}^\lambda(s,A,A) \leq C\lambda^{-1}|A|. \]
Finally for the nonlocal cross term we use Lemma \ref{lem:localization} to find
\[ \limsup_{\lambda \to 0} \mathcal{N}^{\lambda}(s^\lambda;A_1,A_2) =0.\]

Combining the above we find
\begin{align*}
     &\ \limsup_{\lambda\rightarrow^+0}\Big\{\mathcal{G}^\lambda(s^\lambda; A_1 \sqcup A_2)+ \int_{\{T\}\times \mathbb{T}^d \cap (A_1 \sqcup A_2)}\Big[ s^\lambda(T,z)g(z)+\frac{1}{2\beta}\Phi\big(s^\lambda(T,z)\big)\Big]dz\Big\}\\
 \leq&\ \bar{V}(s_0,g,\bar{s};A_1)+\bar{V}(s_0,g,\bar{s};A_2) + Cr.
    \end{align*}
The transversality condition used again implies
\begin{align*}\limsup_{\lambda \to^+ 0} \Big\{\mathcal{G}^\lambda(s^\lambda; A_1 \sqcup A_2)+ \int_{\{T\}\times \mathbb{T}^d \cap (A_1 \sqcup A_2)}\Big[ s^\lambda(T,z)g(z)+\frac{1}{2\beta}\Phi\big(s^\lambda(T,z)\big)\Big]dz\Big\}\leq\bar{V}(s_0,g,\bar{s};A_1 \sqcup A_2)+ Cr.
\end{align*}
We can also choose $r \to 0$ as $\lambda \to 0$ to get (ii), actually $r = \lambda$ works.  

While the construction may not satisfy $|s^\lambda|\leq \mathfrak{s}$, we may apply Lemma \ref{lem:bdd} to find an asymptotically equivalent sequence that satisfies this property.
\end{proof}

From \tref{polyhedral-limsup} we can conclude the proof of Theorem \ref{thm:main} part (ii) by the density of polyhedral sets/functions and the Reshetnyak continuity theorem.  We just need to establish the upper-semicontinuity of the surface energy density $\bar{L}(\nu)$.

\begin{proposition}\label{prop:upper-semicontinuity}
The maps $\bar{L}:S^d \mapsto \R$ and $V^{init}(\cdot,\bar{s}), V^{end}(\bar{s},\cdot): (-1,1) \to \R$ with $\bar{s} \in \{-\mathfrak{s}, \mathfrak{s}\}$  are upper-semicontinuous.
\end{proposition}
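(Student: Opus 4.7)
Plan. The strategy is to reduce to the finite-$R$ approximants $\bar{L}_R$, $V^{init}_R$, $V^{end}_R$. Each is nonincreasing in $R$ because enlarging $R$ relaxes the cutoff conditions \eqref{e.cutoff-cond}, \eqref{e.cutoff-cond-init}, \eqref{e.cutoff-cond-end}; thus $\bar{L}(\nu)=\inf_R \bar{L}_R(\nu)$ and similarly for $V^{init}$, $V^{end}$. Since the pointwise infimum of any family of upper-semicontinuous functions is itself upper-semicontinuous (for any $\epsilon>0$, a nearly minimizing member dominates the infimum up to $\epsilon$, so one can pass to $\limsup$), it suffices to prove USC of each of $\bar{L}_R$, $V^{init}_R(\cdot,\bar{s})$, $V^{end}_R(\bar{s},\cdot)$ at fixed $R$.

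The cost $V^{end}_R(\bar{s},g)$ depends on $g$ only through the linear boundary term $|\square|^{-1}\int_\square g\cdot s(0,x)\,dx$ in the definition of $\mathcal{X}_R^{end}$. Given $g_n\to g$, reusing a near-optimal $w$ as a competitor yields $V^{end}_R(\bar{s},g_n)-V^{end}_R(\bar{s},g)\le |g_n-g|$, so $V^{end}_R(\bar{s},\cdot)$ is in fact continuous in $g$. For $V^{init}_R(\cdot,\bar{s})$ I would use the one-dimensional reduction of Theorem~\ref{thm:characterization} to work with scalar paths $\tilde{s}:[0,R]\to(-1,1)$ with $\tilde{s}(0)=s_0$, $\tilde{s}(R)=\bar{s}$. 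Given $s_0^n\to s_0\in(-1,1)$, take a near-optimal $\tilde{s}$ at $s_0$ and prepend a linear ramp from $s_0^n$ to $s_0$ over $[0,\delta_n]$ with $\delta_n=|s_0-s_0^n|^{1/2}$. Using boundedness of $\mathcal{W}_\beta$ on compact subsets of $(-1,1)$ and the near-zero quadratic bound $\Psi(S,V)\le C|V|^2$ guaranteed by Proposition~\ref{p.W_decomposition}, the added cost is $O(\delta_n)+O(|s_0-s_0^n|^2/\delta_n)\to 0$. Continuity of $\Phi$ then handles the $-\tfrac{1}{2\beta}\Phi(s_0)$ correction.

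The main obstacle is USC of $\bar{L}_R$ in $\nu$. Given $\nu_n\to\nu$ in $S^d$ and $\epsilon>0$, pick $\square\in\blacksquare_\nu$ and near-optimal $w\in\mathcal{X}_R(\square)$. Choose rotations $O_n\in SO(d+1)$ with $O_n\nu=\nu_n$ and $O_n\to I$, set $\square_n:=O_n(\square)\in\blacksquare_{\nu_n}$, and consider the rotated competitor $w^n(\xi):=w(O_n^{-1}\xi)$. A direct check using $\xi\cdot\nu_n = O_n^{-1}\xi\cdot\nu$ and the transformation of periodicity axes shows $w^n\in\mathcal{X}_R(\square_n)$. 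However $\mathcal{F}^1$ is not fully rotation-invariant for two reasons: the time coordinate is distinguished through $\partial_\tau$ in $\Psi$, and $J$ is not assumed isotropic. I would first smooth $w$ by a small temporal mollification (permissible because the convex $\Psi$-term only decreases under mollification by Jensen's inequality, while the smoothing error in $\mathcal{W}_\beta$ and the nonlocal term can be made small). The chain rule then gives $\partial_\tau w^n(\xi)=\nabla w(O_n^{-1}\xi)\cdot(O_n^{-1})_{\cdot,\tau}\to\partial_\tau w(\xi)$ uniformly on the compact set where $\nabla w$ is nontrivial (inside a $2R$-slab around $\nu^\perp$), so the local part of $|\square_n|^{-1}\mathcal{F}^1(w^n;\square_n\times\mathbb{R}\nu_n)$ converges to $|\square|^{-1}$ times the local part at $w$ by continuity of $\Psi$ and $\mathcal{W}_\beta$. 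For the nonlocal part, the pointwise convergence $w^n\to w$, the uniform Lipschitz bound $|w^n(\tau,z)-w^n(\tau,z')|\le C|z-z'|$ (from smoothness of $w$), and the first-moment assumption~\ref{a.Jmoment} together allow dominated convergence in $(\tau,z,z')$, giving convergence of the nonlocal part as well. Chaining these yields $\limsup_n \bar{L}_R(\nu_n)\le \bar{L}_R(\nu)+\epsilon$, which suffices since $\epsilon$ is arbitrary. Note that a subtlety not worth dwelling on here is that spatial slices of $\square_n\times\mathbb{R}\nu_n$ are translates of spatial slices of $\square\times\mathbb{R}\nu$ only when $O_n$ is a purely spatial rotation; in general the integration domain also varies with $n$, but this only contributes an $o(1)$-perturbation since the $w\equiv\pm\mathfrak{s}$ region makes no nonlocal contribution.
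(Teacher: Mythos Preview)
Your approach is essentially the same as the paper's: both represent $\bar{L}(\nu)$ as an infimum over a family of functions continuous in $\nu$ (via rotation of a fixed competitor), handle $V^{end}$ by reusing a competitor with the perturbed $g$, and handle $V^{init}$ by prepending a short linear ramp. The paper is terser—it simply asserts that $O\mapsto |\square|^{-1}\mathcal{F}^1(s\circ O;\square\times\R O\nu_0)$ is continuous for fixed $s\in\mathcal{X}_R(\square)$ and $\square$—while you spell out the convergence of the local and nonlocal pieces separately.

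One simplification worth noting: your temporal mollification step is unnecessary, since competitors $w\in\mathcal{X}_R(\square)$ are $C^1$ by definition, so $\nabla w$ is already continuous and compactly supported in the $2R$-slab; the chain rule and dominated convergence apply directly. Also, your ramp construction for $V^{init}$ technically produces a competitor in $\mathcal{X}^{init}_{R+\delta_n}$ rather than $\mathcal{X}^{init}_R$ (the paper's construction has the same feature), but this is harmless because monotonicity in $R$ gives $V^{init}(s_0^n,\bar{s})\le V^{init}_{R+\delta_n}(s_0^n,\bar{s})\le V^{init}_R(s_0,\bar{s})+o(1)$, and then one sends $R\to\infty$.
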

\begin{proof}
    Given a base direction $\nu_0 \in S^d$ there is a mapping $I : \nu\in S^d \to SO(d)$ such that $I(\nu) \nu_0=\nu$.  Note that for any $d$-dimensional periodic cube $\Box$ in the orthogonal complement of $\nu_0$, the map  $O \in SO(d)\mapsto |\Box|^{-1}\mathcal{F}^1(s\circ O;\Box \times \R O\nu_0)$ is continuous for any fixed  $s \in \mathcal{X}_R(\Box)$.
    Thus the formula 
    $$
        \bar{L}(\nu) = \liminf_{R\to\infty}\big[\inf\{|\Box|^{-1}\mathcal{F}^1(s\circ I(\nu);\square \times \R\, \nu); \square\in \blacksquare_{\nu},\  s\in \mathcal{X}_R(\square) \}\big]
    $$
    represents $\bar{L}$ as an infimum of continuous functions of $\nu \in S^d$, making it upper-semicontinuous.
    
    The argument for $V^{end}$ is immediate using continuity of the integral for the terminal cost evaluation.
    
    To prove upper-semicontinuity of $V^{end}(\bar{s}, \cdot)$, we can simply consider a linear extension and compare costs. For instance, fix $s_{0,1}$, $s_{0,2}$, $R$ and $\square$.  For $s_1\in \mathcal{X}_R^{init}(s_{0,1},\bar{s},\square)$ we can define $s_2\in \mathcal{X}_R^{init}(s_{0,2},\bar{s},\square)$ by
    $$
        s_2(t,x)=\begin{cases}
            (|s_{0,2}-s_{0,1}|-t)\frac{s_{0,2}}{|s_{0,2}-s_{0,1}|}+t\frac{s_{0,1}}{|s_{0,2}-s_{0,1}|} & t <|s_{0,2}-s_{0,1}|\\
            s_1(t-|s_{0,2}-s_{0,1}|,x) & t\geq|s_{0,2}-s_{0,1}|.
        \end{cases}
    $$
    The cost is continuous with respect to $s_{0,2}$, making $V^{init}$ upper-semicontinuous when we take the infimum over $R$ and $\square$.
\end{proof}

\begin{proof}[Proof of Theorem \ref{thm:main} Part (ii)]
    Let $\bar{s} \in BV((0,T)\times \mathbb{T}^d; \{\pm \mathfrak{s}\})$, $s_0 \in L^1(\T^d;(-1,1))$, and $g\in L^1(\T^d; \R)$ be general, not necessarily polyhedral, data.  As a consequence of Theorem 1.24 in \cite{giusti1984minimal} (and approximation of smooth functions by polyhedral ones), there are sequences of polyhedral functions $\bar{s}^n$, $s^n_0$, and $g^n$ in the same spaces and so that
    \[(\bar{s}^n,s^n_0,g^n) \to (\bar{s},s_0,g) \ \hbox{ in $L^1$ norm,} \]
    and also
    \[D\bar{s}^n \overset{\ast}{\rightharpoonup} D\bar{s} \ \hbox{ and } \ |D\bar{s}^n| \overset{\ast}{\rightharpoonup} |D\bar{s}| \]
    in duality with continuous functions.  Furthermore, this convergence implies that
    $$
        (\bar{s}^n(0,\cdot), \bar{s}^n(T,\cdot)) \to (\bar{s}^n(0,\cdot), \bar{s}^n(T,\cdot)) \ \hbox{ in $L^1$ norm,}
    $$
    as shown in Theorem 2.2 of \cite{farah2020proving}. By Theorem \ref{t.polyhedral-limsup}, for each $n$ there are functions $s^{\lambda,n}$ with $\lim_{\lambda\rightarrow^+0} \|s^{\lambda,n}(0,\cdot)-s_0^n\|_{L^1(\T^d)}=0$, $\lim_{\lambda\rightarrow^+0} \|s^{\lambda,n}-\bar{s}\|_{(L^1((0,T)\times\T^d)} = 0$, and
    $$
\limsup_{\lambda \to^+ 0} \Big\{\mathcal{G}^\lambda(s^{\lambda,n}; (0,T)\times\mathbb{T}^d\big)d\tau + \int_{\mathbb{T}^d}\Big[ {s}^{\lambda,n}(T,z){g}(z)+\frac{1}{2\beta}\Phi\big(s^{\lambda,n}(T,z)\big)\Big]dz - \int_{\mathbb{T}^d}\frac{1}{2\beta}\Phi\big(s_0^n(z)\big)dz \Big\}\leq \bar{V}(s_0^n,g^n,\bar{s}^n).
    $$
    
    We may now consider a sequence $s^{\lambda_n,n}$ such that $\lim_{n\rightarrow \infty} \lambda_n = 0$, and we will adjust the initial condition of $s^{\lambda_n,n}$ so that it agrees with $s_0$.  This may be done by the linear interpolation
    $$
        \tilde{s}^n(\tau,z) = \max\{1 - \frac{\tau}{\lambda}, 0\} s_0(z) +  \min\{\frac{\tau}{\lambda}, 1\} s^{\lambda_n,n}(\tau,z).
    $$
    Arguing as in the proof of \pref{patchingestimate}, seeing that $\lim_{n\rightarrow \infty}\|s^{\lambda_n,n}(0,\cdot)-s_0\|_{L^1(\T^d)} =0$, we have
    \begin{align*}
&\ \limsup_{n\rightarrow \infty} \Big\{\mathcal{G}^\lambda(\tilde{s}^{n}; (0,T)\times\mathbb{T}^d\big)d\tau + \int_{\mathbb{T}^d}\Big[\tilde {s}^{n}(T,z){g}(z)+\frac{1}{2\beta}\Phi\big(\tilde{s}^n(T,z)\big)\Big]dz - \int_{\mathbb{T}^d}\frac{1}{2\beta}\Phi\big(s_0(z)\big)dz \Big\}\\
\leq&\  
\limsup_{n\rightarrow \infty} \Big\{\mathcal{G}^\lambda(s^{\lambda_n,n}; (0,T)\times\mathbb{T}^d\big)d\tau + \int_{\mathbb{T}^d}\Big[ {s}^{\lambda_n,n}(T,z){g}(z)+\frac{1}{2\beta}\Phi\big(s^{\lambda_n,n}(T,z)\big)\Big]dz - \int_{\mathbb{T}^d}\frac{1}{2\beta}\Phi\big(s_0^n(z)\big)dz \Big\}.
    \end{align*}
    
    We may now conclude upper-semicontinuity of the limit
    \begin{align*}
    \limsup_{n\rightarrow \infty} \bar{V}(s_0^n,g^n,\bar{s}^n) \leq \bar{V}(s_0,g,\bar{s}).
    \end{align*}
    Using Proposition \ref{prop:upper-semicontinuity}, $L^1$ convergence of $s_0^n$ and $g^n$ at the initial and final times with Fatou's lemma and Egorov's theorem we have the upper-semicontinuous limit for $V^{init}$ and $V^{end}$.  Again using Proposition \ref{prop:upper-semicontinuity}, and the well-known result of Reshetnyak that weak convergence of $\bar{s}^n$ in BV combined with convergence of the perimeter implies upper-semicontinuity of the surface area functional (see Theorem 1.3 of \cite{spector2011simple}). 
\end{proof}

\bibliography{IsingBib}

\begin{thebibliography}{10}

\bibitem{alberti1998non}
Giovanni Alberti and Giovanni Bellettini.
\newblock A non-local anisotropic model for phase transitions: asymptotic
  behaviour of rescaled energies.
\newblock {\em European Journal of Applied Mathematics}, 9(3):261--284, 1998.

\bibitem{alberti98annalen}
Giovanni Alberti and Giovanni Bellettini.
\newblock A nonlocal anisotropic model for phase transitions.
\newblock {\em Mathematische Annalen}, 310(3):527--560, 1998.

\bibitem{alberti1996surface}
Giovanni Alberti, Giovanni Bellettini, Marzio Cassandro, and Errico Presutti.
\newblock Surface tension in {I}sing systems with {K}ac potentials.
\newblock {\em Journal of statistical physics}, 82(3):743--796, 1996.

\bibitem{bayraktar2018analysis}
Erhan Bayraktar and Asaf Cohen.
\newblock Analysis of a finite state many player game using its master
  equation.
\newblock {\em SIAM Journal on Control and Optimization}, 56(5):3538--3568,
  2018.

\bibitem{bodineau1999wulff}
Thierry Bodineau.
\newblock The {W}ulff construction in three and more dimensions.
\newblock {\em Communications in mathematical physics}, 207(1):197--229, 1999.

\bibitem{bouchitte1990singular}
Guy Bouchitt{\'e}.
\newblock Singular perturbations of variational problems arising from a
  two-phase transition model.
\newblock {\em Applied Mathematics and Optimization}, 21(1):289--314, 1990.

\bibitem{bressan2021optimal}
Alberto Bressan, Maria~Teresa Chiri, and Najmeh Salehi.
\newblock On the optimal control of propagation fronts.
\newblock {\em arXiv preprint arXiv:2108.09321}, 2021.

\bibitem{bressan2021optimalmoving}
Alberto Bressan, Maria~Teresa Chiri, and Najmeh Salehi.
\newblock Optimal control of moving sets, 2021.

\bibitem{cardaliaguet2019master}
Pierre Cardaliaguet, Fran{\c{c}}ois Delarue, Jean-Michel Lasry, and
  Pierre-Louis Lions.
\newblock {\em The master equation and the convergence problem in mean field
  games:(ams-201)}.
\newblock Princeton University Press, 2019.

\bibitem{carmona2018probabilistic}
Ren{\'e} Carmona, Fran{\c{c}}ois Delarue, et~al.
\newblock {\em Probabilistic Theory of Mean Field Games with Applications
  I-II}.
\newblock Springer, 2018.

\bibitem{cavagna2010scale}
Andrea Cavagna, Alessio Cimarelli, Irene Giardina, Giorgio Parisi, Raffaele
  Santagati, Fabio Stefanini, and Massimiliano Viale.
\newblock Scale-free correlations in starling flocks.
\newblock {\em Proceedings of the National Academy of Sciences},
  107(26):11865--11870, 2010.

\bibitem{cecchin2019convergence}
Alekos Cecchin and Guglielmo Pelino.
\newblock Convergence, fluctuations and large deviations for finite state mean
  field games via the master equation.
\newblock {\em Stochastic Processes and their Applications},
  129(11):4510--4555, 2019.

\bibitem{chialvo2010emergent}
Dante~R Chialvo.
\newblock Emergent complex neural dynamics.
\newblock {\em Nature physics}, 6(10):744--750, 2010.

\bibitem{collet2016rhythmic}
Francesca Collet, Marco Formentin, and Daniele Tovazzi.
\newblock Rhythmic behavior in a two-population mean-field {I}sing model.
\newblock {\em Physical Review E}, 94(4):042139, 2016.

\bibitem{conti2002gamma}
Sergio Conti, Irene Fonseca, and Giovanni Leoni.
\newblock A {$\Gamma$}-convergence result for the two-gradient theory of phase
  transitions.
\newblock {\em Communications on pure and applied mathematics}, 55(7):857--936,
  2002.

\bibitem{de1994glauber}
Anna De~Masi, Enza Orlandi, Errico Presutti, and Livio Triolo.
\newblock {G}lauber evolution with {K}ac potentials. {I}. mesoscopic and
  macroscopic limits, interface dynamics.
\newblock {\em Nonlinearity}, 7(3):633, 1994.

\bibitem{de1996glauber}
Anna De~Masi, Enza Orlandi, Errico Presutti, and Livio Triolo.
\newblock {G}lauber evolution with {K}ac potentials: {III}. spinodal
  decomposition.
\newblock {\em Nonlinearity}, 9(1):53, 1996.

\bibitem{delarue2019master}
Fran{\c{c}}ois Delarue, Daniel Lacker, Kavita Ramanan, et~al.
\newblock From the master equation to mean field game limit theory: A central
  limit theorem.
\newblock {\em Electronic Journal of Probability}, 24, 2019.

\bibitem{EG}
Lawrence~C Evans and Ronald~F Gariepy.
\newblock Blowup, compactness and partial regularity in the calculus of
  variations.
\newblock {\em Indiana University mathematics journal}, 36(2):361--371, 1987.

\bibitem{evans2018measure}
Lawrence~C Evans and Ronald~F Gariepy.
\newblock {\em Measure theory and fine properties of functions}.
\newblock Routledge, 2018.

\bibitem{evans1992phase}
Lawrence~C Evans, H~Mete Soner, and Panagiotis~E Souganidis.
\newblock Phase transitions and generalized motion by mean curvature.
\newblock {\em Communications on Pure and Applied Mathematics},
  45(9):1097--1123, 1992.

\bibitem{farah2020proving}
Antonio Farah.
\newblock Proving the regularity of the reduced boundary of perimeter
  minimizing sets with the {D}e {G}iorgi lemma.
\newblock University of Texas at Austin, 2020.
\newblock Bachelor's Thesis.

\bibitem{Fonseca1993}
Irene Fonseca and Stefan M{\"u}ller.
\newblock Relaxation of quasiconvex functional in {$BV(\Omega,\R^p)$} for
  integrands {$f(x, u,\grad u)$}.
\newblock {\em Archive for Rational Mechanics and Analysis}, 123(1):1--49, Mar
  1993.

\bibitem{ghaderi2013opinion}
Javad Ghaderi and R~Srikant.
\newblock Opinion dynamics in social networks: A local interaction game with
  stubborn agents.
\newblock In {\em 2013 American control conference}, pages 1982--1987. IEEE,
  2013.

\bibitem{giusti1984minimal}
Enrico Giusti and Graham~Hale Williams.
\newblock {\em Minimal surfaces and functions of bounded variation}, volume~80.
\newblock Springer, 1984.

\bibitem{gomes2013continuous}
Diogo~A Gomes, Joana Mohr, and Rafael~Rigao Souza.
\newblock Continuous time finite state mean field games.
\newblock {\em Applied Mathematics \& Optimization}, 68(1):99--143, 2013.

\bibitem{grover2018mean}
Piyush Grover, Kaivalya Bakshi, and Evangelos~A Theodorou.
\newblock A mean-field game model for homogeneous flocking.
\newblock {\em Chaos: An Interdisciplinary Journal of Nonlinear Science},
  28(6):061103, 2018.

\bibitem{hopfield1982neural}
John~J Hopfield.
\newblock Neural networks and physical systems with emergent collective
  computational abilities.
\newblock {\em Proceedings of the national academy of sciences},
  79(8):2554--2558, 1982.

\bibitem{horst2010dynamic}
Ulrich Horst.
\newblock Dynamic systems of social interactions.
\newblock {\em Journal of economic behavior \& organization}, 73(2):158--170,
  2010.

\bibitem{horst2006equilibria}
Ulrich Horst and Jose~A Scheinkman.
\newblock Equilibria in systems of social interactions.
\newblock {\em Journal of Economic Theory}, 130(1):44--77, 2006.

\bibitem{katsoulakis1995generalized}
Markos~A Katsoulakis and Panagiotis~E Souganidis.
\newblock Generalized motion by mean curvature as a macroscopic limit of
  stochastic {I}sing models with long range interactions and {G}lauber
  dynamics.
\newblock {\em Communications in mathematical physics}, 169(1):61--97, 1995.

\bibitem{Doebeli}
Timothy Killingback and Michael Doebeli.
\newblock Spatial evolutionary game theory: Hawks and doves revisited.
\newblock {\em Proceedings: Biological Sciences}, 263(1374):1135--1144, 1996.

\bibitem{kolokoltsov2016mean}
Vassili~N Kolokoltsov and Alain Bensoussan.
\newblock Mean-field-game model for botnet defense in cyber-security.
\newblock {\em Applied Mathematics \& Optimization}, 74(3):669--692, 2016.

\bibitem{lacker2020convergence}
Daniel Lacker.
\newblock On the convergence of closed-loop nash equilibria to the mean field
  game limit.
\newblock {\em The Annals of Applied Probability}, 30(4):1693--1761, 2020.

\bibitem{leonidov2021ising}
Andrey Leonidov, Alexey Savvateev, and Andrew~G Semenov.
\newblock Ising game on graphs.
\newblock {\em arXiv preprint arXiv:2108.00824}, 2021.

\bibitem{Masi_1996}
A~De Masi, E~Orlandi, E~Presutti, and L~Triolo.
\newblock {G}lauber evolution with {K}ac potentials: {II}. fluctuations.
\newblock {\em Nonlinearity}, 9(1):27--51, jan 1996.

\bibitem{modica1987gradient}
Luciano Modica.
\newblock The gradient theory of phase transitions and the minimal interface
  criterion.
\newblock {\em Archive for Rational Mechanics and Analysis}, 98(2):123--142,
  1987.

\bibitem{mora2011biological}
Thierry Mora and William Bialek.
\newblock Are biological systems poised at criticality?
\newblock {\em Journal of Statistical Physics}, 144(2):268--302, 2011.

\bibitem{morgan1991cone}
Frank Morgan.
\newblock The cone over the {C}lifford torus in {$\mathbb{R}^4$} is
  {$\Phi$}-minimizing.
\newblock {\em Mathematische Annalen}, 289(1):341--354, 1991.

\bibitem{sandier2004gamma}
Etienne Sandier and Sylvia Serfaty.
\newblock Gamma-convergence of gradient flows with applications to
  {G}inzburg-{L}andau.
\newblock {\em Communications on Pure and Applied Mathematics: A Journal Issued
  by the Courant Institute of Mathematical Sciences}, 57(12):1627--1672, 2004.

\bibitem{simons1968minimal}
James Simons.
\newblock Minimal varieties in {R}iemannian manifolds.
\newblock {\em Annals of Mathematics}, pages 62--105, 1968.

\bibitem{spector2011simple}
Daniel Spector.
\newblock Simple proofs of some results of reshetnyak.
\newblock {\em Proceedings of the American Mathematical Society}, pages
  1681--1690, 2011.

\bibitem{pmlr-v97-tan19a}
Mingxing Tan and Quoc Le.
\newblock {E}fficient{N}et: Rethinking model scaling for convolutional neural
  networks.
\newblock In Kamalika Chaudhuri and Ruslan Salakhutdinov, editors, {\em
  Proceedings of the 36th International Conference on Machine Learning},
  volume~97 of {\em Proceedings of Machine Learning Research}, pages
  6105--6114. PMLR, 09--15 Jun 2019.

\bibitem{vanni2011criticality}
Fabio Vanni, Mirko Lukovi{\'c}, and Paolo Grigolini.
\newblock Criticality and transmission of information in a swarm of cooperative
  units.
\newblock {\em Physical review letters}, 107(7):078103, 2011.

\bibitem{von1947theory}
John Von~Neumann and Oskar Morgenstern.
\newblock {\em Theory of games and economic behavior, 2nd rev}.
\newblock Princeton university press, 1947.

\bibitem{xu2015robust}
Yinliang Xu, Zaiyue Yang, Wei Gu, Ming Li, and Zicong Deng.
\newblock Robust real-time distributed optimal control based energy management
  in a smart grid.
\newblock {\em IEEE Transactions on Smart Grid}, 8(4):1568--1579, 2015.

\end{thebibliography}
\bibliographystyle{plain}

\end{document}